      \string\usetikzlibrary{decorations.markings} to use arrows with markings}{}}{}%
\newcommand{\Gal}{\mathrm{Gal}}
\newcommand{\Hom}{\mathrm{Hom}}
\newcommand{\cd}{\mathop{cd}\nolimits}
\newcommand{\m}{\mathfrak{m}}
\newcommand{\Z}{\mathds Z}
\newcommand{\Q}{\mathds Q}
\newcommand{\F}{\mathds F}
\newcommand{\GG}{\mathbb{G}}
\newcommand{\et}{\textit{\'et}}
\newcommand{\tr}{\mathrm{tr}}
\newcommand{\Mod}{\mathrm{Mod}}
\newcommand{\Ext}{\mathrm{Ext}}
\renewcommand{\P}{{\mathbb P}}
\newcommand{\cO}{{\mathscr O}}
\newcommand{\liso}{\mathrel{\hbox{$\longrightarrow$} \kern-2.4ex\lower-1ex\hbox{$\scriptstyle\sim$}\kern1.7ex}}
\newcommand{\set}{\textit{s\'et}}
\newcommand{\zerounderset}[3][\mathord]{%
  #1{\vtop{
    \let\\\cr
    \baselineskip\z@skip\lineskip.25ex
    \ialign{\hidewidth$##$\hidewidth\crcr
      \omit$#3$\cr
      #2\crcr
    }%
  }}%
}
\newtheoremstyle{alexthm}
  {}
  {}
  {\sl }
  {}
  {\bf}
  {.}
  {.5em}
  {}
\theoremstyle{alexthm}
\newtheorem{theorem}{Theorem}[section]
\newtheorem*{theorem*}{Theorem}
\newtheorem{corollary}[theorem]{Corollary}
\newtheorem{proposition}[theorem]{Proposition}
\newtheorem{lemma}[theorem]{Lemma}
\newtheorem*{lemma*}{Lemma}
\theoremstyle{remark}
\newtheoremstyle{alexdef}
  {}
  {}
  {\rm }
  {}
  {\bf}
  {.}
  {.5em}
  {}
\theoremstyle{alexdef}
\newtheorem*{example*}{Example}
\newtheorem{remark}[theorem]{Remark}
\newtheorem{definition}[theorem]{Definition}
\DeclareMathOperator{\Spec}{\textit{Spec}}
\DeclareMathOperator{\Spa}{\textit{Spa}}
\DeclareMathOperator{\supp}{\textrm{Supp}}
\DeclareMathOperator{\ch}{char}
\DeclareMathOperator{\Shv}{\mathrm{Shv}}
\definecolor{darklimegreen}{RGB}{31,142,8}
\title{Tame and strongly \'etale cohomology of curves}
\author{Katharina H\"{u}bner}
\date{}
\begin{document}

\begin{abstract}
 For a curve~$C$ over a perfect field~$k$ of characteristic $p > 0$ we study the tame cohomology of $X = \Spa(C,k)$ introduced in \cite{HueAd}.
 We prove that the tame cohomology groups of~$X$ with $p$-torsion coefficients satisfy cohomological purity (which is not true in full generality for the \'etale cohomology).
 Using purity we show Poincar\'e duality for the tame cohomology of~$X$ with $p$-torsion coefficients.
\end{abstract}

\maketitle

\section{Introduction}

The tame cohomology of an $S$-scheme $X$ is  defined as the tame cohomology (introduced in \cite{HueAd}) of its associated discretely ringed adic space $\Spa(X,S)$.
For torsion coefficients prime to $\ch(S)$, it coincides with \'{e}tale cohomology, see \cite{HueAd}, \S 7.
For $p$-torsion coefficients in characteristic~$p$, however, tame cohomology is expected to be better behaved than \'etale cohomology.
For instance, smooth base change, cohomological purity, and homotopy invariance should hold for tame cohomology with $p$-torsion coefficients.
Assuming resolution of singularities, a special case of cohomological purity was already proved in \cite{HueAd}, Corollary~13.5.
A consequence of purity is homotopy invariance (\cite{HueAd}, Corollary~13.6).

Let us first explain what form cohomological purity is expected to take in characteristic~$p$.
The role played by the roots of unity for invertible coefficients is now taken by the logarithmic de Rham Witt sheaves.
For simplicity we restrict to sheaves of $\F_p$-modules (instead of $\Z/p^n\Z$ for $n \ge 1$).
The logarithmic de Rham sheaf~$\nu(r)$ for $r \ge 0$ on a scheme~$X$ smooth over a perfect field~$k$ of characteristic $p > 0$ is defined by the short exact sequence
\[
 0 \to \nu(r) \to \Omega^r_{X/k,d=0} \overset{C-1}{\longrightarrow} \Omega^r_{X/k} \to 0,
\]
where~$C$ denotes the Cartier operator (see \cite{Milne76}, Lemma~1.1) and ``$d=0$'' stands for closed forms.
It is a sheaf of $\F_p$-modules and equals $\F_p$ in case $r=0$.
For $r<0$ we set $\nu(r) = 0$.
Then cohomological purity for a smooth closed subscheme~$Z$ of~$X$ of codimension~$c$ is expected to take the form
\[
 H^{n-c}(Z,\nu(r-c)) \overset{\sim}{\longrightarrow} H^n_Z(X,\nu(r)),
\]
where~``$H$'' stands for an appropriate cohomology theory (see \cite{Milne86}, \S 2).
In \'etale cohomology the above statement is true in general only for $r \ge c$ (see \cite{Milne86}, Corollary~2.2 and Remark~2.4 and \cite{Mos99}, \S 2).
If~``$H$'' stands for tame cohomology, the statement is expected to hold for all~$r$.
In case $r=0$ this has been shown in \cite{HueAd}, Corollary~13.5 under the assumption of resolution of singularities.
Note that in this case the assertion does not involve logarithmic de Rham sheaves.
It just says that $H^n_Z(X,\F_p) = 0$ for all~$n$.

In order to show cohomological purity in general, it would be necessary to develop more machinery such as a theory of logarithmic deRham-Witt complexes in this setting.
This is a task to be tackled in the long run.
For curves, however, it is possible to prove purity for the tame cohomology without any further tools.
We fix a smooth curve~$C$ over a perfect field~$k$ and set $X = \Spa(C,k)$.
We denote the tame site of~$X$ by~$X_t$.
Then the result is the following (see \cref{purity-sec}:

\begin{theorem}
 Let $c \in C$ be a closed point.
 Set $x = \Spa(c,c)$, such that we have a closed immersion $i:x \to X$.
 Then
 \[
  H^r(x_t,\nu(s)) \cong H^{r+1}_x(X_t,\nu(s+1))
 \]
 for all $s,r \in \Z$.
 In particular,
 \[
  H^r(x_t,\F_p) \cong H^{r+1}_x(X_t,\nu(1))
 \]
 for all $r \ge 0$ and
 \[
  H^r_x(X_t,\nu(s)) = 0
 \]
 for all $r \ge 0$ and $s \ne 1$.
\end{theorem}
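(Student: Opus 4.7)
The plan is to reduce both ``in particular'' statements to the single isomorphism in the theorem. After a brief reduction, the only nontrivial cases are $s = -1$ and $s = 0$. Indeed, for a closed point $x$, the residue field $\kappa(x)$ is finite separable over the perfect field $k$, so $\Omega^r_{\kappa(x)/k} = 0$ for $r \ge 1$ and hence $\nu(s)|_x = 0$ for all $s \ge 1$; on the smooth curve $X$, $\Omega^r_{X/k} = 0$ for $r \ge 2$, so $\nu(s+1) = 0$ on $X$ for $s \ge 1$; and $\nu(s) = 0$ for $s < 0$ by convention. So both sides vanish outside $s \in \{-1, 0\}$: the case $s = -1$ amounts to the vanishing $H^{r+1}_x(X_t, \F_p) = 0$ for all $r \ge 0$, while the case $s = 0$ is the main isomorphism $H^r(x_t, \F_p) \cong H^{r+1}_x(X_t, \nu(1))$.

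For the vanishing case, I would localize at $x$ via excision, working on the Henselian trait $\Spa(\cO^h_{C,x}, k)$, and apply the long exact sequence of the pair with open complement the generic point. The tame cohomology of $\Spa(K_x, k)$ with $\F_p$ coefficients agrees with the Galois cohomology of $\kappa(x)$: the tame Galois group fits in a short exact sequence with pro-prime-to-$p$ tame inertia, whose $\F_p$-cohomology vanishes in positive degrees, so Hochschild--Serre yields an inflation isomorphism from $H^r(\kappa(x), \F_p)$. The same is true for the Henselian trait itself by rigidity, so the comparison map between the two is an isomorphism in every degree, and the LES forces $H^{r+1}_x(X_t, \F_p) = 0$. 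This recovers the $\F_p$-purity of \cite{HueAd} but without invoking resolution of singularities.

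For the main isomorphism, I would construct a residue morphism $H^{r+1}_x(X_t, \nu(1)) \to H^r(x_t, \F_p)$ extending the classical residue $d\log f \mapsto \mathrm{ord}_x(f) \bmod p$. To verify bijectivity, I would again localize to the Henselian trait and exploit the defining short exact sequence
\[
0 \to \nu(1) \to \Omega^1_{X/k} \xrightarrow{C-1} \Omega^1_{X/k} \to 0
\]
together with its long exact sequence with supports. Since $\Omega^1_{X/k}$ is coherent, its tame cohomology coincides with its Zariski cohomology, and $H^1_x(X_t, \Omega^1_{X/k}) \cong \Omega^1_{K_x/k}/\Omega^1_{\cO^h/k}$ is the familiar module of principal parts on which one can hope to analyse $C - 1$ explicitly.

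The main obstacle lies here, in two parts. First, one must check that the defining sequence for $\nu(1)$ remains exact on the tame site: étale-local surjectivity of $C - 1$ is realised by Artin--Schreier--Witt extensions, which are generically wildly ramified, so exactness on the tame site is a genuine question and may require a modified surrogate or an appeal to the tame structure at the generic point. Second, the identification of the cokernel of $C - 1$ on principal parts with the Artin--Schreier cohomology $\kappa(x)/\wp(\kappa(x)) = H^1(x_t, \F_p)$ is the crux of the computation: one expects that tameness eliminates the wild contributions and reduces the Cartier operator on principal parts to the Frobenius pairing with the classical residue on $\kappa(x)$. Making this rigorous, so that both the boundary $H^1_x(X_t, \nu(1)) \twoheadrightarrow \F_p$ and the boundary $H^2_x(X_t, \nu(1)) \cong \kappa(x)/\wp(\kappa(x))$ fall out of one local computation, is the essential new input beyond the known $\F_p$-purity.
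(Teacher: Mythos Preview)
Your reduction to the two cases $s=-1$ and $s=0$ is correct and matches the paper's first step. Your treatment of the $s=-1$ case (vanishing of $H^{r+1}_x(X_t,\F_p)$) via the long exact sequence of the pair on the henselian trait is also essentially the paper's argument, though the paper phrases it as a stalk computation: it passes to the strict henselization $A$ of $\cO_{C,c}$, observes that both $(A,A)$ and $(K,A)$ are strongly henselian so have no higher strongly \'etale (hence tame, for $p$-torsion) cohomology, and reads off the vanishing from the sequence $\F_p(\Spa(A,A)) \overset{\sim}{\to} \F_p(\Spa(K,A))$. Your Hochschild--Serre formulation over the non-closed base is equivalent.

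The genuine divergence is in the $s=0$ case, and here the paper's route sidesteps both of your obstacles completely. Rather than invoking the defining sequence $0 \to \nu(1) \to \Omega^1_{d=0} \to \Omega^1 \to 0$ on the tame site (whose right-exactness there is, as you note, not immediate), the paper computes the sheaves $R^n i^!\nu(1)$ on~$x$ by computing their stalks at a geometric point~$\bar{x}$. At the stalk level one is working over the strict henselization~$A$, where the same acyclicity used for $s=-1$ collapses the excision sequence to
\[
 0 \longrightarrow \nu(1)(\Spa(A,A)) \longrightarrow \nu(1)(\Spa(K,A)) \longrightarrow H^1_{\bar{x}}(\Spa(A,A),\nu(1)) \longrightarrow 0
\]
with all other $H^n_{\bar{x}}$ vanishing. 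Since $\nu(1)$ is pulled back along the support map, this identifies with the scheme-theoretic sequence $0 \to \nu(1)(\Spec A) \to \nu(1)(\Spec K) \to H^1_{\bar c}(\Spec A,\nu(1)) \to 0$, and Milne has already shown that $a \mapsto a\,dt/t$ gives an isomorphism $\F_p \cong \nu(1)(\Spec K)/\nu(1)(\Spec A)$. One checks the Galois action is trivial by choosing~$t$ Galois-invariant. Thus $Ri^!\nu(1) \cong \F_p[-1]$, and the Grothendieck spectral sequence $H^m(x_t,R^n i^!\nu(1)) \Rightarrow H^{m+n}_x(X_t,\nu(1))$ degenerates to give the desired isomorphism $H^r(x_t,\F_p) \cong H^{r+1}_x(X_t,\nu(1))$.

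So the ``essential new input'' you are looking for is not a tame-site analysis of $C-1$ on principal parts; it is simply the observation that the stalk computation reduces to a question about \emph{sections} of $\nu(1)$ over $\Spec A$ and $\Spec K$, which is already in Milne. The Artin--Schreier exact sequence never enters.
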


The second aim of the article  is to prove Poincar\'e duality for the tame site.
As a preparation we introduce horizontally constructible sheaves on discretely valued adic spaces in \cref{constructible_sheaves}.
These are constructible sheaves that are locally constant on constructible subsets with respect to the \emph{Zariski} topology.
They turn out to be the sheaves for which Poincar\'e duality holds.
After computing the cohomological dimension of horizontally constructible sheaves on curves in \cref{cohdim-sec} we are prepared for proving Poincar\'e duality:

\begin{theorem}
 Let~$C$ be a smooth curve over a perfect field~$k$ and set $X= \Spa(C,k)$.
 For every horizontally constructible sheaf of $\F_p$-modules~$\mathcal{F}$ on~$X_t$ there is a natural quasi-isomorphism
 \[
  R\Hom_X(\mathcal{F},\nu_X(1))[1] \to R\Hom_k(R\pi_!\mathcal{F},\F_p)
 \]
 in $D^+(\F_p)$.
\end{theorem}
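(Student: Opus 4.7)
The plan is to construct a canonical trace pairing, then use d\'evissage together with the purity theorem above to reduce to two base cases: local duality at a closed point, and global duality on the smooth compactification of $C$. Concretely, the morphism in the statement is adjoint to the pairing
\[
 R\pi_! \mathcal{F} \otimes^L R\IHom_X(\mathcal{F}, \nu_X(1))[1] \to R\pi_! \nu_X(1)[1] \to \F_p,
\]
so the first task is to construct the trace $\tau \colon R\pi_! \nu_X(1)[1] \to \F_p$. Since $X = \Spa(C,k)$ already contains the valuations at the boundary of the smooth compactification $\bar C$ of $C$, the compactly supported functor $R\pi_!$ factors through the proper push-forward from $\bar C$, and $\tau$ is obtained from the classical global residue map $H^1(\bar C, \nu_{\bar C}(1)) \to \F_p$ furnished by Milne's duality for proper smooth curves in characteristic~$p$.

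For the d\'evissage, every horizontally constructible sheaf $\mathcal{F}$ fits into a short exact sequence $0 \to j_! j^* \mathcal{F} \to \mathcal{F} \to i_* i^* \mathcal{F} \to 0$, where $j \colon U \hookrightarrow X$ is the inclusion of an open stratum on which $\mathcal{F}$ is locally constant and $i$ is the inclusion of its zero-dimensional closed complement. Both sides of the duality morphism convert such sequences into distinguished triangles, so by the five lemma and Noetherian induction on the stratification it suffices to treat the two cases $\mathcal{F} = i_* \mathcal{G}$ for a single closed point $i \colon x \hookrightarrow X$ and $\mathcal{F} = j_! \mathcal{G}$ with $\mathcal{G}$ locally constant on $U$.

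In the first case, properness of $i$ gives $R\pi_! i_* \mathcal{G} = R\pi'_* \mathcal{G}$ with $\pi' \colon x \to \Spa(k,k)$; combining adjunction with the purity theorem above (which yields $Ri^! \nu_X(1) \simeq \F_p[-1]$), the left hand side becomes $R\Hom_x(\mathcal{G}, \F_p)$, and the assertion reduces to Shapiro's lemma for induced Galois modules along the finite extension $\kappa(x)/k$. In the second case, the exact sequence $0 \to j_! \F_p \to \F_p \to i_* \F_p \to 0$ (for $i$ the complement of $U$) together with the five lemma reduces the statement for $\mathcal{G} = \F_p$ to duality for the constant sheaf $\F_p$ on all of $X$, which is Milne's Poincar\'e duality for $\bar C$; general locally constant $\mathcal{G}$ are then handled by a corestriction/trace argument along a finite tame cover trivializing $\mathcal{G}$. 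The main obstacle, I expect, is the precise construction of $\tau$ in the tame/adic setting and, above all, verifying its compatibility with the excision triangles used in the d\'evissage, so that the local duality at closed points fits consistently with the global trace on $\bar C$.
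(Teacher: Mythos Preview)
Your overall strategy matches the paper's: construct the trace from Milne's map on $\bar C$, then d\'evissage via the open--closed decomposition, handling closed points by purity ($Ri^!\nu_X(1)\simeq\F_p[-1]$) and reducing the open piece to Milne's duality for the constant sheaf on the projective model. The paper also makes the compatibility you single out as the ``main obstacle'' precise: it checks that the Gysin isomorphism at a closed point composed with the adjunction $i_*i^!\nu_X(1)\to\nu_X(1)$ and the global trace is the identity (this is a separate lemma), so your instinct about where the delicate point lies is correct.

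There is, however, a genuine gap in the final step, where you pass from a general locally constant $\mathcal{G}$ to the constant case by a ``corestriction/trace argument along a finite tame cover trivialising $\mathcal{G}$.'' If this means the standard splitting $\mathcal{G}\hookrightarrow\pi_*\pi^*\mathcal{G}\xrightarrow{\mathrm{tr}}\mathcal{G}$ with composite multiplication by $\deg(\pi)$, it does not work here: the coefficients are $\F_p$ in characteristic~$p$, and the trivialising cover $\pi$ can have degree divisible by~$p$ (e.g.\ an Artin--Schreier local system on a projective curve, which is \'etale and hence tame in the sense of this paper), so $\mathcal{G}$ need not be a summand of $\pi_*\pi^*\mathcal{G}$. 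The paper replaces this by a descending induction: it first proves, in a separate preparatory section, that both $\Ext^i_{\F_p}(\mathcal{F},\nu_X(1))$ and $H^i_c(X_\tau,\mathcal{F})$ vanish for $i\ge 2$ and all horizontally constructible $\mathcal{F}$ (this in turn uses purity again), giving a finite starting point; then from $0\to\mathcal{F}\to\pi_*\pi^*\mathcal{F}\to\mathcal{F}'\to 0$ and the five lemma one first gets surjectivity of $\alpha^i_X(\mathcal{F})$ for every horizontally constructible $\mathcal{F}$ in the given degree, and a second application gives injectivity. These cohomological-dimension bounds are essential input that your sketch is missing.
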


The proof is a formal consequence of cohomological purity and Poincar\'e duality for projective curves (see \cite{Milne76}).
It is given in \cref{duality-sec}.

\medskip\noindent
\emph{Acknowledgement:} The author wants to thank Alexander Schmidt for thoroughly proofreading this preprint. His feedback led to many improvements.

\section{Preliminaries}

In this section we collect some basic results on the tame and (strongly) \'etale sites of adic spaces.
We will tacitly assume that all adic spaces are of the type examined by Huber:
They are either strongly noetherian and analytic or the quasi-compact sections of the structure sheaf have a noetherian ring of definition.
This also includes all discretely ringed adic spaces.

Recall (\cite{Hu96}, Definition~1.6.5) that a morphism of adic spaces $f:Y \to X$ is \emph{\'etale} if it is locally of finite presentation and for any solid arrow diagram
\[
 \begin{tikzcd}
  \Spa(A,A^+)/I	\ar[r]	\ar[d]		& Y	\ar[d,"f"]	\\
  \Spa(A,A^+)	\ar[r]	\ar[ur,dotted]	& X,
 \end{tikzcd}
\]
where~$I$ is an ideal of~$A$ with $I^2=0$, a unique dotted arrow exists making the diagram commutative.
The morphism~$f$ is called \emph{tame} (\emph{strongly \'etale}) if in addition to being \'etale it has the following property for every point $y \in Y$ with image $x \in X$:
The residue field extension $k(y)|k(x)$ is tamely ramified (unramified) with respect to the valuation corresponding to~$y$ (see \cite{HueAd}, Definition~3.1).

The \emph{\'etale site}~$X_{\et}$ of an adic space~$X$ has as its underlying category the category of all \'etale morphisms $Y \to X$ and coverings are surjective families.
The \emph{tame site}~$X_t$ and the \emph{strongly \'etale site}~$X_{\set}$ are the full subcategories of~$X_{\et}$ defined by taking all tame, respectively strongly \'etale morphisms $Y \to X$.
Throughout this article we will write~$\tau$ for one of these three topologies $\{\et,\set,t\}$.
For instance, we will prove assertions about~$X_{\tau}$ meaning that they are true for the \'etale, the tame and the strongly \'etale site.
If~$\mathcal{F}$ is a sheaf of abelian groups on~$X_{\tau}$, we write $H^n(X,\mathcal{F})$ for its cohomology groups (instead of $H^n(X_{\tau},\mathcal{F})$) whenever this does not cause confusion.

Recall from \cite{HueAd}, Definition~6.1 that a Huber pair $(A,A^+)$ is \emph{local} if $A$ and~$A^+$ are local,
 $A^+$ is the valuation subring of~$A$ associated with a valuation whose support is the maximal ideal of~$A$, and the maximal ideal~$\m^+$ of~$A^+$ is open.
It is \emph{henselian} (strongly henselian, strictly henselian if it is local and~$A^+$ is henselian ($A^+$ is strictly henselian, $A$ is strictly henselian).
A strongly henselian Huber pair $(A,A^+)$ is \emph{tamely henselian}
 if the value group of the associated valuation~$v$ is a $\Z[\frac{1}{p}]$-module, where~$p$ denotes the residue characteristic of~$v$.
We call an adic space local, henselian, strongly henselian, tamely henselian, or strictly henselian if it is the adic spectrum of a Huber pair with the respective property.
Given an adic space~$X$ and a point $x \in X$ we obtain a local adic space, the \emph{localization}~$X_x$, by taking the intersection of all open neighborhoods of~$x$.
We obtain a henselian adic space by taking the limit $\lim Y$ over all pointed \'etale morphisms $(Y,y) \to (X,x)$ such that $k(y)^+ = k(x)^+$.
It is called the \emph{henselization} of~$X$ at~$x$ and we denote it $X_x^h$.
For $\tau \in \{\set,t,\et\}$ and a geometric point~$\bar{x}$ with respect to~$\tau$ we define the $\tau$-localization (or just localization) $X_{\bar{x}}^{\tau}$ at~$\bar{x}$
 to be the limit $\lim Y$ over all $\tau$-morphisms $Y \to X$ together with a lift of~$\bar{x}$ to~$Y$.
If $\tau = \set$, $X_{\bar{x}}^{\tau}$ is strongly henselian, if $\tau = t$, it is tamely henselian, and if $\tau = \et$, it is strictly henselian.
According to wether $\tau = \set,t$, or~$\et$ we also call~$X_{\bar{x}}^{\tau}$ the strong henselization, tame henselization, or strict henselization, respectively.
See \cite{HueAd}, \S 6 for more detail.


\section{Cohomological purity} \label{purity-sec}

In this section we prove a positive characteristic version of cohomological purity.
More precisely, we show cohomological purity for $p$-torsion sheaves on a curve of characteristic $p > 0$.
In this setting the Tate twist by $\ell$-th roots of unity is replaced by a tensor product with the logarithmic de Rham sheaves $\nu(r)$.
Let us first recall their definition.
Let~$X$ be a scheme of finite type over a perfect field~$k$ of characteristic $p > 0$.
We consider the sheaves of differentials $\Omega^r_{X/k}$ and the sheaves of closed differentials $\Omega^r_{X/k,d=0}$ on~$X$.
For every $r \ge 0$ there is a Cartier operator
\[
 C : \Omega^r_{X/k,d=0} \to \Omega^r_{X/k}
\]
defined uniquely by the properties
\begin{enumerate}[(a)]
 \item $C(1) = 1$;
 \item $C(f^p\omega) = fC(\omega)$ for $f \in \cO_X$, $\omega \in \Omega^r_{X/k,d=0}$;
 \item $C(\omega \wedge \omega')= C(\omega) \wedge C(\omega')$ for $\omega, \omega'$ closed;
 \item $C(\omega) = 0$ if and only if~$\omega$ is exact;
 \item $C(f^{p-1}df) = df$ for $f \in \cO_X$;
\end{enumerate}
(see \cite{Milne76}, Lemma~1.1).
The logarithmic de Rham sheaf~$\nu(r)$ is defined to be the kernel of
\[
 1-C : \Omega^r_{X/k,d=0} \to \Omega^r_{X/k},
\]
which in fact is surjective with respect to the \'etale topology (see \cite{Milne76}, Lemma~1.3), i.e., we obtain a short exact sequences of \'etale sheaves
\[
 0 \to \nu(r) \to \Omega^r_{X/k,d=0} \overset{1-C}{\longrightarrow} \Omega^r_{X/k} \to 0.
\]

Assume now that~$X$ is smooth and quasi-projective over~$k$
In \cite{Milne86}, section~2, Milne constructs for every smooth closed subscheme $Z \subset X$ of codimension~$c$ and for every integer $r$ a cycle map
\[
 H^0(Z,\nu(r)) \to H^c_Z(X,\nu(r+c)),
\]
which is an isomorphism by \cite{Milne86}, Corollary~2.2.
Moreover, the same corollary states that
\[
 H^i_Z(X,\nu(r)) = 0
\]
for $i < c$.
By the general philosophy of cohomological purity we would expect isomorphisms
\begin{equation} \label{puritymap}
 H^i(Z,\nu(r)) \overset{\sim}{\to} H^{c+i}_Z(X,\nu(r+c))
\end{equation}
for every integer~$i$.
Milne's results show that this is true for $i \le c$ but it is wrong, in general, for $i > c$ (see \cite{Milne86}, Remark~2.4).

In this section we will show, in the case that~$X$ is a curve over an algebraically closed field, that replacing the \'etale cohomology by the tame cohomology, we obtain indeed purity isomorphisms of the form (\ref{puritymap}) as above for all integers~$i$.
Let us fix notation.
For the rest of this section~$k$ is a perfect field of characteristic $p > 0$.
We consider a curve~$C$ over~$k$ (i.e., a one-dimensional scheme of finite type over~$k$) and set $X = \Spa(C,k)$.
Pulling back the sheaf $\nu(r)$ on~$C$ via the support map
\[
 \supp: X = \Spa(C,k) \to C
\]
we obtain an \'etale sheaf on~$X$, which we can also view as a tame or strongly \'etale sheaf.
We denote it again by $\nu(r)$.

\begin{lemma} \label{cohdimsupp}
 Suppose that~$k$ is algebraically closed.
 Let $c \in C$ be a closed point.
 Set $x = \Spa(c,c)$, such that we have a closed immersion $x \to X$.
 Let $\tau \in \{t,\set\}$ and~$\mathcal{F}$ be a sheaf on~$X_{\tau}$.
 Assume that~$\mathcal{F}$ is a $p$-torsion sheaf in case~$\tau$ is the tame topology.
 Then
 \[
  H^i_x(X,\mathcal{F}) = 0
 \]
 for all $i \ne 0,1$ and we have an exact sequence
 \[
  0 \to H^0_x(X,\mathcal{F}) \to \mathcal{F}(\Spa(A,A)) \to \mathcal{F}(\Spa(K,A)) \to H^1_x(X,\mathcal{F}) \to 0,
 \]
 where $A$ is the henselization of the reduced local ring of~$C$ at~$c$ and~$K$ is its total ring of fractions.
\end{lemma}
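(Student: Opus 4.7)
The strategy is to combine the excision long exact sequence for the closed immersion $x \hookrightarrow X$ with vanishing of cohomology on a strictly henselian neighborhood of~$x$. Since $\tau$-cohomology with torsion coefficients is insensitive to nilpotents, I first replace~$C$ by $C_{\mathrm{red}}$ so that the relevant local ring is $\mathcal{O}_{C,c}$, whose henselization is~$A$. Because $k$ is algebraically closed, $k(c) = k$ and the valuation at~$x$ is trivial; consequently the henselization of~$X$ at~$x$ identifies with $\Spa(A,A)$ (here $A^+ = A$), and its open complement with $\Spa(K,A)$. Since local cohomology is preserved when passing to such an \'etale neighborhood, the excision triangle yields
\[
 \cdots \to H^i_x(X,\mathcal{F}) \to H^i(\Spa(A,A),\mathcal{F}) \to H^i(\Spa(K,A),\mathcal{F}) \to H^{i+1}_x(X,\mathcal{F}) \to \cdots.
\]

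The middle term vanishes in positive degree for both $\tau \in \{t,\set\}$. The valuation at the closed point of $\Spa(A,A)$ has support~$\mathfrak{m}$ and thus trivial value group, while the residue field is $k = \bar{k}$; hence $(A,A)$ is strictly, strongly, and tamely henselian. For such Huber pairs, $H^0$ computes the $\tau$-stalk at~$x$ while $H^i$ vanishes for all $i > 0$.

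The key step is the analogous vanishing on $\Spa(K,A)$. For $\tau = \set$, the Huber pair $(K,A)$ is already strongly henselian since $A^+ = A$ is strictly henselian (a henselian DVR with algebraically closed residue field); moreover every strongly \'etale cover of $\Spa(K,A)$ must be unramified over the closed point and is therefore trivial, so $H^i(\Spa(K,A)_{\set},\mathcal{F}) = 0$ for $i > 0$. For $\tau = t$ the pair $(K,A)$ fails to be tamely henselian, as the value group~$\Z$ of the DVR valuation is not $p$-divisible. However, the tame henselization of $(K,A)$ is $(K_t,A_t)$ with $A_t = \bigcup_{(n,p)=1} A[\pi^{1/n}]$, and the Galois group of $K_t/K$ is $\widehat{\Z}^{(p)} = \prod_{\ell \ne p}\Z_\ell$. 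A Hochschild--Serre argument reduces the tame cohomology of $\Spa(K,A)$ to the continuous cohomology of $\widehat{\Z}^{(p)}$ with values in a $p$-torsion module, which vanishes in positive degree since the pro-order is prime to~$p$.

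Feeding both vanishings into the excision sequence produces $H^i_x(X,\mathcal{F}) = 0$ for $i \ge 2$ (and trivially for $i < 0$), together with the exact sequence
\[
 0 \to H^0_x(X,\mathcal{F}) \to \mathcal{F}(\Spa(A,A)) \to \mathcal{F}(\Spa(K,A)) \to H^1_x(X,\mathcal{F}) \to 0.
\]
The principal obstacle is the tame-topology vanishing on $\Spa(K,A)$ with $p$-torsion coefficients; the rest is formal from $\tau$-henselianity and excision.
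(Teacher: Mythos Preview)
Your argument is correct and matches the paper's approach: reduce to the reduced curve, identify the henselization of $X$ at $x$ with $\Spa(A,A)$, apply excision for $x \hookrightarrow \Spa(A,A)$, and conclude from the vanishing of positive-degree $\tau$-cohomology on the strongly henselian pairs $(A,A)$ and $(K,A)$. The only difference is in how the tame case is handled: the paper simply cites \cite{HueAd}, Proposition~9.5 to reduce the tame case for $p$-torsion sheaves to the strongly \'etale case, whereas you unpack that comparison directly via Hochschild--Serre for the pro-prime-to-$p$ group $\widehat{\Z}^{(p)}$ acting on the tame henselization; this is more self-contained but amounts to the same thing. One minor imprecision: you treat $A$ as a DVR, but the lemma allows $C$ to be singular at $c$, in which case $K$ may be a finite product of fields---the argument still works after decomposing $\Spa(K,A)$ into its connected components.
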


\begin{proof}
 It suffices to prove the assertion for the strongly \'etale topology.
 For the tame topology it then follows by \cite{HueAd}, Proposition~9.5.
 Without loss of generality we may assume that~$C$ is reduced (see \cite{HueAd}, Proposition~8.1).
 The henselization of~$X$ at~$x$ is given by $\Spa(A,A)$.
 Hence we obtain by \cite{HueAd}, Proposition~8.3 that
 \[
  H^i_x(X_{\set},\mathcal{F}) = H^i_x(\Spa(A,A)_{\set},\mathcal{F}).
 \]
 The excision sequence for $x \hookrightarrow \Spa(A,A)$ reads
 \[
  0 \to H^0_x(X_{\set},\mathcal{F}) \to \mathcal{F}(\Spa(A,A)) \to \mathcal{F}(\Spa(K,A)) \to H^1_x(X_{\set},\mathcal{F}) \to \ldots
 \]
 The Huber pairs $(A,A)$ and $(K,A)$ are both strongly henselian.
 Therefore,
 \[
  H^i(\Spa(A,A)_{\set},\mathcal{F}) = H^i(\Spa(K,A)_{\set},\mathcal{F}) = 0.
 \]
 for $i \ge 1$.
\end{proof}

\begin{lemma} \label{calc_cohsupp}
 With notation as in \cref{cohdimsupp} we have that
 \[
  \nu(1)(\Spa(A,A)) \to \nu(1)(\Spa(K,A))
 \]
 is injective, i.e., $H^0_x(X,\nu(1)) = 0$ and
 \[
  0 \to \nu(1)(\Spa(A,A)) \to \nu(1)(\Spa(K,A)) \to H^1_x(X_{\tau},\nu(1)) \to 0
 \]
 is exact for $\tau \in \{t,\set\}$.
\end{lemma}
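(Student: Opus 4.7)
The plan is to reduce the lemma, via the excision sequence of \cref{cohdimsupp} applied to $\mathcal{F} = \nu(1)$, to the single injectivity statement $\nu(1)(\Spa(A,A)) \hookrightarrow \nu(1)(\Spa(K,A))$. That lemma already produces, for $\tau \in \{t,\set\}$, the four-term exact sequence
\[
 0 \to H^0_x(X,\nu(1)) \to \nu(1)(\Spa(A,A)) \to \nu(1)(\Spa(K,A)) \to H^1_x(X_\tau,\nu(1)) \to 0,
\]
so the only new content is the vanishing $H^0_x(X,\nu(1)) = 0$.

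To establish this injectivity I would exploit that, by definition, $\nu(1)$ sits as a subsheaf of $\Omega^1_{X/k,d=0}$, which is itself a subsheaf of $\Omega^1_{X/k}$. Consequently the map under consideration factors as an inclusion into the restriction
\[
 \Omega^1_{X/k}(\Spa(A,A)) \to \Omega^1_{X/k}(\Spa(K,A)),
\]
and it suffices to show that this latter map is injective. Here $\Omega^1_{X/k}$ is understood as the pullback via $\supp\colon X \to C$ of the coherent sheaf $\Omega^1_{C/k}$, and compatibility of such pullbacks with the filtered limits defining the henselisation allows one to identify these sections with the algebraic modules of differentials $\Omega^1_{A/k}$ and $\Omega^1_{K/k}$.

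Since $k$ is algebraically closed and $C$ is smooth at $c$, the ring $A$ is a strictly henselian discrete valuation ring, hence smooth of relative dimension one over the perfect field $k$. Therefore $\Omega^1_{A/k}$ is a free $A$-module, and $\Omega^1_{A/k} \to \Omega^1_{K/k} = K \otimes_A \Omega^1_{A/k}$ is nothing but the base change of a free module along the injective localisation $A \hookrightarrow K$, which is injective. This delivers the required vanishing of $H^0_x(X,\nu(1))$ and completes the deduction of the lemma.

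The step I expect to require the most bookkeeping is the identification of the adic sections $\Omega^1_{X/k}(\Spa(A,A))$ and $\Omega^1_{X/k}(\Spa(K,A))$ with $\Omega^1_{A/k}$ and $\Omega^1_{K/k}$, respectively; for this one needs to trace the pullback along $\supp$ through the colimits defining the henselisation and its punctured variant. Once this identification is in place, the injectivity is purely formal, relying on nothing beyond the smoothness of $A$ over the perfect field~$k$.
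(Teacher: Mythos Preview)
Your proposal is correct and follows essentially the same route as the paper: reduce via \cref{cohdimsupp} to the injectivity of the restriction map, embed $\nu(1)$ into $\Omega^1$, and use that $\Omega^1$ is free of rank one over the local ring~$A$ so that the restriction map identifies with $A \hookrightarrow K$. The paper phrases this last step slightly more directly (local freeness of rank one on a local space gives $\cO$, hence the map is $A \to K$), which sidesteps the bookkeeping you anticipate about identifying adic sections with $\Omega^1_{A/k}$ and $\Omega^1_{K/k}$.
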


\begin{proof}
 In view of \cref{cohdimsupp} all we need to show is that $\nu(1)(\Spa(A,A)) \to \nu(1)(\Spa(K,A))$ is injective.
 By definition, $\nu(1)$ is subsheaf of $\Omega^1_{\Spa(A,A)}$, which is locally free of rank~$1$ over the structure sheaf.
 As $\Spa(A,A)$ is local, $\Omega^1_{\Spa(A,A)}$ is isomorphic to $\cO_{\Spa(A,A)}$ and thus the map
 \[
  \Omega^1_{\Spa(A,A)}(\Spa(A,A)) \to \Omega^1_{\Spa(A,A)}(\Spa(K,A))
 \]
 identifies with the natural inclusion of $A$ into $K$.
 In particular, it is injective.
 Hence, also the corresponding map for $\nu(1)$ is injective.
\end{proof}

\begin{proposition} \label{purity_sheaf}
 Let $c \in C$ be a closed point.
 Set $x = \Spa(c,c)$, such that we have a closed immersion $i:x \to X$.
 Then, for $\tau \in \{t,\set\}$,
 \[
  R^si^!\nu(1) = 0
 \]
 for $s \ne 1$ and $R^1i^!\nu(1)$ is isomorphic to the constant sheaf\/ $\F_p$ on~$x$.
\end{proposition}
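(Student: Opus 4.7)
The plan is to compute the stalks of $R^s i^! \nu(1)$ at geometric points of $x$ and identify the resulting Galois module with $\F_p$ in degree~$1$ and zero elsewhere. Since $x = \Spa(k(c),k(c))$ carries only the trivial valuation, its tame and strongly \'etale sites agree and correspond to continuous $\Gal(k(c)^\sep/k(c))$-modules, so it suffices to compute a single stalk together with its Galois action.

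Let $\bar x \to x$ be a geometric point. Because $k$ is perfect and $k(c)/k$ is algebraic, the residue field $k(c)^\sep$ is algebraically closed. The $\tau$-localization $X^{(\tau)}_{\bar x}$ is the adic spectrum $\Spa(A^{(\tau)},A^{(\tau)})$, where $A^{(\tau)}$ denotes the $\tau$-henselization of the reduced local ring of~$C$ at~$c$; in particular, $A^{(\tau)}$ has algebraically closed residue field. Since $(R^s i^!\nu(1))_{\bar x} = H^s_{\bar x}(X^{(\tau)}_{\bar x},\nu(1))$, Lemmas~\ref{cohdimsupp} and~\ref{calc_cohsupp} (whose hypotheses are satisfied in this localized setting, with $k(c)^\sep$ playing the role of the algebraically closed field) immediately give the vanishing for $s\ne 1$ together with an isomorphism
\[
 (R^1 i^! \nu(1))_{\bar x} \;\cong\; \nu(1)\bigl(\Spa(K^{(\tau)},A^{(\tau)})\bigr) \big/ \nu(1)\bigl(\Spa(A^{(\tau)},A^{(\tau)})\bigr),
\]
where $K^{(\tau)} = \mathrm{Frac}\,A^{(\tau)}$.

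Next I would identify the right-hand side canonically with $\F_p$. Using Kato's $d\log$-isomorphism $L^\times/(L^\times)^p \xrightarrow{\sim} \nu(1)(L)$ for fields $L$ of characteristic~$p$, the quotient becomes $(K^{(\tau)})^\times / \bigl((A^{(\tau)})^\times \cdot ((K^{(\tau)})^\times)^p\bigr)$; via the valuation this is the cokernel of multiplication by $p$ on the value group, which is $\Z$ in the strongly \'etale case and $\Z_{(p)}$ in the tame case. Both yield $\F_p$. A uniformizer $\pi\in\cO_{C,c}$ (available since $C$ is smooth at $c$, as is implicit throughout) provides a canonical generator $[d\log \pi]$: two uniformizers differ by an element of $(A^{(\tau)})^\times$, so the class is independent of the choice, and since $\pi$ lies in the local ring at~$c$, it is fixed by $\Gal(k(c)^\sep/k(c))$. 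Thus the identification with $\F_p$ is Galois-equivariant with trivial action, giving $R^1 i^! \nu(1)\cong \F_p$ as sheaves on~$x_\tau$.

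The main obstacle is the explicit quotient computation in the third step. Invoking Kato's theorem is the quickest route; alternatively one can bypass it by starting from the defining exact sequence $0\to\nu(1)\to\Omega^1_{d=0}\xrightarrow{1-C}\Omega^1\to 0$, applying the excision sequence to each term, computing $H^*_{\bar x}$ of $\Omega^1$ via the freeness $\Omega^1_{\Spa(A^{(\tau)},A^{(\tau)})} \cong \cO$ established in Lemma~\ref{calc_cohsupp}, and tracking the action of the Cartier operator on $\Omega^1_{d=0}$ to read off the cokernel directly.
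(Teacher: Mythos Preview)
Your argument follows the same line as the paper's: pass to the stalk at a geometric point $\bar{x}$, invoke Lemmas~\ref{cohdimsupp} and~\ref{calc_cohsupp} to reduce to the quotient $\nu(1)(\Spa(K,A))/\nu(1)(\Spa(A,A))$, identify this with $\F_p$ via the class of $d\log$ of a uniformizer, and observe that a uniformizer can be chosen in $\cO_{C,c}$, hence is Galois-invariant. The paper obtains the quotient computation by citing \cite{Milne86}, Proposition~2.1 (the map $a\mapsto a\,dt/t$); your route via Kato's $d\log$ isomorphism amounts to the same thing.

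One correction: since the point $x\in X$ carries the \emph{trivial} valuation, every \'etale morphism is automatically both strongly \'etale and tame at points lying over~$x$, so the tame and strongly \'etale localizations of~$X$ at~$\bar{x}$ \emph{coincide}---both equal $\Spa(A,A)$ with $A$ the strict henselization of $\cO_{C,c}$ (the paper states this explicitly). There is thus no separate ring $A^{(t)}$, and the value group of $K=\mathrm{Frac}(A)$ is $\Z$ in both cases, not $\Z_{(p)}$. Your conclusion is unaffected since $\Z_{(p)}/p\Z_{(p)}\cong\F_p$ anyway, but the distinction you draw between the two topologies at this step is spurious.
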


\begin{proof}
 Let~$\bar{x}$ be a geometric point lying over~$x$ corresponding to a separable closure $\bar{k}$ of the residue field $k(x)$ at~$x$.
 Note that over~$x$ geometric points for the tame and the strongly \'etale topology are the same.
 Denote by~$A$ the strict henselization of~$\cO_C$ at $\Spec \bar{k} \to C$.
 Then $\Spa(A,A)$ is the strongly \'etale localization and also the tame localization of~$X$ at~$\bar{x}$.
 As $\tau$-cohomology commutes with limits (\cite{HueAd}, Proposition~5.3), the stalk of $R^si^!\nu(1)$ is
 \[
  R^si^!\nu(1)_{\bar{x}} = H^s_{\bar{x}}(\Spa(A,A),\nu(1)).
 \]
 Applying \cref{calc_cohsupp} to the base change of~$C$ to a separable closure of~$k$ (chosen compatibly with $\bar{k})$ we obtain that
 \[
  R^si^!\nu(1) = 0.
 \]
 for $s \ne 1$ and an exact sequence
 \[
  0 \to \nu(1)(\Spa(A,A)) \to \nu(1)(\Spa(K,A)) \to H^1_{\bar{x}}(\Spa(A,A),\nu(1)) \to 0.
 \]
 The sheaf $\nu(1)$ on~$X$ is actually the pullback of the corresponding sheaf $\nu(1)$ on $C$ via the support map $X \to C$.
 Therefore, the above exact sequence identifies with the exact sequence in the proof of Proposition~2.1 in \cite{Milne86} (top of page 316)
 \[
  0 \to \nu(1)(\Spec A) \to \nu(1)(\Spec K) \to H^1_{\bar{c}}(\Spec A,\nu(1)) \to 0,
 \]
 where $\bar{c} = \Spec \bar{k}$ is the support of~$\bar{x}$.
 Choose a uniformizer~$t$ of~$A$.
 In the proof of loc.\ cit.\ Milne shows that the map
 \begin{align*}
  \F_p	& \longrightarrow \nu(1)(\Spec K)/\nu(1)(\Spec A)	\\
  a	& \mapsto a\frac{dt}{t}
 \end{align*}
 is an isomorphism and is independent of the choice of~$t$.
 Hence,
 \[
  H^1_{\bar{x}}(\Spa(A,A),\nu(1)) \cong \F_p.
 \]
 In order to complete the proof it remains to show that the action of the absolute Galois group of~$k(x)$ on $H^1_{\bar{x}}(\Spa(A,A),\nu(1))$ is trivial.
 The isomorphism
 \[
  H^1_{\bar{x}}(\Spa(A,A),\nu(1)) \cong \nu(1)(\Spec K)/\nu(1)(\Spec A)
 \]
 is Galois equivariant and as we have seen above every class in $\nu(1)(\Spec K)/\nu(1)(\Spec A)$ has a representative of the form $a\, dt/t$ with $a \in \F_p$.
 Moreover, we can choose the uniformizer~$t$ to be Galois invariant.
 Then, $a dt/t$ is also Galois invariant.
\end{proof}

\begin{corollary} \label{purity}
 With notation as in \cref{purity_sheaf} we have that
 \[
  H^r(x_{\tau},\nu(s)) \cong H^{r+1}_x(X_{\tau},\nu(s+1))
 \]
 for all $s,r \in \Z$ and $\tau \in \{t,\set\}$.
 In particular,
 \[
  H^r(x_{\tau},\F_p) \cong H^{r+1}_x(X_{\tau},\nu(1))
 \]
 for all $r \ge 0$ and
 \[
  H^r_x(X_{\tau},\nu(s)) = 0
 \]
 for all $r \ge 0$ and $s \ne 1$.
\end{corollary}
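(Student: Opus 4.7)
My plan is to deduce the corollary formally from Proposition~\ref{purity_sheaf} by means of the Grothendieck spectral sequence
\[
 E_2^{p,q} = H^p(x_\tau,R^qi^!\mathcal{F}) \;\Longrightarrow\; H^{p+q}_x(X_\tau,\mathcal{F}).
\]
The task thus reduces to understanding the sheaves $R^qi^!\nu(s+1)$ for every $s \in \Z$.

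The essential case is $s=0$: Proposition~\ref{purity_sheaf} asserts that $Ri^!\nu(1)$ is concentrated in degree $1$ and equals the constant sheaf $\F_p$ there, so the spectral sequence collapses to an isomorphism
\[
 H^r(x_\tau,\F_p) \cong H^{r+1}_x(X_\tau,\nu(1))
\]
for every $r \in \Z$. Using $\nu(0)=\F_p$, this is precisely the master isomorphism at $s=0$, and specializing to $r \ge 0$ yields the first ``in particular'' assertion.

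For every other $s$ I expect both sides to vanish. Dimension considerations give this immediately away from $s=-1$: on the curve $C$ we have $\nu(s+1)=0$ whenever $s+1 \ge 2$, and on the zero-dimensional scheme $x$ (whose residue field $k(c)/k$ is separable because $k$ is perfect) we have $\nu(s)=0$ whenever $s \ge 1$; together with the convention $\nu(r)=0$ for $r<0$ this settles all $s \ne -1,0$. The remaining case $s=-1$ requires a small separate argument: the left-hand side is zero by definition, but one must show that $H^m_x(X_\tau,\F_p)=0$ for every $m$. For this I run the same stalkwise reduction as in the proof of Proposition~\ref{purity_sheaf}, computing the geometric stalks of $R^qi^!\F_p$ via \cref{cohdimsupp} applied to the constant sheaf $\F_p$ on the strict henselization $\Spa(A,A)$. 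Since both $\Spa(A,A)$ and its punctured generic fibre $\Spa(K,A)$ are connected, the restriction $\F_p(\Spa(A,A)) \to \F_p(\Spa(K,A))$ is the identity on $\F_p$; the four-term excision sequence in \cref{cohdimsupp} then forces $R^qi^!\F_p=0$ in every degree, and the spectral sequence delivers the desired vanishing.

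The real work has already been done in Proposition~\ref{purity_sheaf}; the present corollary is a formal consequence, the only small subtlety being the $s=-1$ check, i.e.\ the vanishing of $H^*_x(X,\F_p)$, which is handled by the direct computation above. The two ``in particular'' statements then drop out of the master isomorphism by setting $s=0$ and by collecting the vanishing cases after the reindexing $s+1 \leadsto s$.
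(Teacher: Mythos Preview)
Your proof is correct and follows essentially the same approach as the paper: reduce to the nontrivial cases by the dimension count on $\nu(s)$, handle $s=0$ via the spectral sequence together with \cref{purity_sheaf}, and handle $s=-1$ (i.e.\ the vanishing of $H^*_x(X,\F_p)$) via \cref{cohdimsupp}. You are in fact a bit more careful than the paper in the $s=-1$ step, explicitly passing to geometric stalks so that \cref{cohdimsupp} (which assumes $k$ algebraically closed) applies.
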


\begin{proof}
 As~$C$ is a curve, $\Omega^s_{C/k}$ is nontrivial only for $s=0$ and $s=1$.
 Hence, $\nu(s)$ is nontrivial on~$X$ only for $s=0$ and $s=1$.
 Similarly, $\nu(s)$ is nontrivial on~$x$ only for $s=0$.
 If $s=0$, the assertion follows directly from \cref{cohdimsupp} as $\nu(0)$ is the constant sheaf $\F_p$.
 For $s=1$ consider the spectral sequence
 \[
  H^m(x_{\tau},R^ni^!\nu(1)) \Rightarrow H^{m+n}_x(X_{\tau},\nu(1)).
 \]
 By \cref{purity_sheaf}, we have
 \[
  R^ni^!\nu(1) = \begin{cases}
                  \F_p	& n=1	\\
                  0	& n\ne 1.
                 \end{cases}
 \]
 This gives the desired isomorphism.
\end{proof}

\begin{remark}
 For the \'etale topology purity does not hold in full generality.
 A partial result was obtained by Milne in \cite{Milne86}, \S 2 and by Moser in \cite{Mos99}, \S 2.
 If~$X$ is a smooth scheme of dimension~$d$ over a perfect field~$k$ and $i:Z \hookrightarrow X$ a smooth closed subscheme of codimension~$c$, then there is a canonical quasi-isomorphism
 \[
  \nu_Z(d-c)[-c] \cong Ri^!\nu_X(d)
 \]
 in $D(Z)$ (see \cite{Mos99}, Corollary to Theorem~2.4 in combination with (1.6)).
 So we have purity only for the sheaf $\nu_X(d)$ and not for all $\nu_X(r)$.
 In order to see that in general, purity does not hold for $r < d$ consider the following situation:
 Let~$k$ be an algebraically closed field, $X = \P_k^1$, $Z = \{\infty\}$, and $s=0$.
 Then $H^n(X,\F_p) = 0$ for $n \ge 1$ but $H^1(X-Z,\F_p)$ is infinite dimensional.
 In particular, the cohomology group with support in~$Z$, $H^2_Z(X,\F_p)$ is not zero.
 But this is what would be expected from purity.
\end{remark}

%
%
%
%

\section{Horizontally constructible sheaves} \label{constructible_sheaves}
The second goal of this article is to use cohomological purity in order to prove Poincar\'e duality.
However, for a scheme~$X$ of finite type over an algebraically closed field~$k$ it is not to be expected that Poincar\'e duality holds for all constructible sheaves.
We need to restrict to a certain kind of constructible sheaves that we will define in this section: the horizontally constructible sheaves.

We follow the terminology of \cite{EGAI} and \cite{stacks-project} concerning constructible subsets:
A subset~$T$ of a topological space~$X$ is called constructible
 if it is contained in the boolean algebra generated by the retrocompact open subsets.
It is called locally constructible if every point $x \in X$ has an open neighborhood $U \subseteq X$
 such that $T \cap U$ is constructible in~$U$.

We fix a noetherian ring~$\Lambda$ and a topology $\tau \in \{\et,t,\set\}$.
As for the \'etale site, constructible sheaves are defined as follows (see \cite{Hu96}, Definition~2.7.2)

\begin{definition}
 Let~$X$ be a pseudo-adic space (see \cite{Hu96}, Definition~1.10.3).
 A sheaf~$\mathcal{F}$ of $\Lambda$-modules on $X_{\tau}$ is constructible if for every point $x \in |X|$ there is a locally constructible subset $L \subseteq |X|$ containing~$x$ such that~$\mathcal{F}|_L$ is locally constant of finite type.
\end{definition}

Note that Huber's notion of a locally constructible subsets differs from ours.
However, given a point $x \in |X|$ as in the above definition
 and a locally constructible subset $L \subseteq |X|$ in Huber's sense
 (i.e.,~$L$ is locally constructible in our sense in an open $U$ of~$X$) containing~$x$,
 we obtain a locally constructible subset $T$ in our sense such that $x \in T \subset L$
 by intersecting~$L$ with any quasi-compact open neighborhood of~$x$ contained in~$U$.
Hence, in the definition of a constructible sheaf it does not matter whether we use our or Huber's definition of a locally constructible subset.

Horizontally constructible sheaves are special kinds of constructible sheaves.
We will only define them for discretely ringed adic spaces, not for general pseudo-adic spaces, as this is all we need in this article.
Note, however, that the definition of constructible sheaves involves pseudo-adic spaces even if we only want to define them on adic spaces as we need to make sense of a sheaf on a constructible subset of the adic space.
This is not the case for horizontally constructible sheaves.
Let us call a subset of an adic space~$X$ \emph{Zariski-constructible} if it is constructible with respect to the Zariski-topology.
This is the topology whose closed subsets are of the form
\[
 \{x \in X \mid |f(x)| = 0~\forall\,f \in \mathcal{I}_x\} = \{x \in X \mid \mathcal{I}_x \ne \cO_{X,x}\},
\]
 where~$\mathcal{I}$ is a coherent ideal of the structure sheaf of~$X$. 
In case $X = \Spa(Z,S)$ for a morphism of schemes $Z \to S$, this is the pullback of the Zariski topology on~$Z$
 via the support map $X \to Z$.

Note that in contrast to general locally closed constructible subsets the locally closed \emph{Zariski} constructible subsets carry the structure of an adic space.

\begin{definition}
 Let~$X$ be a discretely ringed adic space.
 A sheaf~$\mathcal{F}$ of $\Lambda$-modules on $X_{\tau}$ is horizontally constructible if for every point $x \in |X|$ there is a locally closed Zariski-constructible subset $L \subseteq |X|$ containing~$x$
  such that~$\mathcal{F}|_L$ is locally constant of finite type.
\end{definition}

The term horizontally constructible derives from the classification of specializations of points in an adic space.
Every specialization can be decomposed in horizontal and vertical specializations (primary and secondary in Huber's terminology).
The horizontally constructible sheaves~$\mathcal{F}$ have the property that for two geometric points~$\bar{x}$ and~$\bar{y}$ such that~$\bar{x}$ is a vertical specialization of~$\bar{y}$
 the specialization map $\mathcal{F}_{\bar{x}} \to \mathcal{F}_{\bar{y}}$ is an isomorphism.

We write $\Mod_c^h(X_{\tau},\Lambda)$ for the category of horizontally constructible $\Lambda$-modules on~$X_{\tau}$.
It is a thick subcategory of the category of $\Lambda$-modules on~$X_{\tau}$.
The pullback via an adic morphism $X \to Y$ takes horizontally constructible $\Lambda$-modules on~$Y_{\tau}$ to horizontally constructible $\Lambda$-modules on~$X_{\tau}$.

Let $f: X \to Y$ be a morphism in $Y_{\tau}$.
As for any site (see \cite{SGA4}, Expos\'ee~III, \S~5), we have the extension by zero functor
\[
 f_! : \Mod(X_{\tau},\Lambda) \longrightarrow \Mod(Y_{\tau},\Lambda)
\]
defined as the left adjoint of~$f^*$.
It is exact and takes a $\Lambda$-module~$\mathcal{F}$ on~$X_{\tau}$ to the sheaf associated with the presheaf
\[
 (U \to Y) \mapsto \bigoplus_{\varphi \in \Hom_Y(U,X)} \mathcal{F}(U \overset{\varphi}{\to} X).
\]
This construction is functorial in~$f$ and commutes with base change.
The stalk of $f_!\mathcal{F}$ at a geometric point $s:\bar{y} \to Y$ is isomorphic to
\[
 \bigoplus_{t \in \Hom_Y(\bar{y},X)} \mathcal{F}_t.
\]
If~$f$ is quasi-compact and quasi-separated, $f_!$ takes horizontally constructible sheaves to horizontally constructible sheaves.

\section{Cohomological dimension of horizontally constructible sheaves} \label{cohdim-sec}

\begin{definition}
 Let~$X$ be an adic space and~$p$ a prime.
 The (horizontal) cohomolo\-gical $p$-dimension $cd_p(X)$ ($\cd^h_p(X)$) of~$X$ is the maximal integer~$n$ such that there is a (horizontally) constructible $p$-torsion sheaf~$\mathcal{F}$ on~$X_{\tau}$ with $H^n(X,\mathcal{F}) \ne 0$.
\end{definition}

For the rest of this section we fix a separably closed field~$k$ of characteristic $p > 0$ and a curve~$C$ over~$k$, i.e., a one-dimensional scheme of finite type over~$k$.
We set $X = \Spa(C,k)$ and set out to determine the horizontal cohomological $p$-dimension of~$X$.
For our computation we will need to know beforehand that the horizontal cohomological $p$-dimension is finite.
As $\cd^h_p(X) \le \cd_p(X)$, this can be established by first determining the (general, not horizontal) cohomological $p$-dimension of~$X$.

\subsection{Cohomological dimension of proper closed subspaces}

At first we will need to compute  the cohomological dimension of proper closed subsets of~$X$.
Unfortunately, a closed subset of~$X$ is not necessarily an adic space but only a pseudo-adic space.
Remember that a prepseudo-adic space~$Y$ is a pair $(\underline{Y},|Y|)$, where~$\underline{Y}$ is an adic space and $|Y| \subset \underline{Y}$ is a subset.
A pseudo-adic space is a prepseudo-adic space $Y$ such that $|Y|$ is convex and locally pro-constructible in~$\underline{Y}$.
This is the case, for instance, if~$|Y|$ is closed in~$\underline{Y}$.
A morphism $f:Y \to Z$ of prepseudo-adic spaces is a morphism of adic spaces $\underline{Y} \to \underline{Z}$ (also denoted~$f$) such that $f(|Y|) \subseteq |Z|$.
It is called \'etale if the underlying morphism $\underline{Y} \to \underline{Z}$ is \'etale and tame (resp. strongly \'etale) if it is tame (resp. strongly \'etale) at all points $x \in |Y|$.
A family of \'etale (tame, strongly \'etale) morphisms $(Y_i \to Y)_{i \in I}$ of prepseudo-adic is a covering if $(|Y_i| \to |Y|)$ is a surjective family.

Let us get back to the proper closed subspaces of $X = \Spa(C,k)$.
We start by classifying the irreducible closed subsets.

\begin{lemma} \label{classification_irreducible_subspaces}
 There are four types of irreducible closed subsets of~$X$:
 \begin{enumerate}[(i)]
  \item $\{x\}$, where $x$ corresponds to the trivial valuation on the residue field of a closed point of~$C$,
  \item $\{v\}$, where $v$ corresponds to a $k$-valuation on a generic point of~$C$ which does not have center in~$C$,
  \item $\{v,x\}$, where $v$ corresponds to a $k$-valuation on a generic point of~$C$ with center $c \in C$ and~$x$ corresponds to the trivial valuation on~$k(c)$,
  \item $\Spa(Z,k)$, where $Z$ is an irreducible component of~$C$.
 \end{enumerate}
\end{lemma}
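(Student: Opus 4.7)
\emph{Proof plan.} The approach is to parametrize the points of $X$ via the continuous support map $\supp\colon X \to C$ and compute the closure of every point using the standard decomposition of specializations in adic spaces into horizontal (convex-subgroup coarsening of the value group) and vertical (composition with a valuation on the residue field of $R_v$) steps.

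A point $v \in X$ is represented by an equivalence class of valuations on $k(\supp v)$ trivial on $k$. Over a closed point $c \in C$, the residue field is algebraic over the separably closed field $k$, so only the trivial valuation occurs; write $x_c$ for the resulting point. Over the generic point $\eta_Z$ of an irreducible component $Z$, the residue field $k(Z)$ has transcendence degree one over $k$, so the valuations on $k(Z)$ trivial on $k$ are either the trivial one $\eta_Z$ itself or of rank one. For each irreducible component $Z$, the closed immersion $Z \hookrightarrow C$ induces a closed immersion $\Spa(Z,k) \hookrightarrow X$ with irreducible image having generic point $\eta_Z$, which gives case~(iv). Since the support of the generic point of any irreducible closed subset of $X$ lies in a single irreducible component of $C$, every other such subset is contained in some $\Spa(Z,k)$, and I may therefore assume $C$ is reduced and irreducible with function field $K = k(C)$.

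It then remains to compute the closures of the points $x_c$ and of the rank one valuations $v$ on $K$. The point $x_c$ is closed because $\{c\}$ is closed in $C$ and $\supp^{-1}(c) = \{x_c\}$, which gives case~(i). For a rank one valuation $v$ on $K$, the value group $\Z$ admits no proper nontrivial convex subgroup, so $v$ has no nontrivial horizontal specializations; moreover the residue field $R_v/\m_v$ is algebraic over $k$, hence equal to $k$, so the only candidate vertical specialization is the trivial valuation on $R_v/\m_v$. This candidate yields a point of $X$ precisely when $\cO_{C,c} \subseteq R_v$ for some closed $c \in C$, that is, when $v$ has a center in $C$, in which case the resulting point is $x_c$. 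Hence $\overline{\{v\}} = \{v, x_c\}$ (case~(iii)) when $v$ has a center in $C$ and $\overline{\{v\}} = \{v\}$ (case~(ii)) otherwise. Since every irreducible closed subset of $X$ is the closure of its unique generic point, the four listed cases are exhaustive. The main technical point is the dictionary between specializations in $\Spa(C,k)$ and operations on valuations, and in particular the verification that a rank one valuation without a center in $C$ really contributes a closed singleton and acquires no extra specializations from elsewhere in $X$.
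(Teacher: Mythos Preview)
Your proposal is correct and follows essentially the same approach as the paper: both arguments classify irreducible closed subsets as closures of points, case-split on whether the support is a closed or generic point of~$C$ and whether the valuation is trivial, and then determine the closure by checking whether a nontrivial rank-one valuation has a center. Your treatment is more explicit about the horizontal/vertical specialization dictionary, whereas the paper simply asserts the relevant specializations, but the underlying structure is identical. One minor imprecision: you write that the residue field $R_v/\m_v$ is ``hence equal to~$k$'', but~$k$ is only assumed separably closed in this section, so the residue field could be a purely inseparable extension; this does not affect the argument, since an algebraic extension still carries only the trivial $k$-valuation.
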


\begin{proof}
 Let~$Z \ne \varnothing$ be a closed irreducible subset of~$X$.
 Denote by~$z$ its generic point.
 If the support of~$z$ is a closed point of~$C$, the corresponding valuation has to be the trivial one and~$z$ is a closed point. We are then in case~(i).
 Suppose now that the support of~$z$ is a generic point of~$C$.
 Let us assume first that the corresponding valuation is not the trivial one.
 Hence, the valuation is discrete of rank one.
 If it does not have a center in~$C$, we are in case (ii).
 If it does have a center $c \in C$, the trivial valuation on~$c$ is contained in the closure of~$z$ and we are in case~(iii).
 Finally, if the valuation is the trivial one, we are in case (iv).
\end{proof}

Note that the underlying topological space of $X$ is noetherian.
This can be checked, for instance, by verifying that the irreducible closed subspaces listed above are constructible.
This is a very unusual property of an adic space and it is not true in higher dimension.

\begin{lemma} \label{classification_connected_subspaces}
 Let $Z$ be a connected closed subset of~$X$ whose complement is dense.
 Then it is either irreducible of type~(i) or~(ii) or of the form $\{v_1,\ldots,v_n,x\}$, where the~$v_i$ correspond to valuations on a generic point of~$C$ having all the same center~$c$
  and~$x$ corresponds to the trivial valuation on~$c$.
\end{lemma}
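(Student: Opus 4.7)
The plan is to combine the classification of irreducible closed subsets in \cref{classification_irreducible_subspaces} with a topological separation argument exploiting that each irreducible component of~$Z$ is closed in~$X$. First I would use the noetherianity of~$X$ noted after \cref{classification_irreducible_subspaces} to decompose $Z$ into finitely many irreducible components $Y_1,\dots,Y_m$, each of one of the four types listed in that lemma. The density of $X\setminus Z$ excludes type~(iv): if some $Y_j=\Spa(Y,k)$ for an irreducible component~$Y$ of~$C$, then $U=Y\setminus\bigcup_{Y'\neq Y}Y'$ is a nonempty open of~$C$, whose preimage under the support map is a nonempty open of~$X$ consisting only of valuations supported on~$Y$ and hence contained in $\Spa(Y,k)\subseteq Z$, contradicting density of the complement.

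Next I would peel off the two ``isolated'' cases. If some $Y_j=\{v\}$ is of type~(ii), then~$v$ has no center in~$C$ and therefore lies in none of the other $Y_i$ (for types~(i) and~(iii) the only trivial valuations present are on closed points of~$C$, while for type~(ii) the components have different generic supports). Hence $\bigcup_{i\neq j}Y_i$ is closed in~$X$, making $\{v\}$ clopen in~$Z$, and connectedness forces $Z=\{v\}$, which is case~(ii) of the lemma. The analogous argument applied to a type~(i) component $\{x'\}$ gives $Z=\{x'\}$ unless $x'$ coincides with the specialisation~$x_k$ of some type~(iii) component $\{v_k,x_k\}$ among the~$Y_i$.

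There remains the case in which every component is of type~(i) or~(iii); to each such component I associate its unique trivial-valuation point (the component itself for type~(i), its specialisation for type~(iii)), and let $T\subseteq Z$ be the resulting finite set. If $T$ contained two distinct points $x'\neq x''$, I would let $A$ be the union of those components attached to~$x'$ and $B=Z\setminus A$; both are finite unions of closed irreducible subsets of~$X$, hence clopen in~$Z$, contradicting connectedness. Therefore $T=\{x\}$, and every type~(iii) component has specialisation point~$x$, so that $Z=\{v_1,\dots,v_n,x\}$ with each~$v_i$ centred at~$x$. Conversely this configuration is connected since $v_i$ specialises to~$x$, so every open neighbourhood of~$x$ in~$X$ contains all of the~$v_i$; any clopen subset of~$Z$ containing~$x$ therefore exhausts~$Z$.

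The main obstacle I anticipate is the bookkeeping in the type~(iv) exclusion, where one must exhibit a genuinely nonempty open of~$X$ inside $\Spa(Y,k)$ using the fact that~$C$ has only finitely many irreducible components, together with the case analysis in the second step to ensure that no exotic mixture of type~(i), (ii) and~(iii) components slips through the separation argument.
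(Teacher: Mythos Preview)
Your proof is correct and follows the same approach as the paper: decompose~$Z$ into irreducible components via \cref{classification_irreducible_subspaces}, rule out type~(iv) by density of the complement, observe that one-point components are clopen in~$Z$ so connectedness forces them to exhaust~$Z$, and then argue that the remaining type~(iii) components must share a common centre. One small slip: distinct type~(ii) components need not have different generic supports (two centreless $k$-valuations can live on the same function field of~$C$), but this is harmless since they are still distinct points of~$X$ and your clopen separation goes through unchanged.
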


\begin{proof}
 The irreducible components of~$Z$ have to be of type~(i), (ii), or (iii).
 Subspaces of type~(i) and~(ii) consist of only one point.
 Hence, they cannot intersect non-trivially with other closed subspaces.
 If an irreducible component of~$Z$ is of type~(i) or~(ii), $Z$ has thus to be equal to it.
 It remains to treat the case where all irreducible components of~$Z$ are of type~(iii).
 As~$Z$ is connected, they have to intersect.
 It is then clear that~$Z$ has to be of the form $\{v_1,\ldots,v_n,x\}$ as described in the lemma.
\end{proof}

If $Z$ is of the form $\{v_1,\ldots,v_n,x\}$, we say (by abuse of notation) that it is of type~(iii).
In order to compute the cohomological dimension of the spaces of type~(i) and~(ii) the following lemma will be used.

\begin{lemma} \label{henselization}
 Let~$X$ be an adic space and $x \in X$ a point.
 Let $(k,k^+)$ be an affinoid field with henselization $(K,K^+)$.
 Suppose we are given an isomorphism $ \varphi: \Spa(k,k^+) \to \Spa(\kappa(x),\kappa(x)^+)$.
 Denote by $K^{\tau}$ a maximal separable, tamely ramified, or unramified extension of~$K$ (with respect to $K^+$) according to whether~$\tau$ is the \'etale, the tame or the strongly \'etale topology.
 Then~$\varphi$ induces an equivalence of topoi
 \[
  \Gal(K^{\tau}|K)\text{-Sets} \to \Shv(X,\{x\})_{\tau}.
 \]
\end{lemma}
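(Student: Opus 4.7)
The plan is to decompose the claimed equivalence into three successive steps.

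First, I would identify $\Shv(X,\{x\})_\tau$ with the topos of sheaves on the localization $X_x := \Spa(\kappa(x),\kappa(x)^+)$. Every quasi-compact $\tau$-cover of an open neighborhood of $x$ restricts to a $\tau$-cover of $X_x$, and conversely, by Huber's spreading-out results (\cite{Hu96}, 2.4.6), every $\tau$-morphism $Y \to X_x$ descends to a $\tau$-morphism $Y_U \to U$ over a quasi-compact open neighborhood $U$ of $x$. Combined with the compatibility of sheaves with cofiltered limits of coherent sites, this yields an equivalence
\[
 \Shv(X,\{x\})_\tau \simeq \Shv(X_x)_\tau.
\]

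Second, the isomorphism $\varphi$ identifies $X_x$ with $\Spa(k,k^+)$, and the henselization morphism $\Spa(K,K^+) \to \Spa(k,k^+)$ is a cofiltered limit of \'etale, respectively tame, respectively strongly \'etale neighborhoods by \cite{HueAd}, \S 6. A second application of the same limit argument gives
\[
 \Shv(\Spa(k,k^+))_\tau \simeq \Shv(\Spa(K,K^+))_\tau.
\]
Third, since $(K,K^+)$ is henselian, every connected finite $\tau$-cover of $\Spa(K,K^+)$ is of the form $\Spa(L,L^+)$ with $L$ a finite separable (respectively tamely ramified, respectively unramified) extension of $K$ with respect to $K^+$, and $L^+$ the integral closure of $K^+$ in $L$. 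Sending $\Spa(L,L^+) \mapsto \Hom_K(L,K^\tau)$ is then an equivalence between the category of finite $\tau$-covers of $\Spa(K,K^+)$ and the category of finite continuous $\Gal(K^\tau|K)$-sets, compare \cite{HueAd}, \S 4. Passing from finite covers to sheaves upgrades this to the desired equivalence
\[
 \Shv(X,\{x\})_\tau \simeq \Gal(K^\tau|K)\text{-Sets}.
\]

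The main technical obstacle is the first step. The classical analogue for schemes asserts that the \'etale topos of a single point $x$ of a scheme equals the \'etale topos of $\Spec\kappa(x)$; the analogue for pseudo-adic spaces requires careful bookkeeping with Huber's formalism, since one must check that neither shrinking the open neighborhoods of $x$ nor restricting to the pseudo-adic subspace $\{x\}$ loses information at the level of $\tau$-sheaves. This amounts to a coherence and limit argument in the spirit of \cite{Hu96}, Chapter~2, together with compatibility of the $\tau$-sites with such limits, in particular for the tame variant where the definitions of \cite{HueAd}, \S 3 must be used to verify that tameness of an individual morphism propagates to and from the limit. Once this is in place, steps two and three are standard henselian Galois theory in the adic setting.
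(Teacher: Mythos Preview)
Your three-step decomposition (localize to the residue field, pass to the henselization, apply Galois theory) is exactly the paper's approach: the paper compresses the first two steps into a single citation of \cite{Hu96}, Proposition~2.3.10 for the chain of pointed pseudo-adic spaces $(\Spa(K,K^+),\bar{x}) \to (\Spa(k,k^+),x) \to (\Spa(\kappa(x),\kappa(x)^+),x) \to (X,x)$, remarks that the tame and strongly \'etale cases go through verbatim, and then invokes the standard argument from SGA4, Expos\'e~VIII for the Galois step. The only cosmetic difference is that you phrase things in terms of the full local adic spaces $\Spa(\kappa(x),\kappa(x)^+)$ and $\Spa(K,K^+)$ rather than the single-point pseudo-adic subspaces; since tameness and strong \'etaleness at the closed point propagate to all its generalizations and a cover of the closed point is a cover of the whole local space, this does not change the resulting topoi.
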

\begin{proof}
 Denote by~$\bar{x}$ the closed point of $\Spa(K,K^+)$
 We have the following chain of morphisms of pseudo-adic spaces:
 \[
  (\Spa(K,K^+),\bar{x}) \to (\Spa(k,k^+),x) \overset{\varphi}{\to} (\Spa(\kappa(x),\kappa(x)^+),x) \to (X,x)
 \]
 In \cite{Hu96}, Proposition 2.3.10 it is proved that the associated morphism of \'etale topoi is an equivalence.
 The proof for the tame and strongly \'etale topoi is literally the same.
 Moreover, the standard argument (see \cite{SGA4}, Expos\'e~viii, \S 2) shows that we have an equivalence of topoi
 \[
  \Gal(K^{\tau}|K)\text{-Sets} \to \Shv(\Spa(K,K^+),\bar{x})_{\tau}. \qedhere
 \]
\end{proof}

%



\begin{proposition} \label{cd_proper_subspace}
 The cohomological $p$-dimension with respect to $\tau \in \{\set,t\}$ of a connected closed subspace of~$X$ is~$0$ if it is of type~(i) or~(ii), and less or equal to~$1$ if it is of type~(iii).
\end{proposition}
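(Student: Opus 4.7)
The plan is to handle the three topological types separately: types~(i) and~(ii) are single-point pseudo-adic spaces, while type~(iii) is glued from them over the closed point~$x$.

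For types~(i) and~(ii), I would apply \cref{henselization} to identify the $\tau$-topos of the single-point space with the topos of continuous $\Gal(K^\tau|K)$-sets for the appropriate henselian affinoid field $(K,K^+)$. In both types the residue field of the associated valuation is algebraic over the separably closed base field~$k$, hence purely inseparable over~$k$, hence itself separably closed. This forces $K^{\set}=K$, so that $\Gal(K^{\set}|K)$ is trivial; and forces $\Gal(K^t|K)$ to be the pro-prime-to-$p$ tame inertia (extended by the trivial absolute Galois group of the residue field). In either case the $\tau$-cohomology of any $p$-torsion sheaf vanishes in positive degrees, so $\cd_p^\tau = 0$.

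For type~(iii), write $Z=\{v_1,\dots,v_n,x\}$ with open complement $U=\{v_1,\dots,v_n\}$. The classification in \cref{classification_irreducible_subspaces} shows that the irreducible closed subsets of $Z$ are $\{x\}$ and $\{v_i,x\}$, so each $\{v_i\}$ is open in $U$ and $U=\bigsqcup_i \{v_i\}$ carries the discrete topology. Thus case~(ii) gives $\cd_p^\tau(U)=0$, and feeding this into the localisation long exact sequence
\[
\cdots \to H^{i-1}(U,\mathcal{F}|_U) \to H^i_x(Z,\mathcal{F}) \to H^i(Z,\mathcal{F}) \to H^i(U,\mathcal{F}|_U) \to \cdots
\]
reduces the statement to the vanishing of $H^i(Z,\mathcal{F})$ for $i\ge 2$. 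To prove this I would realise $Z$ as a closed pseudo-adic subspace of $Y=\Spa(A,A)$, where $A=\O_{C,c}^h$ is the henselisation of the reduced local ring of~$C$ at the common centre~$c$ of the $v_i$. Because $\kappa(c)$ is purely inseparable over the separably closed field~$k$, it is itself separably closed, so $A$ is strictly henselian; moreover the valuation associated to $(A,A)$ is trivial and hence has trivial, in particular $\Z[\tfrac{1}{p}]$-divisible, value group. Therefore $(A,A)$ is both strongly and tamely henselian, and as in the proof of \cref{cohdimsupp} this yields $H^i(Y_\tau,\mathcal{G}) = 0$ for every sheaf~$\mathcal{G}$ and $i\ge 1$. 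Since the pushforward along a closed immersion of pseudo-adic spaces is exact and preserves $\tau$-cohomology, one gets $H^i(Z_\tau,\mathcal{F}) = H^i(Y_\tau,i_*\mathcal{F}) = 0$ for all $i\ge 1$, which in particular gives the desired bound.

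The step I expect to require the most care is the final identification $H^i(Z_\tau,\mathcal{F}) = H^i(Y_\tau, i_*\mathcal{F})$ along the closed immersion of pseudo-adic spaces: this is the expected analogue of the scheme-theoretic fact that closed immersions have exact pushforward preserving cohomology, but one has to verify that the pseudo-adic $\tau$-site of $Z$ is controlled by the $\tau$-site of the ambient~$Y$, concretely that any $\tau$-cover of $Z$ extends, in a neighbourhood of each point of $|Z|$, to a $\tau$-morphism into~$Y$.
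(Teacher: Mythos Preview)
Your treatment of types~(i) and~(ii) matches the paper's: both use \cref{henselization} and the observation that the residue field of the relevant valuation is separably closed, so the strongly \'etale Galois group is trivial and the tame Galois group is pro-prime-to-$p$.

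For type~(iii) your route diverges. The paper stays inside $(X,Z)$ and uses the open embedding $j:(U,\{v_1,\dots,v_n\})\hookrightarrow (X,Z)$ with $U=X\setminus\{x\}$: the adjunction $\mathcal F\to j_*j^*\mathcal F$ has kernel and cokernel supported on the acyclic point $(X,x)$, and the Leray spectral sequence for~$j$ (together with $\cd_p(U,\{v_i\})=0$) forces $H^r((X,Z),j_*j^*\mathcal F)=0$ for $r\ge 1$; splicing the four-term sequence then gives vanishing for $r\ge 2$. Your idea instead is to pass to the henselisation $Y=\Spa(A,A)$, use that $(A,A)$ is strongly and tamely henselian, and invoke exactness of pushforward along the closed immersion $(Y,Z')\hookrightarrow Y$. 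If the identifications go through, your argument actually yields the stronger bound $\cd_p^\tau(Z)=0$, not just $\le 1$.

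Two points to tighten. First, the sentence about the localisation sequence is circular as written: the vanishing of $H^i(Z,\mathcal F)$ for $i\ge 2$ \emph{is} the assertion, so the sequence reduces nothing---and indeed your subsequent argument does not use it. Second, and more substantively, you silently replace the closed subset $Z\subset X$ by a closed subset $Z'\subset Y$. The pseudo-adic space in the proposition is $(X,Z)$, not $(Y,Z')$; to move to~$Y$ you need that the pro-\'etale morphism $(Y,Z')\to(X,Z)$ induces an equivalence of $\tau$-topoi (or at least an isomorphism on cohomology). This is the analogue of excision under henselisation and is separate from the closed-immersion fact you flag at the end; it deserves the same care. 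The paper's intrinsic argument sidesteps both of these verifications at the cost of the weaker bound $\le 1$, which is all that is needed downstream.
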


\begin{proof}
 Let $\{x\}$ be a closed subspace of type~(i).
 The residue field $k(x)$ is separably closed.
 Hence, by \cref{henselization} the topos $\Shv(X,x)_{\tau}$ is trivial.
 For a closed subspace $\{v\}$ of type~(ii) let $(K,K^+)$ denote the henselization of $(\kappa(x),\kappa(x)^+)$.
 The residue field of~$K^+$ is the same as the one of~$\kappa(x)^+$, which is separably closed.
 Thus, $(K,K^+)$ is even strictly henselian.
 Using \cref{henselization} we conclude that $\Shv(X,v)_{\set}$ is trivial.
 Moreover, the cohomological $p$-dimension of $(X,v)$ in the tame topology is zero because $\Gal(K^t|K)$ has trivial $p$-Sylow subgroup.

 Now take a closed subspace $\{v_1,\ldots,v_n,x\}$ of type~(iii) and an abelian $p$-torsion sheaf~$\mathcal{F}$ on $(X,\{v_1,\ldots,v_n,x\})_{\tau}$.
 Consider the open subspace $(U,\{v_i,\ldots,v_n\})$, where
 \[
  U = X-\{x\} = \Spa(C-\{\supp x\},k).
 \]
 and denote by
 \[
  j : (U,\{v_i,\ldots,v_n\}) \to (X,\{v_1,\ldots,v_n,x\})
 \]
 the canonical embedding.
 We obtain an exact sequence
 \[
  0 \longrightarrow \mathcal{G} \longrightarrow \mathcal{F} \longrightarrow j_* j^* \mathcal{F} \longrightarrow \mathcal{H} \longrightarrow 0.
 \]
 The sheaves~$\mathcal{G}$ and~$\mathcal{H}$ have support in $(X,x)$, hence trivial cohomology.
 In order to compute the cohomology of $j_*j^* \mathcal{F}$, consider the Leray spectral sequence associated to~$j$:
 \[
  H^r((X,\{v_1,\ldots,v_n,x\}),R^sj_*(j^*\mathcal{F})) \Rightarrow H^{r+s}((U,\{v_i,\ldots,v_n\}),j_*j^*\mathcal{F}).
 \]
 We have seen above that the cohomological dimension of $(U,\{v_1,\ldots,v_n\})$ is~$0$.
 Therefore, the right hand side vanishes unless $r=s=0$ and $R^sj_*(j^*\mathcal{F}) = 0$ for $s \ge 1$.
 This shows that
 \[
  H^r((X,\{v_1,\ldots,v_n,x\}),j_*j^*\mathcal{F}) = 0
 \]
 for $r \ge 1$.

 We split the above sequence into two short exact sequences
 \[
  0 \longrightarrow \mathcal{G} \longrightarrow \mathcal{F} \longrightarrow \mathcal{J} \longrightarrow 0
 \]
 and
 \[
  0 \longrightarrow \mathcal{J} \longrightarrow j_*j^*\mathcal{F} \longrightarrow \mathcal{H} \longrightarrow 0.
 \]
 Using that $\mathcal{G}$, $\mathcal{H}$, and $j_*j^*\mathcal{F}$ are cohomologically trivial, the long exact cohomology sequences shows that
 \[
  H^r((X,\{v_1,\ldots,v_n,x\}),\mathcal{F}) = 0
 \]
 for $r \ge 2$.
\end{proof}

\subsection{Cohomological dimension of curves}

In this subsection we give a bound for the cohomological $p$-dimension of $X = \Spa(C,k)$.
The strategy of the proof is the same as for the \'etale cohomological $p$-dimension of schemes in case $p$ is invertible (see \cite{SGA4}, Expos\'{e}~X, \S~4).

\begin{proposition} \label{cd_constructible}
 The cohomological $p$-dimension of~$X$ is less or equal to~$3$.
\end{proposition}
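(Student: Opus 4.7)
The plan is to follow the template of SGA4 Expos\'e~X, \S4, combining a devissage of constructible sheaves with an Artin--Schreier computation for the constant sheaf $\F_p$.

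First I would perform the standard devissage. Every constructible $p$-torsion sheaf $\mathcal{F}$ on $X_\tau$ admits a finite filtration whose successive quotients are of the form $\iota_!\mathcal{L}$, where $\iota\colon W\hookrightarrow X$ is a locally closed immersion and $\mathcal{L}$ is locally constant constructible on $W_\tau$. By \cref{classification_irreducible_subspaces} the closure $\overline{W}$ is either all of $X$ or a finite union of proper closed subspaces of types~(i)--(iii); in the latter case \cref{cd_proper_subspace} gives $\cd_p(\overline{W})\le 1$, so that the corresponding contribution to $H^n(X,\mathcal{F})$ vanishes in degrees $\ge 2$ and in particular in degrees $\ge 4$. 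One is therefore reduced to bounding $H^n(X,j_!\mathcal{L})$ for $j\colon U\hookrightarrow X$ a dense open immersion and $\mathcal{L}$ locally constant constructible on $U_\tau$. Passing to a finite Galois $\tau$-cover of $U$ trivializing $\mathcal{L}$ and applying Hochschild--Serre (which for $\tau\in\{t,\set\}$ is harmless, as the relevant Galois groups have order prime to $p$) one may further take $\mathcal{L}=\F_p$.

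For the constant sheaf $\F_p$ I would invoke the Artin--Schreier exact sequence
\[
 0\to\F_p\to\cO_X\xrightarrow{F-1}\cO_X\to 0
\]
of sheaves on $X_\et$. Since $X=\Spa(C,k)$ is discretely ringed, the support map $\varphi\colon X\to C$ identifies $H^n(X_\et,\cO_X)$ with the coherent cohomology $H^n(C,\cO_C)$, which vanishes for $n\ge 2$ because $C$ is a curve. The long exact sequence coming from Artin--Schreier then yields $H^n(X_\et,\F_p)=0$ for $n\ge 3$. For $\tau\in\{t,\set\}$ one transfers this bound to $X_\tau$ via the morphism of topoi $X_\et\to X_\tau$, using that the higher \'etale cohomology of the tame (resp.\ strongly \'etale) localizations of $X$ vanishes by the strict-henselization computations implicit in \cref{cohdimsupp}. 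Combined with the devissage this yields $\cd_p(X)\le 3$.

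The main obstacle is the bookkeeping in the devissage step: one must ensure that the excision spectral sequence $H^p(Z,\mathcal{H}^q_Z(\mathcal{F}))\Rightarrow H^{p+q}_Z(X,\mathcal{F})$ associated to a proper closed stratum $Z$ converges in degrees $\le 3$, which requires the local cohomology sheaves $\mathcal{H}^q_Z(X,\cdot)$ to vanish for $q$ sufficiently large; this is precisely where the strict-henselization analysis of \cref{cohdimsupp} enters. The delicate point is to make sure the successive degree shifts from Hochschild--Serre, excision, and Artin--Schreier compose to the clean bound $3$ rather than to something larger; this is controlled by the fact that for $\tau\in\{t,\set\}$ the Hochschild--Serre step contributes no shift at all, so that the only degree shifts are the one inherent in Artin--Schreier and the one coming from $\cd_p(Z)\le 1$ against the two-dimensional structure of $X$.
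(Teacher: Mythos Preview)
Your Hochschild--Serre reduction contains a genuine error: the claim that for $\tau\in\{t,\set\}$ the Galois group of a trivializing $\tau$-cover has order prime to~$p$ is false. Tameness and strong \'etaleness are conditions on the ramification at the valuation points, not on the degree of the cover. For instance, if $C$ is a smooth projective curve of genus $\ge 1$ over an algebraically closed field $k$ of characteristic~$p$, then any nontrivial \'etale $\Z/p\Z$-cover $C'\to C$ (and there are such, since $H^1(C_{\et},\F_p)\ne 0$) induces a strongly \'etale morphism $\Spa(C',k)\to\Spa(C,k)$: every valuation on $k(C)$ has a center on~$C$, and an \'etale morphism of schemes is unramified at every centered valuation. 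So a locally constant constructible $p$-torsion sheaf on $X_\tau$ may well require a $\tau$-cover of $p$-power degree to trivialize, and the Hochschild--Serre spectral sequence for such a cover contributes in arbitrarily high degree. Your reduction to $\mathcal{L}=\F_p$ therefore does not go through, and the bound~$3$ is not established.

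A second, smaller issue: the transfer of the Artin--Schreier bound from $X_{\et}$ to $X_\tau$ via the morphism of topoi $\epsilon\colon X_{\et}\to X_\tau$ runs the wrong way. The Leray spectral sequence expresses $H^n(X_{\et},\F_p)$ in terms of $H^p(X_\tau,R^q\epsilon_*\F_p)$; knowing the abutment vanishes for $n\ge 3$ does not by itself bound $H^n(X_\tau,\F_p)=E_2^{n,0}$, because there can be nonzero differentials landing on the bottom row. One would need control over $R^q\epsilon_*\F_p$ for $q\ge 1$, and \cref{cohdimsupp} does not supply this directly.

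The paper's argument sidesteps both problems by working with the morphism $\iota\colon\Spa(A,A)\to X$ from the generic points rather than with a trivializing cover. The key numerical input is that a field of characteristic~$p$ has Galois $p$-cohomological dimension $\le 1$, so $R^s\iota_*$ vanishes for $s\ge 2$, and $R^1\iota_*$ is supported on proper closed subspaces, where \cref{cd_proper_subspace} applies. This is exactly the structure of the proof in SGA4, Expos\'e~X, \S4, and it never needs to reduce to the constant sheaf.
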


\begin{proof}
 Take a constructible $p$-torsion sheaf~$\mathcal{F}$ on~$X_{\tau}$.
 Let $A \hookrightarrow C$ be the inclusion of the generic points of~$C$.
 It induces a morphism $\iota: \Spa(A,A) \to X$.
 We consider the adjunction homomorphism $\mathcal{F} \to \iota_*\iota^*\mathcal{F}$ and define~$\mathcal{G}$ and~$\mathcal{H}$ to be its kernel and cokernel.
 In other words, we have an exact sequence
 \begin{equation} \label{adjunction_kernel_cokernel}
  0 \to \mathcal{G} \to \mathcal{F} \to \iota_*\iota^*\mathcal{F} \to \mathcal{H} \to 0.
 \end{equation}
 The stalk of the adjunction morphism in the middle is an isomorphism for any geometric point lying over the generic points $\Spa(A,A)$ of~$X$.
 Hence, $\mathcal{G}$ and~$\mathcal{H}$ have support in a proper closed subspace and thus their cohomology vanishes in degrees $\ge 2$ (see \cref{cd_proper_subspace}).

 In order to examine the cohomology of $\iota_*\iota^*\mathcal{F}$ we consider the Leray spectral sequence associated with~$\iota$:
 \[
  H^r(X,R^s\iota_*\iota^*\mathcal{F}) \Rightarrow H^{r+s}(\Spa(A,A),\iota^*\mathcal{F}).
 \]
 The adic space $\Spa(A,A)$ is a finite disjoint union of spaces of the form $\Spa(K,K)$, where~$K$ is a field of characteristic $p$.
 The $\tau$-cohomology of these spaces is (independently of $\tau\in\{\et, t, \set\}$) given as the group cohomology of the absolute Galois group of $K$ (see \cite{HueAd}, Lemma~8.1).
 The Galois cohomology of a field of characteristic~$p$ with $p$-torsion coefficients vanishes in degrees greater or equal to~$2$ (see \cite{NSW}, Proposition~6.5.10).
 Therefore, the right hand side of the above spectral sequence vanishes for $r+s \ge 2$.
 By the same reason $R^s\iota_*\iota^*\mathcal{F} = 0$ for $s \ge 2$.
 Moreover, $R^1\iota_*\iota^*\mathcal{F}$ can be written as a filtered colimit of constructible sheaves with support in a proper closed subspace.
 As~$X_{\tau}$ is a noetherian site ($X$ being quasi-compact), cohomology commutes with colimits (see \cite{SGA4}, Expos\'ee~VI, Corollaire~5.3).
 Therefore, the left hand side vanishes for $s=1$ and $r \ge 2$.
 With this information, we obtain that
 \[
  H^r(X,\iota_*\iota^*\mathcal{F}) = 0
 \]
 for $r \ge  4$.

 Let us go back to sequence (\ref{adjunction_kernel_cokernel}).
 We know that the cohomology of~$\mathcal{G}$ and~$\mathcal{H}$ vanishes in degrees greater or equal to~$2$ and we just saw that the cohomology of~$\iota_*\iota^*\mathcal{F}$ vanishes in degrees greater or equal to~$4$.
 Splitting the sequence into two short exact sequences and examining the corresponding long exact cohomology sequences, we conclude that $H^r(X,\mathcal{F}) = 0$ for $r \ge 4$.
\end{proof}

\subsection{Computation of the horizontal $p$-dimension}

In the following, by a $\tau$-covering we mean a finite morphism of adic spaces $Y' \to Y$ in $Y_{\tau}$.

\begin{lemma} \label{cohdim_loc_const}
 Let~$Y$ be a noetherian adic space of finite horizontal $p$-dimension for some prime~$p$.
 For an integer~$n$ the following are equivalent:
 \begin{enumerate}[(i)]
  \item For every locally constant constructible $p$-torsion sheaf~$\mathcal{F}$ and every $i >n$ we have
	\[
	 H^i(Y,\mathcal{F}) = 0.
	\]
  \item For every $\tau$-covering $Y' \to Y$ and every $i > n$ we have
	\[
	 H^i(Y',\F_p) = 0.
	\]
 \end{enumerate}
\end{lemma}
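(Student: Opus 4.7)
The plan is to prove the two implications separately; (i)~$\Rightarrow$~(ii) is essentially formal, whereas (ii)~$\Rightarrow$~(i) requires a dimension-shifting argument which uses the finiteness of $\cd^h_p(Y)$ in an essential way.

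For (i)~$\Rightarrow$~(ii), I would let $\pi\colon Y'\to Y$ be a $\tau$-covering and observe that the pushforward $\pi_*\F_p$ is a locally constant constructible $p$-torsion sheaf on $Y_\tau$: $\tau$-locally on $Y$ the covering $\pi$ splits into a disjoint union of copies of the base, so $\pi_*\F_p$ becomes locally constant with finite stalks. Since $\pi$ is finite, $R^j\pi_*=0$ for $j>0$, and Leray gives $H^i(Y',\F_p)\cong H^i(Y,\pi_*\F_p)$, which vanishes for $i>n$ by~(i).

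For (ii)~$\Rightarrow$~(i), let $\mathcal F$ be a locally constant constructible $p$-torsion sheaf on $Y_\tau$. The crucial construction is a surjection $\pi_*\F_p\twoheadrightarrow\mathcal F$ for some $\tau$-covering $\pi\colon Y'\to Y$, where $Y'$ is permitted to be a finite disjoint union of connected covers of the components of $Y$. Decomposing $Y$ into its finitely many connected components and working with each separately, $\mathcal F$ corresponds to a finite $p$-group $M$ with continuous action of $\pi_1^\tau(Y)$; for generators $m_1,\ldots,m_r$ of $M$, the open stabilizers $H_i$ determine $\tau$-coverings $\pi_i\colon Y_i\to Y$ whose pushforwards $(\pi_i)_*\F_p$ correspond to the permutation modules $\F_p[\pi_1^\tau(Y)/H_i]$. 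The canonical maps $gH_i\mapsto g\cdot m_i$ translate back into sheaf maps $(\pi_i)_*\F_p\to\mathcal F$ which together form the desired surjection with $\pi=\bigsqcup\pi_i$. Setting $\mathcal K=\ker(\pi_*\F_p\twoheadrightarrow\mathcal F)$, this $\mathcal K$ is again locally constant constructible and $p$-torsion, and by~(ii) together with the Leray collapse one has $H^i(Y,\pi_*\F_p)=H^i(Y',\F_p)=0$ for $i>n$, so the long exact sequence produces injections $H^i(Y,\mathcal F)\hookrightarrow H^{i+1}(Y,\mathcal K)$ for $i>n$. Applying the same construction to $\mathcal K$, and iterating, yields a chain
\[
H^i(Y,\mathcal F)\hookrightarrow H^{i+1}(Y,\mathcal K_1)\hookrightarrow H^{i+2}(Y,\mathcal K_2)\hookrightarrow\cdots
\]
in which each $\mathcal K_j$ is locally constant constructible, hence a fortiori horizontally constructible; the hypothesis $\cd^h_p(Y)<\infty$ then forces $H^{i+j}(Y,\mathcal K_j)=0$ once $j$ is large, whence $H^i(Y,\mathcal F)=0$.

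The main substantive point is the construction of the surjection $\pi_*\F_p\twoheadrightarrow\mathcal F$ via the correspondence between locally constant constructible sheaves and finite discrete $\pi_1^\tau$-modules; everything else is a routine dimension-shift combined with the Leray collapse for finite morphisms and the finiteness of the horizontal cohomological $p$-dimension.
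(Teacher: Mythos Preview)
Your proof is correct and follows the same overall strategy as the paper: for (i)~$\Rightarrow$~(ii) push forward the constant sheaf along the finite covering and use Leray, and for (ii)~$\Rightarrow$~(i) cover $\mathcal F$ by a sheaf whose cohomology is controlled by~(ii), then dimension-shift using the finiteness of $\cd^h_p(Y)$.

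The only genuine difference is in how the surjection onto $\mathcal F$ is produced. The paper takes a single $\tau$-covering $\pi\colon Y'\to Y$ trivializing $\mathcal F$ and uses the counit $\pi_!\pi^*\mathcal F=\pi_*\pi^*\mathcal F\twoheadrightarrow\mathcal F$; since $\pi^*\mathcal F$ is constant, $H^i(Y,\pi_*\pi^*\mathcal F)=H^i(Y',\pi^*\mathcal F)$ vanishes for $i>n$ after a preliminary d\'evissage extending~(ii) from $\F_p$ to arbitrary constant $p$-torsion coefficients. Your construction via the $\pi_1^\tau$-correspondence and permutation modules is equally valid and has the small advantage that the middle term is literally a direct sum of sheaves $(\pi_i)_*\F_p$, so~(ii) applies without the extra d\'evissage; on the other hand it presupposes the equivalence between locally constant constructible sheaves and finite $\pi_1^\tau$-sets on a connected noetherian adic space, which the paper's adjunction argument sidesteps entirely. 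Either way the resulting dimension-shift and the use of $\cd^h_p(Y)<\infty$ are identical.
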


\begin{proof}
 Without loss of generality we may assume that~$Y$ is connected.
 Assume that~(i) holds and take a $\tau$-covering $\pi:Y' \to Y$ of degree~$d$. Then for every $i$ we have
 \[
  H^i(Y',\F_p) = H^i(Y,\pi_*\F_p).
 \]
 But $\pi^*\pi_*\F_p \cong (\F_p)^d$ is constant, hence $\pi_*(\Z/ p\Z)$ is locally constant constructible.
 Therefore, by assumption the above cohomology group vanishes for $i > n$.

 Now assume~(ii).
 First observe that by d\'evissage we know that for every $\tau$-covering
 \[
  H^i(Y',M) = 0
 \]
 for all constant $p$-torsion sheaves~$M$ and all $i > n$.
 Let~$\mathcal{F}$ be a locally constant constructible $p$-torsion sheaf on~$Y_{\tau}$ and $\pi:Y' \to Y$ a $\tau$-covering such that $\pi^*\mathcal{F}$ is constant.
 The adjunction map $\pi_!\pi^*\mathcal{F} \to \mathcal{F}$ is surjective, which can be checked on stalks using that~$\pi$ is surjective.
 As~$\pi$ is finite, $\pi_!\pi^*\mathcal{F} = \pi_*\pi^*\mathcal{F}$, which is locally constant by the same argument as before.
 We thus have a short exact sequence of locally constant constructible $p$-torsion sheaves
 \[
  0 \longrightarrow \mathcal{G} \longrightarrow \pi_*\pi^*\mathcal{F} \longrightarrow \mathcal{F} \longrightarrow 0.
 \]
 Using that $H^i(Y,\pi_*\pi^*\mathcal{F}) = H^i(Y',\pi^*\mathcal{F}) = 0$ for $i>n$ we obtain
 \[
  H^i(Y,\mathcal{F}) \overset{\sim}{\longrightarrow} H^{i+1}(Y,\mathcal{G})
 \]
 for $i>n$.
 In particular, $H^i(Y,\mathcal{F}) = 0$ for $n < i \le \cd_p^h(Y)$.
 As this holds for all locally constant horizontally constructible $p$-torsion sheaves, we also have $H^i(Y,\mathcal{G}) = 0$ for $n < i \le \cd_p^h(Y)$.
 By descending induction we obtain the result.
\end{proof}

We can now use \cref{cohdim_loc_const} to determine the horizontal  cohomological dimension of $X = \Spa(C,k)$.

\begin{lemma} \label{coh_dim_loc_const_curves}
 For $\tau \in \{t,\set\}$ and every locally constant constructible $p$-torsion sheaf~$\mathcal{F}$ on~$X_{\tau}$ we have
 \[
  H^i(X_{\tau},\mathcal{F}) = 0
 \]
 for $i \ge 2$.
\end{lemma}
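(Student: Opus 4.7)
The plan is to apply \cref{cohdim_loc_const} to reduce the lemma to $\F_p$-coefficients on finite $\tau$-coverings of $X$, and then to use the purity result \cref{purity} to identify the cohomology with that of the smooth projective model, where the classical Artin--Schreier computation takes over. To invoke \cref{cohdim_loc_const} I first check its finiteness hypothesis: $\cd^h_p(X) \le \cd_p(X) \le 3$ by \cref{cd_constructible}. Thus it suffices to show $H^i(Y',\F_p) = 0$ for $i \ge 2$ and every finite $\tau$-covering $Y' \to X$. Any such $Y'$ is of the form $\Spa(C',k)$ for a curve $C'$ finite over $C$, so it has exactly the shape we are dealing with and it is enough to prove the vanishing directly for $X$.

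The main input is purity: by \cref{purity} (with $s = 0 \ne 1$) we have $H^i_x(X,\F_p) = 0$ for every closed point $x$ of $C$ and every $i \ge 0$. The excision long exact sequence accordingly yields $H^i(X,\F_p) \cong H^i(X \setminus S, \F_p)$ for every finite set $S$ of closed points of $C$. Letting $S$ exhaust all closed points and using that $\tau$-cohomology commutes with cofiltered limits of adic spaces (\cite{HueAd}, Proposition~5.3), I obtain $H^i(X,\F_p) \cong H^i(\Spa(K,k),\F_p)$, where $K$ is the function field of $C$ and $\Spa(K,k)$ is the Riemann--Zariski space of $K/k$. Applied identically to the smooth projective model $\tilde C$ of $C$, the same argument produces $H^i(\Spa(\tilde C,k),\F_p) \cong H^i(\Spa(K,k),\F_p)$, so combining we get $H^i(X,\F_p) \cong H^i(\Spa(\tilde C,k)_\tau,\F_p)$. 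Since $\tilde C$ is smooth and proper over the perfect field $k$, tame and strongly \'etale covers of $\Spa(\tilde C,k)$ reduce to \'etale covers of the underlying scheme, and therefore $H^i(\Spa(\tilde C,k)_\tau,\F_p) = H^i(\tilde C_{\et},\F_p)$, which vanishes for $i \ge 2$ by the Artin--Schreier short exact sequence combined with $H^i(\tilde C,\O_{\tilde C}) = 0$ for $i \ge 2$.

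The main obstacle is the justification of the identification between the $\tau$-cohomology of the smooth projective adic space $\Spa(\tilde C,k)$ and the classical \'etale cohomology of the scheme $\tilde C$; this should hold because on a smooth proper curve over a perfect field there are no closed points at which the three topologies can differ, but writing this out carefully requires care. A secondary subtlety is that the proof of \cref{purity} makes essential use of local freeness of $\Omega^1$, so if the covering curve $C'$ inherited from a $\tau$-covering has singular closed points the purity statement must be established separately at those points, for instance by first reducing to the smooth locus or by passing through the normalization before performing the excision step.
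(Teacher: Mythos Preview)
Your approach is correct and follows the same overall strategy as the paper: reduce via \cref{cohdim_loc_const} to showing $H^i(X',\F_p)=0$ for $i\ge 2$ and every $\tau$-covering $X'\to X$, then invoke purity for $\F_p$ to compare with the smooth projective model, and finish with the classical vanishing.  The paper's execution is more direct, though: rather than stripping out \emph{all} closed points and passing through the Riemann--Zariski space $\Spa(K,k)$ via a cofiltered limit, it simply applies a single excision for the open immersion $X'\hookrightarrow\bar X'=\Spa(\bar C',k)$ into the smooth compactification of $C'$; the complement is a finite set of closed points, and \cref{purity} kills the cohomology with supports in one stroke.  Your route lands in the same place but makes an unnecessary detour.

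Both of your stated obstacles dissolve.  The identification $H^i(\Spa(\tilde C,k)_\tau,\F_p)\cong H^i(\tilde C_{\et},\F_p)$ for proper $\tilde C$ is precisely \cite{HueAd}, Corollary~8.7 (the comparison of tame/strongly \'etale cohomology on the adic space with \'etale cohomology of the scheme in the proper case), which the paper cites at exactly this step.  And a $\tau$-covering is by definition \'etale, so $C'\to C$ is \'etale and $C'$ is smooth whenever $C$ is; there are no singular points on $C'$ to worry about, and \cref{purity} applies to $C'$ without modification.
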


\begin{proof}
 We verify condition~(ii) in \cref{cohdim_loc_const}.
 Let $X' \to X$ be a $\tau$-covering.
 It comes from an \'etale covering $C' \to C$, where~$C'$ is a smooth curve.
 We denote by~$\bar{C}'$ the smooth compactification of~$C'$ and set $\bar{X}'=\Spa(\bar{C}',k)$.
 Writing~$Z'$ for the complement of~$X'$ in~$\bar{X}'$ we consider the excision sequence
 \[
  \ldots \to H^i_{Z'}(\bar{X}'_{\tau},\F_p) \to H^i(\bar{X}'_{\tau},\F_p) \to H^i(X'_{\tau},\F_p) \to \ldots
 \]
 By purity (\cref{purity}), the cohomology groups with support $H^i_{Z'}(\bar{X}',\F_p)$ vanish.
 Moreover, by \cite{HueAd}, Corollary~8.7
 \[
  H^i(\bar{X}'_{\tau},\F_p) \cong H^i(\bar{C}'_{\et},\F_p)
 \]
 for all $i \in \Z$ and the latter cohomology groups vanish for $i \ge 2$ by \cite{SGA4}, Expos\'e~X, Corollaire~5.2.
 Combining these information we obtain the result.
\end{proof}

\begin{proposition} \label{cd}
 We have $\cd^h_p(X) \le 1$ for $\tau \in \{t,\set\}$.
\end{proposition}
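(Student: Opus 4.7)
The plan is a dévissage from horizontally constructible sheaves down to locally constant constructible sheaves on a dense Zariski-open subspace, so that \cref{coh_dim_loc_const_curves} can be invoked.

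Since the Zariski topology on $X$ is pulled back from the noetherian topological space $|C|$, the adic space $X$ is Zariski-noetherian, so for any horizontally constructible $p$-torsion sheaf $\mathcal{F}$ on $X_{\tau}$ we may pick a dense Zariski-open $j\colon U\hookrightarrow X$ on which $\mathcal{F}|_U$ is locally constant constructible. The closed complement $i\colon Z\hookrightarrow X$ has underlying set $\supp^{-1}\{c_1,\dots,c_m\}$ for finitely many closed points of $C$. Because $k(c_j)/k$ is algebraic, every $k$-valuation on $k(c_j)$ is trivial, so each fibre $\supp^{-1}(c_j)=\{x_j\}$ consists of a single point of type (i); thus $Z=\{x_1,\dots,x_m\}$ is a finite discrete set of closed points of $X$.

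I then analyse the short exact sequence
\[
 0 \to j_!j^*\mathcal{F} \to \mathcal{F} \to i_*i^*\mathcal{F} \to 0
\]
and bound the cohomology of each outer term separately for $n\ge 2$. For $i_*i^*\mathcal{F}$ the cohomology equals $\bigoplus_j H^n(\{x_j\},\, i^*\mathcal{F}|_{x_j})$; each summand vanishes for $n\ge 1$ by \cref{cd_proper_subspace}, because $k(x_j)=k(c_j)$ is purely inseparable over the separably closed field $k$ and is therefore itself separably closed. For $j_!j^*\mathcal{F}$ I use the local cohomology sequence
\[
 \cdots \to H^n_Z(X, j_!j^*\mathcal{F}) \to H^n(X, j_!j^*\mathcal{F}) \to H^n(U, j^*\mathcal{F}) \to H^{n+1}_Z(X, j_!j^*\mathcal{F}) \to \cdots .
\]
The right-hand term vanishes for $n\ge 2$ by \cref{coh_dim_loc_const_curves} applied to the curve $C\setminus\{c_1,\dots,c_m\}$ and its associated adic space $U$ in place of $C$ and $X$; the local cohomology decomposes as $\bigoplus_j H^n_{x_j}(X, j_!j^*\mathcal{F})$, and each summand vanishes for $n\ne 0,1$ by \cref{cohdimsupp} (applied to the $p$-torsion sheaf $j_!j^*\mathcal{F}$). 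Together these give $H^n(X, j_!j^*\mathcal{F})=0$ for $n\ge 2$, and the long exact cohomology sequence of the above short exact sequence then yields $H^n(X,\mathcal{F})=0$ for $n\ge 2$, i.e.\ $\cd^h_p(X)\le 1$.

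The only delicate point is justifying that \cref{coh_dim_loc_const_curves} genuinely applies to the open subspace $U$ rather than only to the originally fixed $X$. This is immediate from the proof of that lemma, whose only ingredients are cohomological purity (\cref{purity}) and the vanishing of étale cohomology of smooth projective curves in degrees $\ge 2$, both of which are available for any smooth curve; in particular they apply to a smooth compactification of $C\setminus\{c_1,\dots,c_m\}$. The remainder is a formal dévissage following the pattern of \cref{cd_constructible}, but with the sharper local input from \cref{cohdimsupp} at the closed points of type (i) replacing the weaker estimates used there.
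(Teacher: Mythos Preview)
Your proof is correct and rests on the same two ingredients as the paper's (\cref{cohdimsupp} and \cref{coh_dim_loc_const_curves}), but you take an unnecessary detour. The paper applies the excision sequence directly to~$\mathcal{F}$:
\[
 \cdots \to H^n_Z(X_\tau,\mathcal{F}) \to H^n(X_\tau,\mathcal{F}) \to H^n(U_\tau,\mathcal{F}|_U) \to \cdots,
\]
and then \cref{cohdimsupp} kills the left term and \cref{coh_dim_loc_const_curves} (applied, as you correctly note, to the open curve underlying~$U$) kills the right term for $n\ge 2$. Your initial d\'evissage via $0\to j_!j^*\mathcal{F}\to\mathcal{F}\to i_*i^*\mathcal{F}\to 0$ is redundant: once you apply excision to $j_!j^*\mathcal{F}$ and separately handle $i_*i^*\mathcal{F}$, you have effectively reassembled the excision sequence for~$\mathcal{F}$ itself. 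Nothing is wrong, but the argument can be shortened to three lines.
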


\begin{proof}
 Take a  horizontally constructible $p$-torsion sheaf~$\mathcal{F}$ on~$X_{\tau}$.
 Let $U \subseteq X$ be a Zariski-open subspace such that $\mathcal{F}|_U$ is locally constant.
 Denote by~$Z$ the complement of~$U$ and consider the corresponding excision sequence
 \[
  \ldots \longrightarrow H^i_Z(X_{\tau},\mathcal{F}) \longrightarrow H^i(X_{\tau},\mathcal{F}) \longrightarrow H^i(U_{\tau},\mathcal{F}) \longrightarrow \ldots
 \]
 By \cref{cohdimsupp}, the cohomology groups with support in~$Z$ vanish for $i \ge 2$.
 Moreover, by \cref{coh_dim_loc_const_curves}, we have $H^i(U_{\tau},\mathcal{F}) = 0$ for $i \ge 2$.
 This proves that $\cd^h_p(X) \le 1$.
\end{proof}

\begin{proposition} \label{cd_compact_support}
 For $\tau \in \{t,\set,\et\}$ and any horizontally constructible $p$-torsion sheaf~$\mathcal{F}$ on~$X_{\tau}$ we have
 \[
  H^i_c(X,\mathcal{F}) = 0
 \]
 for any $i\ge 2$.
\end{proposition}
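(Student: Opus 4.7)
The plan is to reduce to the horizontal cohomological dimension bound of \cref{cd} by passing to a proper smooth compactification of the curve. Let $\bar{C}$ be a smooth proper curve containing $C$ as an open dense subcurve, set $\bar{X} = \Spa(\bar{C},k)$, and let $j : X \hookrightarrow \bar{X}$ be the induced open immersion, with closed complement $Z = \bar{X} \setminus X$. Because $k$ is separably closed, $Z$ consists of finitely many closed points of $\bar{X}$, one for each boundary point of $\bar{C}\setminus C$, each with residue field~$k$. By definition of compactly supported cohomology, $H^i_c(X,\mathcal{F}) \cong H^i(\bar{X}, j_!\mathcal{F})$, so the task reduces to bounding the latter.

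The first step is to check that $j_!\mathcal{F}$ is horizontally constructible on $\bar{X}$. A Zariski-constructible stratification of $X$ on which $\mathcal{F}$ is locally constant of finite type, together with the Zariski-closed stratum $Z$ (on which $j_!\mathcal{F}$ vanishes), yields such a stratification of $\bar{X}$ for $j_!\mathcal{F}$. For $\tau \in \{t,\set\}$ the conclusion then follows immediately: \cref{cd} applied to the curve $\bar{C}$ gives $\cd^h_p(\bar{X}) \le 1$, so $H^i(\bar{X}, j_!\mathcal{F}) = 0$ for $i \ge 2$.

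The case $\tau = \et$ is where I expect the main obstacle, since \cref{cd} is not available in that topology (and indeed the horizontal $p$-cohomological dimension of open curves in the étale topology can exceed~$1$ by Artin-Schreier). To handle this case I would invoke the short exact sequence
\[
 0 \to j_!\mathcal{F} \to j_*\mathcal{F} \to i_*i^*j_*\mathcal{F} \to 0
\]
associated with $i : Z \hookrightarrow \bar{X}$. Each point of $Z$ is strictly henselian (trivial valuation, separably closed residue field), so $H^q_\et(Z,\cdot) = 0$ for $q \ge 1$, and only a bound on $H^i_\et(\bar{X}, j_*\mathcal{F})$ remains to be established. This I would obtain via the Leray spectral sequence for $j$, combined with the comparison between the étale cohomology of the proper discretely ringed adic space $\bar{X}$ and the étale cohomology of the proper smooth curve $\bar{C}$ used in the proof of \cref{coh_dim_loc_const_curves} (cf. \cite{HueAd}, Corollary~8.7); there the required vanishing in degrees $\ge 2$ is supplied by Artin-Schreier together with \cite{SGA4}, Expos\'ee~X, Corollaire~5.2.
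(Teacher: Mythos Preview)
Your argument is correct, but it takes a noticeably longer route than the paper's. The paper observes that the comparison you yourself invoke at the end --- \cite{HueAd}, Corollary~8.7, i.e.\ $H^i(\bar{X}_\tau,\mathcal{G}) \cong H^i(\bar{C}_{\et},c_*\mathcal{G})$ via the center map $c:\bar{X}\to\bar{C}$ --- holds for \emph{all three} topologies and for \emph{any} sheaf~$\mathcal{G}$ on~$\bar{X}_\tau$. Applying it directly to $\mathcal{G}=j_!\mathcal{F}$ reduces the proposition, uniformly in~$\tau$, to the fact that the \'etale $p$-cohomological dimension of the scheme~$\bar{C}$ is at most~$1$ (\cite{SGA4}, Expos\'e~X, Corollaire~5.2). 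No case split, no smoothness of~$\bar{C}$, no appeal to \cref{cd} (and hence no purity input) are needed.

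Your path, by contrast, splits into two cases that both collapse to this single line. For $\tau\in\{t,\set\}$ you go through \cref{cd}, whose proof chain uses smooth compactifications and purity; this is why you need $\bar{C}$ smooth, an assumption the paper does not make. For $\tau=\et$ you pass through the exact sequence $0\to j_!\mathcal{F}\to j_*\mathcal{F}\to i_*i^*j_*\mathcal{F}\to 0$ and then bound $H^i(\bar{X}_{\et},j_*\mathcal{F})$ via the same comparison with $\bar{C}_{\et}$ --- which you could have applied to $j_!\mathcal{F}$ outright. (The Leray spectral sequence for~$j$ plays no actual role in that step; the comparison alone gives the vanishing.) So both branches of your argument are sound but redundant: the detour buys nothing, while the paper's direct approach is shorter, uniform, and avoids the smoothness hypothesis.
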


\begin{proof}
 Let $\bar{C}$ be a compactification of~$C$ and set $\bar{X} = \Spa(\bar{C},k)$.
 Denote by $j$ the inclusion $X \hookrightarrow \bar{X}$.
 Writing $c: \bar{X} \to \bar{C}$ for the center map and using \cite{HueAd}, Corollary~8.7, we obtain
 \[
  H^i_c(X_{\tau},\mathcal{F}) = H^i(\bar{X}_{\tau},j_!\mathcal{F}) = H^i(\bar{C}_{\et},c_*j_!\mathcal{F}),
 \]
 which vanishes for $i \ge 2$ since the \'{e}tale $p$-cohomological dimension of $\bar{C}$ is one by \cite{SGA4}, Expos\'e~X, Corollaire~5.2.
\end{proof}

Let~$A$ be a noetherian ring and denote by~$A$ also the constant sheaf with stalks~$A$ on the~$\tau$-site of an adic space~$X$.
Remember that a sheaf~$\mathcal{F}$ of $A$-modules on~$X_{\tau}$ is called \emph{pseudocoherent} if locally on~$X_{\tau}$ there is an exact sequence of the form
\[
 A^m \to A^n \to \mathcal{F} \to 0
\]
with nonnegative integers~$m$ and~$n$.

\begin{lemma} \label{pseudocoherent}
 Let~$\mathcal{F}$ and~$\mathcal{G}$ be sheaves of $A$-modules with $\mathcal{F}$ pseudocoherent.
 Then for any geometric point $\bar{x}$ of~$X_{\tau}$ and any $n \ge 0$ we have
 \[
  \underline{\Ext}_A^n(\mathcal{F},\mathcal{G})_{\bar{x}} = \Ext_A^n(\mathcal{F}_{\bar{x}},\mathcal{G}_{\bar{x}}).
 \]
\end{lemma}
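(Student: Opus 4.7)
The assertion is local in nature, so I would first reduce to a situation where $\mathcal{F}$ has a resolution by finite free sheaves of $A$-modules defined on a $\tau$-neighborhood of $\bar{x}$. The plan is to compute $\underline{\Ext}^n$ using such a resolution on the sheaf side, use that stalks commute with finite direct sums and with cohomology of complexes, and then recognize the result as $\Ext^n$ of the stalks.

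First I would note that $\underline{\Ext}^n_A(\mathcal{F},\mathcal{G})$ is the sheafification of $U \mapsto \Ext^n_A(\mathcal{F}|_U,\mathcal{G}|_U)$, so its stalk at $\bar{x}$ only depends on a cofinal system of $\tau$-neighborhoods of $\bar{x}$. Since $\mathcal{F}$ is pseudocoherent, after replacing $X$ by a $\tau$-neighborhood of $\bar{x}$ we may assume there is a global exact sequence $A^m \to A^n \to \mathcal{F} \to 0$. Because $A$ is noetherian, the kernel of $A^n \to \mathcal{F}$ is finitely generated (as a subsheaf of $A^n$ it is locally so), hence finitely presented after further shrinking. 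Iterating, I would construct, possibly after passing to successively smaller $\tau$-neighborhoods of $\bar{x}$, a resolution $P_\bullet \to \mathcal{F} \to 0$ with each $P_i = A^{r_i}$ free of finite rank.

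Next, since each $P_i$ is a finite free sheaf of $A$-modules, $\underline{\Hom}_A(P_i,-)$ is exact and $\underline{\Hom}_A(P_i,\mathcal{G}) \cong \mathcal{G}^{r_i}$ naturally. Consequently $\underline{\Ext}^n_A(\mathcal{F},\mathcal{G})$ is the $n$-th cohomology sheaf of the complex $\underline{\Hom}_A(P_\bullet,\mathcal{G})$. Passing to stalks at $\bar{x}$ commutes with finite direct sums, so
\[
\underline{\Hom}_A(P_i,\mathcal{G})_{\bar{x}} \;\cong\; \mathcal{G}_{\bar{x}}^{r_i} \;\cong\; \Hom_A(P_{i,\bar{x}},\mathcal{G}_{\bar{x}}).
\]
Since the stalk functor is exact, it commutes with taking cohomology of the complex $\underline{\Hom}_A(P_\bullet,\mathcal{G})$, giving $\underline{\Ext}^n_A(\mathcal{F},\mathcal{G})_{\bar{x}} \cong H^n(\Hom_A(P_{\bullet,\bar{x}},\mathcal{G}_{\bar{x}}))$.

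Finally, exactness of stalks applied to $P_\bullet \to \mathcal{F} \to 0$ shows that $P_{\bullet,\bar{x}}$ is a free resolution of $\mathcal{F}_{\bar{x}}$ by $A$-modules, so the right-hand side is exactly $\Ext^n_A(\mathcal{F}_{\bar{x}},\mathcal{G}_{\bar{x}})$, yielding the claim. The main subtle point is the construction of a finite-free resolution locally around $\bar{x}$: this is where pseudocoherence plus noetherianity of $A$ is used, and it is the step that would need the most care, since at each stage of the construction one might have to shrink the $\tau$-neighborhood, and one should verify that such a resolution exists on a cofinal system of neighborhoods of~$\bar{x}$.
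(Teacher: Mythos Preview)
Your argument is correct and takes a genuinely different route from the paper's. The paper computes $\underline{\Ext}^n$ via an injective resolution of~$\mathcal{G}$; its key step is to show that the stalk of an injective sheaf of $A$-modules is an injective $A$-module, which is where noetherianity enters (via Baer's criterion: an ideal $I\subseteq A$ is finitely generated, so a map $I\to\mathcal{H}_{\bar x}$ spreads out to a $\tau$-neighborhood, and one then uses that $j^*$ preserves injectives to extend it to~$A$). You instead resolve $\mathcal{F}$ locally near~$\bar x$ by finite free constant sheaves and compute $\underline{\Ext}$ from that side, using noetherianity of~$A$ to continue the resolution. Your approach sidesteps the stalks-of-injectives lemma entirely and is arguably more elementary; the paper's approach is a bit slicker in that it does not require the balancing of $\underline{\Ext}$ against a resolution in the first variable. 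One small remark: your caution at the end about having to shrink the neighborhood at each stage is in fact unnecessary. Once you pass to a \emph{connected} $\tau$-neighborhood on which a presentation $A^m\to A^n\to\mathcal{F}\to 0$ exists, the map $A^m\to A^n$ is given by a single matrix over the ring~$A$, its sheaf kernel is the constant sheaf on a finitely generated $A$-module (by noetherianity), and you can iterate on the same neighborhood to get a full free resolution there.
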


\begin{proof}
 We have
 \[
  \underline{\Hom}_A(\mathcal{F},\mathcal{G})_{\bar{x}} = \Hom_A(\mathcal{F}_{\bar{x}},\mathcal{G}_{\bar{x}})
 \]
 if~$\mathcal{F}$ is a finite free $A$-module.
 Using the five lemma we extend this assertion to pseudocoherent sheaves.
 The lemma follows from this by using the above identification for an injective resolution of~$\mathcal{G}$ once we have shown that the stalks of an injective sheaf~$\mathcal{H}$ of $A$-modules are injective $A$-modules.
 In order to see this it is enough to consider an $A$-module homomorphism
 \[
  I \to \mathcal{H}_{\bar{x}}
 \]
 from an ideal~$I$ of~$A$ to a stalk of~$\mathcal{H}$ and show that it prolongs to~$A$.
 As~$I$ is finitely generated ($A$ being noetherian), it comes from an $A$-homomorphism $I \to \mathcal{H}(U)$ for a neighborhood~$j:U \to X$ of~$\bar{x}$.
 This induces a homomorphism from the constant sheaf~$I$ on~$U$ to~$j^*\mathcal{H}$.
 But~$j^*$ has an exact left adjoint (namely~$j_!$) and thus takes injectives to injectives.
 Hence, we can prolong the homomorphism to a homomorphism $A \to j^*\mathcal{H}$.
 In particular, this prolongs our inicial homomorphism $I \to \mathcal{H}_{\bar{x}}$ to~$A$.
\end{proof}

\begin{proposition} \label{cd_ext}
 For $\tau \in \{t,\set\}$ and any horizontally constructible $\F_p$-module~$\mathcal{F}$ on~$X_{\tau}$ we have
 \[
  \Ext_{\F_p}^i(\mathcal{F},\nu_X(1)) = 0
 \]
 for all $i \ge 2$.
\end{proposition}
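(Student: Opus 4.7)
The plan combines the local-to-global $\Ext$ spectral sequence, a dévissage using the stratification of $\mathcal{F}$, and a covering/iteration argument. First, since $\mathcal{F}$ is horizontally constructible its stalks are finite-dimensional (hence projective) $\F_p$-vector spaces, so \cref{pseudocoherent} gives $\underline{\Ext}^q_{\F_p}(\mathcal{F},\nu_X(1))=0$ for $q\ge 1$; the local-to-global spectral sequence therefore degenerates to $\Ext^n_{\F_p}(\mathcal{F},\nu_X(1))=H^n(X,\underline{\Hom}_{\F_p}(\mathcal{F},\nu_X(1)))$. Next I would let $j:U\hookrightarrow X$ be a Zariski-open subspace on which $\mathcal{F}|_U$ is locally constant and $i:Z\hookrightarrow X$ its closed complement (a finite disjoint union of points $\Spa(c,c)$). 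The short exact sequence $0\to j_!\mathcal{F}|_U\to\mathcal{F}\to i_*\mathcal{F}|_Z\to 0$ yields an $\Ext$ long exact sequence, reducing the problem to the two outer terms. For $i_*\mathcal{F}|_Z$, the adjunction $i_*\dashv Ri^!$ together with \cref{purity_sheaf} gives $\Ext^i_X(i_*\mathcal{F}|_Z,\nu_X(1))\cong\Ext^{i-1}_Z(\mathcal{F}|_Z,\F_p)$, which vanishes for $i\ge 2$ because $k(c)=k$ is separably closed at each component of $Z$, so $Z_\tau$ has trivial positive-degree cohomology. For $j_!\mathcal{F}|_U$, the adjunction $j_!\dashv j^*$ identifies $\Ext^i_X(j_!\mathcal{F}|_U,\nu_X(1))\cong\Ext^i_U(\mathcal{F}|_U,\nu_U(1))$, leaving the task of showing $\Ext^i_U(\mathcal{L},\nu_U(1))=0$ for $i\ge 2$ and $\mathcal{L}$ locally constant constructible on $U$.

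A further application of local-to-global (using that $\mathcal{L}$ is $\tau$-locally free of finite rank $m$) reduces this to $H^i(U,\mathcal{L}^\vee\otimes\nu_U(1))=0$ for $i\ge 2$. Choose a $\tau$-covering $\pi:U'\to U$ trivializing $\mathcal{L}$, with $U'=\Spa(C'',k)$. By the projection formula $\pi_*\pi^*(\mathcal{L}^\vee\otimes\nu_U(1))\cong(\mathcal{L}^\vee\otimes\pi_*\F_p)\otimes\nu_U(1)$, and the augmentation $\pi_*\F_p\to\F_p$ produces a short exact sequence
\[
 0\to\mathcal{L}_1\otimes\nu_U(1)\to\pi_*\pi^*(\mathcal{L}^\vee\otimes\nu_U(1))\to\mathcal{L}^\vee\otimes\nu_U(1)\to 0
\]
with $\mathcal{L}_1=\mathcal{L}^\vee\otimes\ker(\pi_*\F_p\to\F_p)$ again locally constant constructible. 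The cohomology of the middle term equals $H^i(U',\nu_{U'}(1))^m=H^i(C''_{\et},\nu(1))^m$ by \cite{HueAd}, Corollary~8.7, and vanishes for $i\ge 2$ via the Kummer exact sequence $0\to\mathbb{G}_m\overset{p}{\to}\mathbb{G}_m\to\nu(1)\to 0$ combined with the standard vanishing $H^i_{\et}(C'',\mathbb{G}_m)=0$ for $i\ge 2$ on smooth curves over an algebraically closed field. The long exact cohomology sequence then yields the recursive identity $H^i(U,\mathcal{L}^\vee\otimes\nu_U(1))\cong H^{i+1}(U,\mathcal{L}_1\otimes\nu_U(1))$ for $i\ge 2$.

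Iterating this construction produces a sequence $(\mathcal{L}_n)$ of locally constant constructible sheaves with $H^i(U,\mathcal{L}^\vee\otimes\nu_U(1))\cong H^{i+n}(U,\mathcal{L}_n\otimes\nu_U(1))$ for every $n\ge 0$. The iteration terminates because $\cd_p(U_\tau)$ is finite on \emph{all} $p$-torsion sheaves: \cref{cd_constructible} gives the bound $\cd_p\le 3$ on constructible coefficients, and since $U_\tau$ is a noetherian site every $p$-torsion sheaf is a filtered colimit of its constructible subsheaves, with cohomology commuting with such colimits. Thus for $n$ large enough $H^{i+n}(U,\mathcal{L}_n\otimes\nu_U(1))=0$, forcing $H^i(U,\mathcal{L}^\vee\otimes\nu_U(1))=0$ for $i\ge 2$. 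The main technical hurdle is ensuring this iteration really converges: it rests on keeping track (via the projection formula) that the kernel at each step remains of the form $(\text{locally constant constructible})\otimes\nu_U(1)$, and on propagating the finite cohomological-dimension bound from constructible to arbitrary $p$-torsion sheaves.
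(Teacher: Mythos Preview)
Your d\'evissage via $0 \to j_!\mathcal{F}|_U \to \mathcal{F} \to i_*\mathcal{F}|_Z \to 0$ and your treatment of the skyscraper term via the adjunction $i_* \dashv Ri^!$ and \cref{purity_sheaf} are exactly what the paper does. The difference is entirely in how you handle the locally constant part on~$U$. The paper's argument is much shorter there: after the same local-to-global reduction to $H^i(U,\underline{\Hom}(\mathcal{L},\nu_U(1)))$, it simply observes that this $\underline{\Hom}$ sheaf is a horizontal $p$-torsion sheaf and invokes \cref{cd} directly. You instead redo a dimension-shifting argument in the style of \cref{cohdim_loc_const}, which is not wrong in spirit but introduces two genuine gaps.

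First, the step $H^i(U'_\tau,\nu_{U'}(1)) = H^i(C''_{\et},\nu(1))$ is not what \cite{HueAd}, Corollary~8.7 gives you: that comparison requires properness, and $U'$ is an \emph{open} curve. Every use of that corollary in the paper passes through a smooth compactification first. To salvage your claim you would have to compactify $U' \hookrightarrow \bar{U}'$, use purity for $\nu(1)$ (\cref{purity}) to see that $H^i_{\bar{U}' \setminus U'}(\bar{U}',\nu(1)) \cong H^{i-1}(\bar{U}' \setminus U',\F_p) = 0$ for $i \ge 2$, and only then apply the comparison on the compactification. That works, but it is exactly the mechanism behind \cref{coh_dim_loc_const_curves} and hence \cref{cd}, which is why the paper just cites \cref{cd}.

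Second, your termination argument does not go through as written. \Cref{cd_constructible} bounds cohomology only for \emph{constructible} coefficients, but $\mathcal{L}_n \otimes \nu_U(1)$ is not constructible: the stalks of $\nu_U(1)$ at $\tau$-geometric points are quotients of unit groups of strictly henselian local rings and are infinite-dimensional over~$\F_p$. Your claim that ``every $p$-torsion sheaf on $U_\tau$ is a filtered colimit of its constructible subsheaves'' is the standard statement for the \'etale site of a noetherian scheme, but for the tame or strongly \'etale site of a pseudo-adic space it is not something established in the paper and would itself require proof. Without it, the descending induction has no starting point. The cleaner fix is again to go through \cref{cd}, which already gives the needed vanishing in degrees $\ge 2$ without any appeal to a cohomological-dimension bound for non-constructible sheaves.
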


\begin{proof}
 If~$\mathcal{F}$ is locally constant, we consider the local-to-global spectral sequence
 \[
  H^i(X_{\tau},\underline{\Ext}_{\F_p}^j(\mathcal{F},\nu_X(1)) \Rightarrow \Ext_{\F_p}^{i+j}(\mathcal{F},\nu_X(1)).
 \]
 The sheaf~$\mathcal{F}$ is a pseudocoherent $\F_p$-module.
 Hence, we can use \cref{pseudocoherent} to compute the stalks of the $\Ext$-sheaf at a geometric point~$\bar{x}$:
 \[
  \underline{\Ext}_{\F_p}^j(\mathcal{F},\nu_X(1))_{\bar{x}} = \Ext_{\F_p}^j(\mathcal{F}_{\bar{x}},\nu_{X_{\bar{x}}}(1)).
 \]
 This vanishes for $j \ge 1$ as~$\mathcal{F}_{\bar{x}}$ is a projective $\F_p$-module.
 Therefore, the spectral sequence degenerates and we obtain for all $i \ge 0$:
 \[
  \Ext_{\F_p}^i(\mathcal{F},\nu_X(1)) \cong H^i(X_{\tau},\underline{\Hom}_{\F_p}(\mathcal{F},\nu_X(1))).
 \]
 But $\underline{\Hom}_{\F_p}(\mathcal{F},\nu_X(1))$ is a horizontal $p$-torsion sheaf and thus the result follows from \cref{cd}.

 In the general case we find a Zariski-open $j : U \hookrightarrow X$ such that $j^*\mathcal{F}$ is locally constant.
 Denote by $i : Z \hookrightarrow X$ the complement of~$U$ and consider the short exact sequence
 \[
  0 \longrightarrow j_!j^*\mathcal{F} \longrightarrow \mathcal{F} \longrightarrow i_*i^*\mathcal{F} \longrightarrow 0.
 \]
 Applying $\Ext_{\F_p}^n(-,\nu_X(1))$ and using $i^!\nu_X(1) \cong \F_p[-1]$ (see \cref{purity_sheaf}) we obtain a long exact sequence
 \[
  \ldots \to \Ext_{\F_p}^{n-1}(i^*\mathcal{F},\F_p) \to \Ext_{\F_p}^n(\mathcal{F},\nu_X(1)) \to \Ext_{\F_p}^n(j^*\mathcal{F},\nu_U(1)) \to \ldots
 \]
 The left hand group vanishes for $n \ge 2$ as~$Z$ consists of finitely many points of the form $\Spa(k,k)$ and~$k$ is separably closed.
 The right hand group also vanishes for $n \ge 2$ by the case of locally constant sheaves we have seen above.
 Hence,
 \[
  \Ext_{\F_p}^n(\mathcal{F},\nu_X(1)) = 0
 \]
 for $n \ge 2$.
\end{proof}

\section{Finite morphisms}

\begin{proposition} \label{basechange}
 Consider a Cartesian diagram
 \[
  \begin{tikzcd}
   X'	\ar[r,"g'"]	\ar[d,"f'"']	& X	\ar[d,"f"]	\\
   S'	\ar[r,"g"]			& S
  \end{tikzcd}
 \]
 of adic spaces, where~$f$ is finite and~$g$ is a locally closed immersion.
 Then for any sheaf~$\mathcal{F}$ on $X_{\tau}$ the base change morphism
 \[
  \varphi:g^*f_*\mathcal{F} \to f'_*g'^*\mathcal{F}
 \]
 is an isomorphism.
\end{proposition}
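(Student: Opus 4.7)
The plan is to reduce to the two elementary types of locally closed immersions and then verify the base change morphism on stalks using the fibrewise structure of finite morphisms.

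First I would factor $g$ as a composition $S' \hookrightarrow V \hookrightarrow S$ of an open immersion followed by a closed immersion, which is possible for any locally closed immersion. Since base change morphisms are compatible with composition of $g$ (and with the corresponding composition of $g'$ obtained by pulling back), it suffices to establish the statement in each of the two cases separately. The open immersion case is essentially formal: if $g: V \hookrightarrow S$ is open, then $V$ belongs to $S_\tau$, and $g^*$ is just restriction to the overcategory, while $g'^*$ is restriction to the overcategory of $X \times_S V$ in $X_\tau$. Unwinding the definitions, both $g^*f_*\mathcal{F}$ and $f'_*g'^*\mathcal{F}$ assign to an object $U \to V$ of $V_\tau$ the group $\mathcal{F}(X \times_S U)$, and the base change map is the identity on sections.

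For a closed immersion $g: S' \hookrightarrow S$ I would check that $\varphi$ is an isomorphism on stalks at every geometric point $\bar s'$ of $S'$ with respect to $\tau$. Such a point is also a geometric point of $S$ via $g$, and on the left the stalk equals $(f_*\mathcal{F})_{\bar s'}$. The key input is the stalk formula for finite pushforward: since $f$ is finite, the pullback $X \times_S S^\tau_{\bar s'}$ is finite over the $\tau$-henselian adic space $S^\tau_{\bar s'}$, so it decomposes as a finite disjoint union of local pieces, one for each preimage of $\bar s'$, each identified with the $\tau$-localization of $X$ at the corresponding geometric point $\bar x$. Combined with the fact that $\tau$-cohomology commutes with the limits defining localizations (\cite{HueAd}, Proposition~5.3), this yields
\[
 (f_*\mathcal{F})_{\bar s'} = \bigoplus_{\bar x \mapsto \bar s'} \mathcal{F}_{\bar x}.
\]

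On the right, the Cartesian square identifies the geometric points of $X'$ above $\bar s'$ with the geometric points $\bar x$ of $X$ above $\bar s'$ (the base change $g$ being an immersion, no new points appear), and the tame/strongly \'etale localization of $X'$ at such $\bar x'$ agrees with the base change along $g$ of the localization of $X$ at $\bar x$. Applying the same finite pushforward formula to $f'$ gives $(f'_*g'^*\mathcal{F})_{\bar s'} = \bigoplus_{\bar x' \mapsto \bar s'} (g'^*\mathcal{F})_{\bar x'} = \bigoplus_{\bar x \mapsto \bar s'} \mathcal{F}_{\bar x}$, and tracing through the definitions shows that $\varphi$ is precisely the resulting identification. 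The main obstacle is justifying the stalk decomposition for finite pushforward in the tame and strongly \'etale settings: one needs that over a strictly henselian (resp.\ tamely henselian, strongly henselian) Huber pair, a finite adic morphism decomposes as a disjoint union of local pieces which are themselves of the same type. For the strictly henselian case this is standard from Huber's book; for the other two topologies it follows from the structural description of (tamely) henselian Huber pairs recalled in the Preliminaries and the fact that a finite extension of a henselian local ring splits into a product of local rings.
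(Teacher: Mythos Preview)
Your proposal is correct and follows essentially the same approach as the paper: both arguments verify the base change map on stalks by exploiting that a finite morphism over a $\tau$-local adic space decomposes as a finite disjoint union of $\tau$-local pieces. The only cosmetic difference is that you explicitly factor $g$ into an open and a closed immersion and treat them separately, whereas the paper passes directly to the $\tau$-localizations of $S$ and $S'$ at $\bar{s}'$ (which automatically renders $g$ a closed immersion between local spaces) and then argues componentwise on $X$.
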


\begin{proof}
 We can check this on stalks.
 Let $\bar{s}' \to S'$ be a geometric point (with respect to~$\tau$).
 Replacing~$S$ and~$S'$ by their strict localizations at~$g(\bar{s}')$ and $\bar{s}'$, we may assume that~$S$ and~$S'$ are strictly local with closed point $g(\bar{s}')$ and~$\bar{s}'$, respectively.
 This also reduces us to the case where~$g$ is a closed immersion.
 As~$f$ is finite, $X$ is a finite disjoint union of strictly local adic spaces.
 By treating each of them separately we reduce to the case where~$X$ is strictly local.
 Then~$X'$ is also strictly local being a closed subspace of~$X$.
 The stalk of~$\varphi$ at~$\bar{s}'$ is the map
 \[
  \varphi_{\bar{s}'} : \mathcal{F}(X) \to g'^*\mathcal{F}(X') = \mathcal{F}(X^{\tau}_{g(\bar{s}')}) = \mathcal{F}(X),
 \]
 which is obviously an isomorphism.
\end{proof}

The following corollary also holds in greater generality but in order to keep things simple we only prove the following special case:

\begin{corollary} \label{pushforward_constructible}
 Let~$C$ be a curve over a field~$k$ and $C' \to C$ a finite flat morphism which is generically \'etale.
 Set $X = \Spa(C,k)$, $X'= \Spa(C',k)$, and consider the induced morphism $f: X' \to X$.
 Assume that there is a dense Zariski-open~$V$ of~$X$ such that $f|_V$ is a finite $\tau$-morphism.
 Then for every horizontally constructible sheaf~$\mathcal{F}$ on~$X'_{\tau}$ the pushforward $f_*\mathcal{F}$ is horizontally constructible.
\end{corollary}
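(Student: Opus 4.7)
The plan is to produce a finite stratification of $X$ by locally closed Zariski-constructible subsets on which $f_*\mathcal{F}$ becomes locally constant of finite type. Since $|X|$ is noetherian (cf.\ the remark after \cref{classification_irreducible_subspaces}), it suffices to exhibit a dense Zariski-open $V_0 \subseteq X$ on which $f_*\mathcal{F}|_{V_0}$ is locally constant of finite type and to verify horizontal constructibility at each point of its Zariski-closed complement.

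First I would pick a dense Zariski-open $U' \subseteq X'$ such that $\mathcal{F}|_{U'}$ is locally constant of finite type. Because $f$ is induced by the finite (hence closed) morphism of schemes $C' \to C$, and the Zariski topology on the adic spaces is pulled back from the schemes via the support maps, the image $f(X' \setminus U')$ is Zariski-closed in $X$. Set $V_0 := V \cap (X \setminus f(X' \setminus U'))$; this is a dense Zariski-open of $X$ on which $f|_{V_0} : f^{-1}(V_0) \to V_0$ is a finite $\tau$-morphism and on which $\mathcal{F}|_{f^{-1}(V_0)}$ is locally constant of finite type. Pushforward along a finite $\tau$-morphism sends locally constant constructible sheaves to locally constant constructible sheaves (the stalks are given by Galois induction from a finite-index open subgroup of the local $\tau$-fundamental group), so $f_*\mathcal{F}|_{V_0}$ is locally constant of finite type.

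Next I would treat the Zariski-closed complement $Z := X \setminus V_0$. As a proper Zariski-closed subset of the one-dimensional $X$, it is a finite disjoint union of closed points of the form $\Spa(c,c)$, where $c$ runs through finitely many closed points of $C$. Fix such an $x = \Spa(c,c)$ with its closed immersion $i : \{x\} \hookrightarrow X$ and form the Cartesian square obtained by base change along $i$, producing $f' : X'_x \to \{x\}$ and $i' : X'_x \hookrightarrow X'$. By \cref{basechange} we have $i^* f_* \mathcal{F} \cong f'_* i'^* \mathcal{F}$. Finiteness of $f$ forces $X'_x$ to be a finite disjoint union of points $\Spa(c',c')$ with $c'$ running over the preimages of $c$ in $C'$, and on each such point $i'^*\mathcal{F}$ is a finite $\Gal(k(c')^{\sep}|k(c'))$-module since $\mathcal{F}$ is horizontally constructible. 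Its pushforward along $f'$ is obtained by Galois induction and is therefore a finite $\Gal(k(c)^{\sep}|k(c))$-module, i.e., locally constant of finite type on $\{x\}$.

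The main potential obstacle is coordinating the two steps: arranging that $V_0$ simultaneously lies inside $V$ and avoids $f$-images of the locus where $\mathcal{F}$ fails to be locally constant, and then verifying that the point-wise closed immersions $\Spa(c,c) \hookrightarrow X$ really qualify as locally closed immersions of adic spaces so that \cref{basechange} is available. Both hinge on the scheme-theoretic finiteness of $C' \to C$; once this is granted, the pieces assemble mechanically.
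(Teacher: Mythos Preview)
Your proof is correct and follows essentially the same two-step strategy as the paper: first exhibit a dense Zariski-open on which $f_*\mathcal{F}$ is locally constant of finite type, then use \cref{basechange} to handle the finitely many remaining Zariski-closed points. The implementation differs only in details: for the open part the paper invokes the identity $(f_V)_! = (f_V)_*$ together with the general fact that $f_!$ preserves horizontal constructibility (stated at the end of \cref{constructible_sheaves}) and then shrinks, whereas you construct $V_0$ explicitly by removing the image of the locus where $\mathcal{F}$ fails to be locally constant and argue directly that finite $\tau$-pushforward preserves local constancy; for the closed points the paper applies \cref{basechange} to the closed immersion $\bar{x} \hookrightarrow X_{\bar{x}}$ of a geometric point into the $\tau$-localization and reads off finiteness of the stalk, while you apply it to $\{x\} \hookrightarrow X$ and phrase the conclusion in terms of Galois induction. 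One small caveat on both sides: the fibre $X' \times_X \{x\}$ (resp.\ $X' \times_X \bar{x}$) need not literally be a disjoint union of reduced points when $C' \to C$ is ramified over $c$, but since the $\tau$-site is insensitive to nilpotents this does not affect the argument.
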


\begin{proof}
 Let us show first that there is a dense Zariski-open subspace $U$ of~$X$ such that the restriction of~$f_*\mathcal{F}$ to~$U$ is locally constant constructible.
 By assumption there is a dense Zariski-open subspace~$V$ of~$X$ such that
 \[
  f_V : V' := V \times_X X' \to V
 \]
 is a finite $\tau$-morphism.
 Then $(f_V)_!$ is defined and takes horizontally constructible sheaves to horizontally constructible sheaves.
 Since~$f$ is finite, $(f|_V)_!$ coincides with $(f|_V)_*$.
 Therefore, on a Zariski-open~$U$ of~$V$ we have that $(f_U)_*\mathcal{F}|_U$ is locally constant constructible and we are done.

 It remains to show that the stalks of $f_*\mathcal{F}$ at geometric points~$\bar{x}$ of~$X$ with image in the complement of~$U$ are finite.
 These can be taken of the form $\bar{x} = \Spa(\bar{k},\bar{k}) \to X$ for a separable closure~$\bar{k}$ of the base field~$k$.
 Consider the Cartesian diagram
 \[
  \begin{tikzcd}
   X' \times_X \bar{x}	\ar[r,"g'"]	\ar[d,"f'"']	& X' \times_X X_{\bar{x}}	\ar[d,"f"]	\\
   \bar{x}		\ar[r,"g"]			& X_{\bar{x}}.
  \end{tikzcd}
 \]
 It is of the form as in \cref{basechange}.
 Hence, the corresponding base change morphism~$\varphi$ is an isomorphism.
 In our situation $X' \times_X \bar{x}$ is a finite product of spaces of the form $\Spa(\bar{k},\bar{k})$ indexed by the liftings of~$\bar{x}$ to~$X'$.
 Thus~$\varphi$ identifies with the natural map
 \[
  (f_*\mathcal{F})_{\bar{x}} \to \prod_{\bar{x}' \to X'} \mathcal{F}_{\bar{x}'},
 \]
 where the product runs over all geometric points~$\bar{x}'$ lying over~$\bar{x}$.
 In particular, we see that the stalk $(f_*\mathcal{F})_{\bar{x}}$ is finite.
\end{proof}

\section{Poincar\'e duality} \label{duality-sec}

\subsection{Construction of the pairing} \label{construction_pairing}

Let~$f: C \to \Spec k$ be a smooth connected curve over a perfect field~$k$ and set $X = \Spa(C,k)$.
We write $\pi$ for the natural morphism $\Spa(f):X \to \Spa(k,k)$.
Denote by $\bar{f}:\bar{C} \to \Spec k$ the smooth compactification of~$C$ over~$k$.
Then
\[
 \bar{\pi}:=\Spa(\bar{f}):\bar{X}:=\Spa(\bar{C},k) \to \Spa(k,k)
\]
is proper and~$X$ is an open subspace of~$\bar{X}$.
We denote the natural map $X \to \bar{X}$ by~$j$.

From \cite{Milne86}, Theorem 2.4 we have a functorial trace map
\[
 \tr: R\bar{f}_*\nu_{\bar{C}}(1) \to \F_p.
\]
It can be identified with a trace map (see \cite{HueAd}, Corollary~8.3)
\[
\tr: R\bar{\pi}^{\et}_*\nu_{\bar{X}}(1) \to \F_p.
\]
The \'etale, tame, and strongly \'etale sites coincide on $\Spa(k,k)$.
Moreover, $R\bar{\pi}^{\et}_*$, $R\bar{\pi}^{t}_*$, and $R\bar{\pi}^{\set}_*$ are naturally equivalent functors because~$\bar{f}$ is proper (see \cite{HueAd}, Corollary~8.7).
This gives a trace map
\[
 \tr: R\bar{\pi}^{\tau}_*\nu_{\bar{X}}(1) \to \F_p
\]
for any of the topologies $\tau \in\{\set,t,\et\}$.
In the following we just write $R\bar{\pi}_*$ instead of $R\bar{\pi}^{\tau}_*$.

For every horizontally constructible sheaf~$\mathcal{F}$ on~$X_{\tau}$ we consider the map
\begin{align} \label{definition_alpha}
 \alpha_X(\mathcal{F}): R\Hom_X(\mathcal{F},\nu_X(1))[1] &\cong R\Hom_{\bar{X}}(j_!\mathcal{F},\nu_{\bar{X}}(1))[1] \\
 &\to R\Hom_k(R\bar{\pi}_*j_!\mathcal{F},R\bar{\pi}_*\nu_{\bar{X}}(1))[1] \\
 &\overset{\tr}{\to} R\Hom_k(R\pi_!\mathcal{F},\F_p).
\end{align}
in $D^+(\F_p)$.

\subsection{Comparison with Milne's duality}

Let us compare $\alpha_X(\mathcal{F})$ with the duality of logarithmic de Rham-Witt sheaves proved by Milne in \cite{Milne86}.
We need to introduce some notation (see \cite{Milne76}, section~2).
Denote by $(\textit{Pf}/k)_{\et}$ the \'etale site on the category of perfect affine schemes over~$k$.
Moreover, let $\mathcal{G}(p^{\infty})$ denote the category of commutative algebraic perfect group schemes over~$k$ that are annihilated by a power of~$p$.
This is an abelian subcategory of $\Shv(\textit{Pf}/k)_{\et}$ which is equivalent to the category of unipotent quasi-algebraic group schemes studied in \cite{Serre60} (see \cite{Milne76}, Remark~2.3).
Every unipotent group scheme~$G$ fits into a short exact sequence
\[
 0 \to G^0 \to G \to G^{\et} \to 0,
\]
where~$G^0$ is unipotent and connected (it is the connected component of the identity of~$G$) and~$G^{\et}$ is \'etale.
\'{E}tale locally every unipotent connected algebraic group scheme has a composition series whose composition factors are isomorphic to $\GG_a$.
In the language of commutative algebraic perfect group schemes this gives for every connected object \'etale locally a composition series with composition factors isomorphic to $\GG_a^{\textit{Pf}}$, the perfection of~$\GG_a$.
In particular, the group of sections of a connected algebraic perfect group scheme over an algebraically closed field is always infinite.

For a smooth projective scheme~$Y$ of dimension~$d$ over a perfect field~$k$ Milne shows that the Yoneda pairing and the above described trace map induce isomorphisms
\begin{equation} \label{duality_Milne}
 \underline{H}^{\bullet}(Y,\nu(r)) \to \underline{H}^{\bullet}(Y,\nu(d-r))^t[-d]
\end{equation}
for all integers~$r$ (see \cite{Milne86}, Theorem~1.11).
Here, $\underline{H}^{\bullet}(Y,\nu(r))$ is the object of $\mathcal{G}(p^{\infty})$ representing the sheaf on $(\textit{Pf}/k)_{\et}$ associated with the presheaf
\[
 T \to H^i(Y_T,\nu(r))
\]
(see \cite{Milne86}, Lemma~1.2).
Furthermore, $(-)^t$ is the duality on $D^b(\mathcal{G}(p^{\infty})$ defined by
\[
 G^{\bullet t} = R\Hom(G^{\bullet},\Q_p/\Z_p)
\]
(see \cite{Milne86}, Lemma~1.3).

Taking the \'etale and the connected part of the group schemes in \cref{duality_Milne} we obtain isomorphisms
\begin{align*}
 U^i(Y,\nu(r)) &\cong U^{d+1-i}(Y,\nu(d-r))^v,	\\
 D^i(Y,\nu(r)) &\cong D^{d-i}(Y,\nu(d-r))^*,
\end{align*}
where
\begin{align*}
 U^i(Y,\nu(r)) &= \underline{H}^i(Y,\nu(r))^0,			\\
 D^i(Y,\nu(r)) &= \underline{H}^i(Y,\nu(r))/U^i(Y,\nu(r)),	\\
 (-)^* &= \Hom(-,\Q_p/\Z_p),	 \quad \text{and}		\\
 (-)^v &= \Ext^1(-,\Q_p/\Z_p).
\end{align*}
Let us examine the case $r=0$, i.e., $\nu(r) = \F_p$.
For every algebraically closed field~$K$ over~$k$ the cohomology groups $H^i(Y_K,\F_p)$ are finite (\cite{SGA4}, Expos\'e~XIV, Corollaire~1.2).
Hence, the connected part of $\underline{H}^i(Y,\F_p)$ has to be trivial.
We therefore obtain a duality
\[
 \underline{H}^i(Y,\F_p) \cong \underline{H}^{d-i}(Y,\nu(d))^*
\]
of \'etale group schemes.
If~$k$ is algebraically closed, taking global sections gives a duality
\begin{equation} \label{duality_Milne_global_sections}
 H^i(Y,\F_p) \cong H^{d-i}(Y,\nu(d))^*.
\end{equation}

\begin{proposition} \label{alpha=Milne}
 Suppose that~$k$ is algebraically closed and $Y = C$ is a smooth projective curve.
 The above isomorphism (\ref{duality_Milne_global_sections}) coincides with $\alpha_X(\F_p)$.
\end{proposition}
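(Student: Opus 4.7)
The plan is to show that, after specializing $\alpha_X(\F_p)$ to the projective setting, the derived-category construction collapses to the classical cup product pairing composed with Milne's trace, and hence agrees with (\ref{duality_Milne_global_sections}) on the nose.

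First, I would simplify $\alpha_X(\F_p)$. Since $C = \bar C$ is projective we have $\bar X = X$, the map $j$ is the identity, and $R\pi_! = R\pi_*$. Combined with the canonical identification $R\Hom_X(\F_p,\nu_X(1)) = R\Gamma(X,\nu_X(1))$, this reduces $\alpha_X(\F_p)$ to the composition
\[
R\Gamma(X,\nu_X(1))[1] \xrightarrow{R\pi_*} R\Hom_k(R\pi_*\F_p, R\pi_*\nu_X(1))[1] \xrightarrow{\tr} R\Hom_k(R\pi_*\F_p,\F_p)
\]
in $D^+(\F_p)$, where the first arrow is the functoriality of $R\pi_*$ on morphism complexes and the second is induced by the trace of \S\ref{construction_pairing}.

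Next, I would take cohomology. Since $k$ is algebraically closed, $R\pi_*\F_p$ and $R\pi_*\nu_X(1)$ split in $D(\F_p)$ as the direct sums of their shifted cohomology, so $H^{-i}$ of the target is canonically $\Hom_{\F_p}(H^i(X,\F_p),\F_p)$, while $H^{-i}$ of the source is $H^{1-i}(X,\nu_X(1))$. A direct diagram chase then shows that the induced pairing
\[
H^{1-i}(X,\nu_X(1)) \otimes H^i(X,\F_p) \to \F_p
\]
sends $(\omega,\alpha)$ to $\tr(\alpha\cup\omega)$: representing $\omega$ and $\alpha$ as morphisms $\F_p\to\nu_X(1)[1-i]$ and $\F_p\to\F_p[i]$ in $D(X_\tau)$, the first arrow of $\alpha_X(\F_p)$ applies $R\pi_*$ to $\omega$, precomposition with $\alpha$ gives the Yoneda composition $\omega[i]\circ\alpha$, and the identification of Yoneda composition in $\Ext^\bullet(\F_p,-)$ with the cup product finishes the identification.

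Finally, by \cite{HueAd}, Corollary~8.7, the tame, strongly \'etale, and \'etale topoi of the proper adic space $\bar X = X$ are canonically equivalent to the \'etale topos of $C$; this equivalence is compatible with cup products by functoriality, and by construction it transports Milne's trace on $\bar C$ to the trace $\tr$ fixed in \S\ref{construction_pairing}. Under this equivalence the pairing from the previous step is exactly the Yoneda cup product and trace used in \cite{Milne86} to build the isomorphism (\ref{duality_Milne_global_sections}), so the two maps agree. The main obstacle is the bookkeeping in $D^+(\F_p)$ needed to identify the abstract functoriality of $R\pi_*$ with the classical cup product; once this compatibility is verified, Proposition~\ref{alpha=Milne} is a formal consequence of the construction of $\tr$.
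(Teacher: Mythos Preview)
Your proposal is correct and follows essentially the same approach as the paper: both maps are the Yoneda pairing composed with a trace, and the traces coincide by construction. The paper's proof consists of exactly those two sentences, whereas you have unpacked the identification of $\alpha_X(\F_p)$ with the Yoneda pairing in more detail than the paper deems necessary.
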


\begin{proof}
 Both maps are the composite of the Yoneda pairing with a trace map.
 By construction the two trace maps are the same.
\end{proof}

\subsection{Proof of Poincar\'e duality}

In this section we show that the homomorphism $\alpha_X(\mathcal{F})$ defined in \cref{construction_pairing} is a quasi-isomorphism for $\tau \in \{t,\set\}$.
In other words, Poincar\'e duality holds for the tame and the strongly \'etale cohomology.
The structure of the proof is taken from \cite{Geisser10}, section~4.
It is possible to transfer the structure of the proof in loc.\ cit.\ to our situation precisely because we have cohomological purity at our disposal.
Correspondingly it only works for the tame and the strongly \'etale topologies and not for the \'etale one.

\begin{lemma} \label{reduction_alg_closed}
 Suppose that~$\alpha_X(\mathcal{F})$ is a quasi-isomorphism whenever~$k$ is algebraically closed.
 Then~$\alpha_X(\mathcal{F})$ is a quasi-isomorphism for all perfect fields~$k$.
\end{lemma}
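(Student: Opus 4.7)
The plan is to use Galois descent, expressing every datum over $k$ as derived $G$-invariants of its counterpart over $\bar{k}$. Let $\bar{k}$ be an algebraic closure of $k$ and set $G=\Gal(\bar{k}/k)$. Let $g\colon X_{\bar{k}}\to X$ and $h\colon\Spa(\bar{k},\bar{k})\to\Spa(k,k)$ be the base change morphisms, and write $\pi^{\bar{k}}\colon X_{\bar{k}}\to\Spa(\bar{k},\bar{k})$ for the structure morphism over $\bar{k}$. Since $k$ is perfect, $X_{\bar{k}}=\lim_{k'/k}X_{k'}$ where $k'$ ranges over the finite Galois subextensions of $\bar{k}/k$, and since $\Spa(k,k)$ is a point the tame, strongly \'etale, and \'etale sites of $\Spa(k,k)$ all coincide with the Galois site of $k$.

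First I check that every datum entering $\alpha$ is functorial under base change. The sheaf $\nu_X(1)$ is the pullback of $\nu_C(1)$ along the support map, and the Cartier operator commutes with perfect base change, so $g^*\nu_X(1)\cong\nu_{X_{\bar{k}}}(1)$. Proper base change for the compactification $\bar{\pi}$ together with base change for $j_!$ gives $h^*R\pi_!\mathcal{F}\cong R\pi^{\bar{k}}_!\,g^*\mathcal{F}$, and this complex is bounded by \cref{cd_compact_support}. Milne's trace map from \cite{Milne86} is compatible with base change, so $h^*\tr_k=\tr_{\bar{k}}$. Finally, \cref{pseudocoherent} applied to the horizontally constructible, hence pseudocoherent sheaf $\mathcal{F}$ shows that $g^*$ commutes with $R\underline{\Hom}_{\F_p}(\mathcal{F},\nu_X(1))$.

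Next, I rewrite both sides of $\alpha_X(\mathcal{F})$ as $R\Gamma(G,-)$ of their counterparts over $\bar{k}$. Since the $\tau$-topos of $\F_p$-sheaves on $\Spa(k,k)$ is equivalent to that of discrete $\F_p[G]$-modules, one has $R\Gamma(\Spa(k,k)_\tau,-)=R\Gamma(G,h^*(-))$. Combining this with the identifications of the previous paragraph, the right-hand side of $\alpha$ becomes
\[
 R\Hom_k(R\pi_!\mathcal{F},\F_p)\cong R\Gamma\bigl(G,\,R\Hom_{\F_p}(R\pi^{\bar{k}}_!\,g^*\mathcal{F},\F_p)\bigr),
\]
where the internal $R\Hom$ is transparent by boundedness. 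For the left-hand side, combining the local-to-global spectral sequence, the commutation of $R\underline{\Hom}$ with $g^*$, and the Hochschild--Serre factorization $R\Gamma(X_\tau,-)\cong R\Gamma(G,R\Gamma(X_{\bar{k}},g^*(-)))$ yields
\[
 R\Hom_X(\mathcal{F},\nu_X(1))\cong R\Gamma\bigl(G,\,R\Hom_{X_{\bar{k}}}(g^*\mathcal{F},\nu_{X_{\bar{k}}}(1))\bigr).
\]

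By naturality of all constructions in the first step, the map $\alpha_X(\mathcal{F})$ identifies under these isomorphisms with $R\Gamma(G,\alpha_{X_{\bar{k}}}(g^*\mathcal{F}))$. By assumption, $\alpha_{X_{\bar{k}}}(g^*\mathcal{F})$ is a quasi-isomorphism, and since the triangulated functor $R\Gamma(G,-)$ preserves quasi-isomorphisms, so is $\alpha_X(\mathcal{F})$. The main technical hurdle lies in the second step for the left-hand side: the identification of $R\Hom_X(\mathcal{F},\nu_X(1))$ with $R\Gamma(G,R\Hom_{X_{\bar{k}}}(g^*\mathcal{F},\nu_{X_{\bar{k}}}(1)))$ requires both the commutation of $R\underline{\Hom}$ with pullback (handled via pseudocoherence, \cref{pseudocoherent}) and a Leray/Hochschild--Serre argument for $R\Gamma(X_\tau,-)$ along $h$; the right-hand side is more transparent because proper base change directly identifies $R\pi_!\mathcal{F}$ as a bounded complex descending from $\bar{k}$.
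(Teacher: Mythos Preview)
Your strategy is the same as the paper's: both identify $\alpha_X(\mathcal{F})$ with $R\Gamma(G,\alpha_{X_{\bar{k}}}(g^*\mathcal{F}))$ via Galois descent and conclude. The paper is terser, invoking directly the Grothendieck spectral sequence for the composite $\Hom_X(\mathcal{F},-)=(-)^G\circ\mathit{Hom}_{\bar X}(\mathcal{F},-)$ (justified by the fact that $\mathit{Hom}_{\bar X}(\mathcal{F},-)$ sends injectives to flabby $G$-modules, \cite{Milne80}, III, Corollary~2.13c), whereas you factor through $R\underline{\Hom}$ and Hochschild--Serre. Both routes are fine.

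There is one genuine slip, however: the claim ``horizontally constructible, hence pseudocoherent'' is false in general. For a closed point $i:x\hookrightarrow X$ the sheaf $i_*\F_p$ is horizontally constructible but not pseudocoherent: on any connected $\tau$-neighborhood of~$x$ a map $\F_p^m\to\F_p^n$ of constant sheaves is given by a matrix over~$\F_p$, so its cokernel is again constant and can never equal the skyscraper $i_*\F_p$. The paper itself only uses pseudocoherence for \emph{locally constant} sheaves (see the proof of \cref{cd_ext}). This does not break your argument, but you should justify the base-change compatibility of $R\underline{\Hom}$ differently: either by a d\'evissage via the short exact sequence $0\to j_!j^*\mathcal{F}\to\mathcal{F}\to i_*i^*\mathcal{F}\to 0$ (the locally constant part is pseudocoherent, and the skyscraper part is handled directly since the closed points split over~$\bar{k}$), or---as the paper does---by bypassing sheaf $\underline{\Hom}$ altogether and using the composite-functor spectral sequence for global $\Hom$.
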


\begin{proof}
 Let~$\bar{k}$ be an algebraic closure of~$k$ and denote by~$G$ the Galois group of $\bar{k}|k$.
 We have a Grothendieck spectral sequence associated with the derived functors of $\mathit{Hom}_{\bar{X}}(\mathcal{F},-)$ and $(-)^G$ because $\mathit{Hom}_{\bar{X}}(\mathcal{F},-)$ maps injective sheaves to flabby ones (\cite{Milne80}, III, Corollary~2.13c).
 So
 \[
  R\Hom_X(\mathcal{F},\nu(d)) \cong R\Gamma_GR\mathit{Hom}_{\bar{X}}(\mathcal{F},\nu(d)),
 \]
 where~$\Gamma_G$ denotes the derived functor of $(-)^G$.
 Similarly
 \[
  R\Hom_k(Rf_!\mathcal{F},\F_p) \cong R\Gamma_G R\mathit{Hom}_{\bar{k}}(R\bar{f}_!\mathcal{F},\F_p) = R\Gamma_G \mathit{Hom}_{\bar{k}}(H^i_c(X_{\bar{k}},\mathcal{F}),\F_p).
 \]
 Checking that~$\alpha_X(\mathcal{F})$ and~$\alpha_{X_{\bar{k}}}(\mathcal{F})$ are compatible with these identifications, the lemma follows.
\end{proof}

\begin{lemma} \label{duality_etale_morphism}
 Let $C' \to C$ be an \'etale morphism inducing a $\tau$-morphism
 \[
  g: X' := \Spa(C',k) \to X.
 \]
 For every horizontally constructible sheaf $\mathcal{F}'$ of $\F_p$-modules on $X'_{\tau}$ the map $\alpha_{X'}(\mathcal{F}')$ is a quasi-isomorphism if and only if $\alpha_X(g_!\mathcal{F}')$ is.
\end{lemma}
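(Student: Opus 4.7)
The plan is to identify $\alpha_X(g_!\mathcal{F}')$ with $\alpha_{X'}(\mathcal{F}')$ via canonical isomorphisms on both source and target of the pairing. The biconditional in the statement is then immediate.

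\textbf{Identifying the source and target.} For the source, since $g$ is \'etale, pullback of differentials and the compatibility of the Cartier operator with \'etale base change give a canonical isomorphism $g^*\nu_X(1) \cong \nu_{X'}(1)$ (apply \cite{Milne76}, Lemma~1.3 on the underlying schemes and pull back by the support map). Combined with the $(g_!, g^*)$-adjunction, which descends to the derived level because $g_!$ is exact, this yields
\[
 R\Hom_X(g_!\mathcal{F}', \nu_X(1))[1] \cong R\Hom_{X'}(\mathcal{F}', \nu_{X'}(1))[1].
\]
For the target, $\pi' = \pi \circ g$ together with functoriality of extension by zero gives $R\pi_!\, g_! \cong R\pi'_!$, and hence
\[
 R\Hom_k(R\pi_!\, g_!\mathcal{F}', \F_p) \cong R\Hom_k(R\pi'_!\mathcal{F}', \F_p).
\]

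\textbf{Comparing the pairings.} To verify that these two isomorphisms intertwine $\alpha_X(g_!\mathcal{F}')$ and $\alpha_{X'}(\mathcal{F}')$, I would pass to smooth compactifications. Let $\bar g: \bar X' \to \bar X$ be the unique finite morphism extending $g$, and let $j:X \hookrightarrow \bar X$, $j':X' \hookrightarrow \bar X'$ be the open immersions. Then $j \circ g = \bar g \circ j'$, so $j_!\, g_! = \bar g_!\, j'_! = \bar g_*\, j'_!$, the last equality because $\bar g$ is finite. Unwinding the definition of $\alpha$ in \cref{construction_pairing}, the required commutativity reduces, via the adjunction between $\bar g_*$ and its right adjoint, to the compatibility
\[
 \bigl(R\bar\pi_*\, R\bar g_*\nu_{\bar X'}(1) \to R\bar\pi_*\nu_{\bar X}(1) \overset{\tr_{\bar X}}{\longrightarrow} \F_p\bigr) \;=\; \tr_{\bar X'},
\]
where the first arrow is induced by Milne's trace map $R\bar g_*\nu_{\bar X'}(1) \to \nu_{\bar X}(1)$ for the finite morphism $\bar g$. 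This is precisely the functoriality of the trace map recorded in \cite{Milne86}, Theorem~2.4.

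\textbf{Main obstacle.} The delicate point is that although $g$ itself is \'etale, its extension $\bar g$ to the smooth compactifications is in general only finite and can be ramified at the boundary points of $\bar X' \smallsetminus X'$. Consequently the compatibility of trace maps invoked above is Milne's for arbitrary finite morphisms of smooth projective curves, not the easier one for \'etale morphisms. Once this compatibility is in place, the two pairings are canonically identified and the lemma follows.
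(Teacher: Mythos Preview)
Your proof is correct and follows the same strategy as the paper: identify both the source (via the $(g_!,g^*)$-adjunction together with $g^*\nu_X(1)\cong\nu_{X'}(1)$) and the target (via $R\pi_!\,g_!\cong R\pi'_!$) of the two duality maps, and conclude that $\alpha_{X'}(\mathcal{F}')$ and $\alpha_X(g_!\mathcal{F}')$ coincide. The paper's proof stops at that point, leaving the compatibility of the two pairings implicit; your additional step of passing to the compactifications, using $j_!g_! \cong \bar g_*j'_!$, and invoking the functoriality of Milne's trace map for the finite (possibly ramified) extension~$\bar g$ makes this compatibility explicit and is a welcome clarification.
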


\begin{proof}
 Denote by $\pi': X' \to \Spa(k,k)$ the structure morphism of~$X'$.
 Then
 \[
  R\pi_!(g_!\mathcal{F}') = R\pi'_!\mathcal{F}'.
 \]
 Moreover, using that $g^*\nu_X(1) = \nu_{X'}(1)$ and that $g_!$ is left adjoint to~$g^*$ we get a natural identification
 \[
  R\Hom_X(g_!\mathcal{F}',\nu_X(1)) = R\Hom_{X'}(\mathcal{F}',\nu_{X'}(1)).
 \]
 In this way we identify $\alpha_{X'}(\mathcal{F})$ with $\alpha_X(g_!\mathcal{F}')$.
\end{proof}

The following lemma only holds for the tame and the strongly \'etale topologies because it makes use of cohomological purity.

\begin{lemma} \label{duality_proper_closed_subspace}
 Assume that~$k$ is algebraically closed.
 Let $i : Z \hookrightarrow X$ be a proper Zariski-closed subspace (i.e., $Z = \Spa(S,k)$ for some proper closed subscheme~$S$ of~$C$).
 Then for $\tau \in \{t,\set\}$ and every abelian sheaf $\mathcal{F}$ on $Z_{\tau}$ the map $\alpha_X(i_*\mathcal{F})$ is a quasi-isomorphism.
\end{lemma}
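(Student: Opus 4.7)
The plan is to collapse both sides of the map $\alpha_X(i_*\mathcal{F})$ to the same computation carried out on the zero-dimensional space $Z$, by combining the $(i_*,Ri^!)$ adjunction with the purity result already proved.

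First I would exploit that $C$ is one-dimensional and $S$ is a proper closed subscheme: thus $S$ is a finite set of closed points, and since $k$ is algebraically closed, $Z=\Spa(S,k)$ is a finite disjoint union of copies of $\Spa(k,k)$. All functors appearing in $\alpha_X$ commute with finite disjoint unions, so I would reduce immediately to the case $Z=\{x\}$ with $i:x\hookrightarrow X$ a single closed point.

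Next I would simplify each side separately. For the left-hand side, the adjunction $(i_*,Ri^!)$ gives
\[
R\Hom_X(i_*\mathcal{F},\nu_X(1))[1]\;\cong\;R\Hom_x(\mathcal{F},Ri^!\nu_X(1))[1],
\]
and by \cref{purity_sheaf} we have $Ri^!\nu_X(1)\cong\F_p[-1]$, so the left-hand side becomes $R\Hom_x(\mathcal{F},\F_p)$. For the right-hand side, since $i$ is a closed immersion one has $i_!=i_*$, and because $\pi\circ i$ identifies $x$ with $\Spa(k,k)$,
\[
R\pi_!i_*\mathcal{F}\;=\;R(\pi i)_!\mathcal{F}\;=\;\mathcal{F}(x)
\]
placed in degree~$0$. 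Hence the right-hand side is likewise $\Hom_{\F_p}(\mathcal{F}(x),\F_p)=R\Hom_x(\mathcal{F},\F_p)$.

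It then remains to verify that under these two identifications, the map $\alpha_X(i_*\mathcal{F})$ is the canonical one (and in particular a quasi-isomorphism). This is a matter of unwinding the definition of $\alpha_X$ in \cref{construction_pairing}: the composition involves extending by zero to $\bar X$, applying $R\bar\pi_*$, and using the global trace $\tr:R\bar\pi_*\nu_{\bar X}(1)\to\F_p$. I expect the main obstacle to be precisely the compatibility between this global trace and the local purity isomorphism from \cref{purity_sheaf}, which was described explicitly via the class $a\,dt/t\in\nu(1)(\Spec K)/\nu(1)(\Spec A)$. Granting this compatibility — which follows from the corresponding local/global trace compatibility in \cite{Milne86} for the projective curve $\bar C$, applied via the identification of tame, strongly \'etale and \'etale cohomology on the proper $\bar X$ as in \cref{construction_pairing} — the diagram commutes and $\alpha_X(i_*\mathcal{F})$ is the canonical duality between $\mathcal{F}(x)$ and its $\F_p$-dual, hence a quasi-isomorphism.
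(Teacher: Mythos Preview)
Your proposal is correct and follows essentially the same route as the paper: reduce to a single point, identify the left side via the $(i_*,Ri^!)$ adjunction together with \cref{purity_sheaf}, identify the right side using $\pi\circ i=\id$, and then check that $\alpha_X(i_*\mathcal{F})$ becomes the obvious isomorphism under these identifications. The paper isolates the compatibility you flag as the ``main obstacle'' into a separate lemma (\cref{compatibility_purity_trace}), which it proves not by citing \cite{Milne86} directly but by passing through the defining short exact sequence $0\to\nu_X(1)\to\Omega^1_{X/k}\to\Omega^1_{X/k}\to 0$ and reducing to the coherent statement that the trace on $H^1(\bar C,\Omega^1_{\bar C/k})$ sends the fundamental class of a closed point to~$1$ (from \cite{Gro95}); you may want to sharpen your reference accordingly.
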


\begin{proof}
 Without loss of generality we may assume that~$Z$ consists of a single point.
 Then $Z \cong \Spa(k,k)$ and $Z_{\tau}$ is trivial.
 Consider the diagram
 \[
  \begin{tikzcd}
   R\Hom_X(i_*\mathcal{F},\nu_X(1))[1]	\ar[r,"\alpha_X(i_*\mathcal{F})"]	\ar[d,"\cong"]			& \Hom(\Gamma_c(X_{\tau},i_*\mathcal{F}),\F_p)	\ar[d,"\cong"]	\\
   R\Hom_Z(\mathcal{F},i^!\nu_X(1))[1]						                                    	& \Hom(\Gamma_c(Z_{\tau},\mathcal{F}),\F_p)			\\
   R\Hom_Z(\mathcal{F},\F_p)		\ar[r,"\cong"]	\ar[u,"\cong"',"\text{purity}"]					& \Hom(\mathcal{F}(Z),\F_p).           \ar[u,"\cong"]
  \end{tikzcd}
 \]
 In order to show that $\alpha_X(i_*\mathcal{F})$ is an isomorphism, we are left with showing that the diagram commutes.
 In order to do so we need to unravel the definition of the map $\alpha_X(i_*\mathcal{F})$ given in (\ref{duality_Milne}).
 The crucial part for showing that the diagram commutes, which does not come from functoriality or adjunction, is the commutativity of the following diagram:
 \[
  \begin{tikzcd}
   R\Hom_k(R\bar{\pi}_!i_*\mathcal{F},R\bar{\pi}_*\nu_{\bar{X}}(1))[1] \ar[r,"tr"] \ar[d,"="] & R\Hom_k(R\pi_!i_*\mathcal{F},\F_p)  \ar[d,"="] \\
   R\Hom_k(\mathcal{F},R\bar{\pi}_*\nu_{\bar{X}}(1))[1] \ar[r,"tr"]                              & R\Hom_k(\mathcal{F},\F_p)  \ar[d,"cl"]  \\
   R\Hom_k(\mathcal{F},R\bar{\pi}_*i_*i^!\nu_{\bar{X}}(1))[1]     \ar[u,"ad"]  \ar[r,"="]        & R\Hom_k(\mathcal{F},i^!\nu_{\bar{X}}(1))[1].
  \end{tikzcd}
 \]
 The upper horizontal map is part of the definition of $\alpha_X(i_*\mathcal{F})$.
 It naturally identifies with the middle horizontal map as~$i$ is a section to~$\pi$, hence also to~$\bar{\pi}$.
 The lower part of the diagram commutes because of the subsequent \cref{compatibility_purity_trace}.
\end{proof}

\begin{lemma} \label{compatibility_purity_trace}
 Suppose that~$k$ is algebraically closed and $\pi: X \to \Spa(k,k)$ proper.
 Let $\tau \in \{t,\set\}$.
 For every section $i: \Spa(k,k) \to X$ of~$\pi$ consider the diagram
 \[
  \begin{tikzcd}
   i^!\nu_X(1)[1] = R\pi_* i_* i^! \nu_X(1)[1]  \ar[rr,"ad"]    &                                   & R\pi_* \nu_X(1)[1]   \ar[dl,"tr"]    \\
                                                                & \F_p.    \ar[ul,"\sim","cl"']
  \end{tikzcd}
 \]
 Then
 \[
  cl = tr \circ ad.
 \]
\end{lemma}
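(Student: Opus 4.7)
The plan is to reduce the assertion to the classical compatibility between Milne's trace map and his cycle class map on the smooth projective curve underlying $X$. Since $\pi$ is proper and the lemma arises in the setting of \cref{construction_pairing}, we may take $X = \Spa(\bar C, k)$ for a smooth projective curve $\bar C$ over the algebraically closed field $k$, with the section $i$ corresponding to a closed $k$-point $c \in \bar C$. The first observation is that $\pi \circ i = \id$ gives $R\pi_* i_* = \id$ on sheaves over $\Spa(k,k)$; this collapses the left-hand vertex of the triangle and reduces the claim to the statement that the composite
\[
 \F_p \xrightarrow{cl} i^!\nu_X(1)[1] \xrightarrow{ad} R\pi_*\nu_X(1)[1] \xrightarrow{tr} \F_p
\]
is the identity in $D^+(\F_p)$.

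The second step is to transport the whole diagram to classical \'etale cohomology of the scheme $\bar C$. The equivalence of tame and strongly \'etale cohomology with \'etale adic cohomology for proper morphisms (\cite{HueAd}, Corollary~8.7), together with the identification of the \'etale adic cohomology of $\bar X$ with the \'etale cohomology of $\bar C$ via the support map, carries $tr$ to Milne's trace $R\bar f_* \nu_{\bar C}(1) \to \F_p$ by the very definition given in \cref{construction_pairing}. In parallel, the purity isomorphism of \cref{purity_sheaf} was constructed by identifying the adic local cohomology of $\nu(1)$ at $x$ with the scheme-theoretic local cohomology appearing in the proof of Proposition~2.1 of \cite{Milne86}, so $cl$ is carried to Milne's classical cycle class $\F_p \xrightarrow{\sim} H^1_c(\bar C, \nu(1))$.

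Under these identifications the assertion becomes the classical statement that the image of the fundamental class of $c$ under the forget-support map $H^1_c(\bar C, \nu(1)) \to H^1(\bar C, \nu(1))$ is sent to $1 \in \F_p$ by the trace. This is built into Milne's construction in \cite{Milne86}, Theorem~2.4: the trace on the logarithmic de Rham-Witt cohomology of a smooth projective curve is characterized by its values on cycle classes of closed points. The main potential obstacle is bookkeeping: one has to check that the various equivalences of cohomology theories in play (tame versus strongly \'etale versus \'etale, adic versus schematic) are coherently compatible with both the trace and the cycle class. Since our $tr$ and $cl$ were set up precisely to factor through their classical analogues, this amounts to making the identifications explicit rather than proving a genuinely new compatibility.
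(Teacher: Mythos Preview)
Your proposal is correct and follows essentially the same line as the paper: reduce to the classical compatibility that the trace sends the cycle class of a closed point to~$1$. The only difference is in the final citation: the paper unwinds one further layer by passing through the Cartier sequence $0 \to \nu_X(1) \to \Omega^1_{X/k} \xrightarrow{C-1} \Omega^1_{X/k} \to 0$ to the coherent picture on~$C$ and invokes Grothendieck's definition of the trace on $H^1(C,\Omega^1_{C/k})$, whereas you stay at the level of $\nu(1)$ and appeal directly to Milne's Theorem~2.4. Since Milne's trace and cycle class are themselves built from the coherent ones, these amount to the same thing; the paper is simply more explicit about where the normalization ultimately comes from.
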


\begin{proof}
 In order to see this we have to remember that the trace map as well as the Gysin map~$cl$ are defined by the corresponding maps for the coherent sheaf~$\Omega_{X/k}^1$ on~$X$ via the short exact sequence
 \[
  0 \to \nu_X(1) \to \Omega^1_{X/k} \overset{C-1}{\longrightarrow} \Omega^1_{X/k} \to 0.
 \]
 These maps actually originate from maps of coherent sheaves on~$C$ such that the commutativity of the above diagram comes down to the commutativity of
 \[
  \begin{tikzcd}
   H^1_P(C,\Omega^1_{C/k})  \ar[rr] &                                   & H^1(C,\Omega^1_{C/k})   \ar[dl,"tr"]    \\
                                    & k,    \ar[ul,"\sim","cl"']
  \end{tikzcd}
 \]
 where we have written~$P$ for the support of the image of the section~$i$.
 But the trace map is defined to map the fundamental class of a closed point (which is independent of the chosen point) to $1 \in \F_p$ (see \cite{Gro95}, \S 4 and \S 5), whence the commutativity of the diagram.
\end{proof}

\begin{lemma} \label{duality_open_subspace}
 Let $V \to C$ be an open immersion ($V$ nonempty). Consider the induced open immersion
 \[
  j: U: = \Spa(V,k) \to X.
 \]
 Then for $\tau \in \{t,\set\}$ and every horizontally constructible sheaf of $\F_p$-modules on $X_{\tau}$ we have that $\alpha_X(\mathcal{F})$ is an isomorphism if and only if $\alpha_U(j^*\mathcal{F})$ is an isomorphism.
\end{lemma}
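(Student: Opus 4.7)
The plan is to reduce the statement to the two preceding lemmas via the standard excision sequence. Let $i\colon Z\hookrightarrow X$ denote the closed immersion of the complement of $U$ in $X$. Since $V\subset C$ is Zariski-open, $Z=\Spa(C\setminus V,k)$ is a proper Zariski-closed subspace, so \cref{duality_proper_closed_subspace} applies (after reducing to algebraically closed $k$ via \cref{reduction_alg_closed} if necessary). For any horizontally constructible $\F_p$-sheaf $\mathcal{F}$ on $X_\tau$ we have the canonical short exact sequence
\[
 0 \to j_!j^*\mathcal{F} \to \mathcal{F} \to i_*i^*\mathcal{F} \to 0.
\]
Applying both sides of the definition of $\alpha_X$ to this sequence yields a morphism of distinguished triangles in $D^+(\F_p)$ whose vertical arrows are $\alpha_X(j_!j^*\mathcal{F})$, $\alpha_X(\mathcal{F})$, and $\alpha_X(i_*i^*\mathcal{F})$. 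The naturality that makes this a genuine morphism of triangles follows from the construction of $\alpha_X$ in \cref{construction_pairing} as a composite of functorial arrows (adjunctions for $j$ and $\bar\jmath$, derived pushforward under $\bar\pi$) and the trace map, which is itself a natural transformation.

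The third vertical arrow $\alpha_X(i_*i^*\mathcal{F})$ is a quasi-isomorphism by \cref{duality_proper_closed_subspace}. For the first vertical arrow, the open immersion $j\colon U\to X$ is automatically a strongly \'etale (hence tame) morphism of adic spaces, so \cref{duality_etale_morphism} applied with $g=j$ and $\mathcal{F}'=j^*\mathcal{F}$ identifies $\alpha_X(j_!j^*\mathcal{F})$ with $\alpha_U(j^*\mathcal{F})$ via the canonical isomorphisms $R\Hom_X(j_!j^*\mathcal{F},\nu_X(1))\cong R\Hom_U(j^*\mathcal{F},\nu_U(1))$ (adjunction together with $j^*\nu_X(1)=\nu_U(1)$) and $R\pi_!(j_!j^*\mathcal{F})=R(\pi\circ j)_!j^*\mathcal{F}$.

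The assertion now follows from the five lemma applied to the long exact sequences on cohomology obtained from the two triangles: once $\alpha_X(i_*i^*\mathcal{F})$ is known to be a quasi-isomorphism, $\alpha_X(\mathcal{F})$ is a quasi-isomorphism if and only if $\alpha_X(j_!j^*\mathcal{F})$ is, and hence if and only if $\alpha_U(j^*\mathcal{F})$ is.

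The main obstacle is the verification that $\alpha_X$ is natural in $\mathcal{F}$ in the strong sense of producing a morphism of distinguished triangles (rather than merely a family of maps that agree object-by-object). This is essentially formal since every constituent of the construction---the adjunction unit, the identification of $R\Hom$ via $j_!\dashv j^*$ on $\bar X$, derived pushforward $R\bar\pi_*$, and the trace---is a natural transformation, but it needs to be spelled out to conclude that the relevant square commutes at the level of complexes and not only in cohomology.
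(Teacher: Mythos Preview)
Your proof is correct and follows essentially the same route as the paper: apply $\alpha_X$ to the excision sequence $0\to j_!j^*\mathcal{F}\to\mathcal{F}\to i_*i^*\mathcal{F}\to 0$, use \cref{duality_proper_closed_subspace} for the closed part, and identify $\alpha_X(j_!j^*\mathcal{F})$ with $\alpha_U(j^*\mathcal{F})$ via \cref{duality_etale_morphism}. Your extra care about naturality and the remark on reducing to algebraically closed~$k$ (needed for \cref{duality_proper_closed_subspace}) are points the paper leaves implicit.
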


\begin{proof}
 Denote by $i:Z \to X$ the embedding of the (reduced) complement of~$U$.
 The short exact sequence
 \[
  0 \to j_!j^*\mathcal{F} \to \mathcal{F} \to i_*i^*\mathcal{F} \to 0
 \]
 induces the following diagram of distinguished triangles in $D^+(\F_p)$:
 \[
  \begin{tikzcd}
   R\Hom_Z(i_*i^*\mathcal{F},\F_p)[1]		\ar[r]	\ar[d,"\alpha_X(i_*i^*\mathcal{F})"]	& R\Hom_X(\mathcal{F},\nu_X(1))[1]	\ar[r]	\ar[d,"\alpha_X(\mathcal{F})"]	& R\Hom_X(j_!j^*\mathcal{F},\nu_U(1))[1]	\ar[d,"\alpha_X(j_!j^*\mathcal{F})"]	\\
   \Hom(R\Gamma_c(X_{\tau},i_*i^*\mathcal{F}),\F_p)	\ar[r]						& \Hom(R\Gamma_c(X_{\tau},\mathcal{F}),\F_p)	\ar[r]					& \Hom(R\Gamma_c(X_{\tau},j_!j^*\mathcal{F}),\F_p)
  \end{tikzcd}
 \]
 As we have seen in \cref{duality_proper_closed_subspace} that $\alpha_X(i_*i^*\mathcal{F})$ is a quasi-isomorphism, we know that $\alpha_X(\mathcal{F})$ is a quasi-isomorphism if and only if $\alpha_X(j_!j^*\mathcal{F})$ is a quasi-isomorphism.
 But the latter is a quasi-isomorphism if and only if $\alpha_U(j^*\mathcal{F})$ is a quasi-isomorphism by \cref{duality_etale_morphism}.
\end{proof}

\begin{lemma} \label{reduction_projective}
 Let $\tau \in \{t,\set\}$.
 Suppose that $\alpha_X(\mathcal{F})$ is a quasi-isomorphism whenever~$k$ is algebraically closed,~$C$ is projective and~$\mathcal{F}$ is constant.
 Then $\alpha_X(\mathcal{F})$ is a quasi-isomorphism for all smooth curves~$C$ over any perfect field~$k$ and all horizontally constructible sheaves~$\mathcal{F}$.
\end{lemma}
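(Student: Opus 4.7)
The proof proceeds by a sequence of reductions. First, by \cref{reduction_alg_closed}, we may assume that $k$ is algebraically closed.

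Second, we reduce to the case where $\mathcal{F}$ is locally constant constructible on $X_\tau$. Since $X$ is noetherian, there is a Zariski-open $j\colon U \hookrightarrow X$ such that $j^*\mathcal{F}$ is locally constant constructible, with complementary closed immersion $i\colon Z \hookrightarrow X$. From the short exact sequence
\[
 0 \to j_!j^*\mathcal{F} \to \mathcal{F} \to i_*i^*\mathcal{F} \to 0,
\]
the functoriality of $\alpha_X$, \cref{duality_proper_closed_subspace} (which gives that $\alpha_X(i_*i^*\mathcal{F})$ is a quasi-isomorphism) and \cref{duality_etale_morphism} (which identifies $\alpha_X(j_!j^*\mathcal{F})$ with $\alpha_U(j^*\mathcal{F})$), together with the two-out-of-three property for quasi-isomorphisms in the distinguished triangle of cones, reduce us to showing $\alpha_U(j^*\mathcal{F})$ is a quasi-isomorphism. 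Replacing $X$ by $U$, we may henceforth assume that $\mathcal{F}$ is locally constant constructible on $X$.

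The constant case on an arbitrary smooth curve $X = \Spa(C,k)$ follows by the same kind of dévissage: letting $\bar{C}$ be the smooth compactification, $\bar{\jmath}\colon X \hookrightarrow \bar{X}$ the open immersion, and $\bar{\imath}\colon \bar{Z} \hookrightarrow \bar{X}$ the complement, the sequence $0 \to \bar{\jmath}_!\F_p \to \F_p \to \bar{\imath}_*\F_p \to 0$ combined with the hypothesis on the projective curve $\bar{X}$ (giving $\alpha_{\bar{X}}(\F_p)$ quasi-iso), \cref{duality_proper_closed_subspace} (for $\bar{\imath}_*\F_p$) and \cref{duality_etale_morphism} (identifying $\alpha_{\bar{X}}(\bar{\jmath}_!\F_p)$ with $\alpha_X(\F_p)$) shows that $\alpha_X(\F_p)$ is a quasi-isomorphism. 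Now for the general locally constant constructible $\mathcal{F}$, choose a $\tau$-covering $\pi\colon X' \to X$ that trivializes $\mathcal{F}$; this covering can be arranged to have the form $X' = \Spa(C',k)$ for a smooth curve $C'$ étale over $C$ (for $\tau = t$ one uses that locally constant $\F_p$-sheaves on $X_t$ are trivialized by tame covers of $\bar{C}$ ramified only over $\bar{C}\setminus C$, whose restriction to $C$ is étale). The surjective adjunction $\pi_!\pi^*\mathcal{F} \to \mathcal{F}$ has locally constant constructible kernel $\mathcal{G}$ (trivialized by the same $\pi$), and by \cref{duality_etale_morphism} the middle map $\alpha_X(\pi_!\pi^*\mathcal{F})$ is identified with $\alpha_{X'}(\pi^*\mathcal{F})$, which is a quasi-isomorphism by the constant case just handled. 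Taking cones in the induced morphism of distinguished triangles yields $\mathrm{cone}(\alpha_X(\mathcal{F})) \simeq \mathrm{cone}(\alpha_X(\mathcal{G}))[-1]$ in $D^+(\F_p)$.

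Iterating produces locally constant constructible $\mathcal{F} = \mathcal{F}_0, \mathcal{F}_1, \mathcal{F}_2, \ldots$ with $\mathrm{cone}(\alpha_X(\mathcal{F})) \simeq \mathrm{cone}(\alpha_X(\mathcal{F}_n))[-n]$ for every $n \ge 0$. By \cref{cd_ext} and \cref{cd_compact_support}, both $R\Hom_X(\mathcal{F}_n,\nu_X(1))[1]$ and $R\Hom_k(R\pi_!\mathcal{F}_n,\F_p)$ have cohomology concentrated in degrees $\{-1,0\}$, so each cone has cohomology in $\{-2,-1,0\}$; the shift by $[-n]$ moves this range to $\{n-2, n-1, n\}$, which for $n \ge 3$ is disjoint from $\{-2,-1,0\}$, forcing $\mathrm{cone}(\alpha_X(\mathcal{F})) = 0$. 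The main subtlety is the availability of a trivializing $\tau$-covering of the form $\Spa(C',k)$ sketched above; termination of the dévissage is then guaranteed precisely by the bounded cohomological dimension coming from \cref{cd_ext} and \cref{cd_compact_support}, which is where the tame/strongly étale hypothesis enters in an essential way.
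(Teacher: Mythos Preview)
Your argument is correct and follows the same overall strategy as the paper (reduce to $k$ algebraically closed, d\'evissage via a trivializing cover, terminate using the cohomological dimension bounds), but the organization differs in two respects worth noting. First, the paper reduces to the projective case at the outset and then works with the \emph{injection} $\mathcal{F}\hookrightarrow \pi_{X*}\pi_X^*\mathcal{F}$ for the finite flat (not necessarily $\tau$-) map $\pi_X\colon X'\to X$ obtained by compactifying the trivializing cover; this requires the base change result \cref{basechange} and \cref{pushforward_constructible} to control $\pi_{X*}\pi_X^*\mathcal{F}$. You instead reduce to locally constant first and use the \emph{surjection} $\pi_!\pi^*\mathcal{F}\twoheadrightarrow\mathcal{F}$ for a finite $\tau$-cover $\pi$, which lets you invoke \cref{duality_etale_morphism} directly and avoids the auxiliary results on finite morphisms entirely. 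Second, the paper runs the descending induction via the five lemma on long exact sequences (using \cref{cd} and \cref{cd_ext}, with $X$ already projective so that $H^i_c=H^i$), whereas you phrase it as a cone-shifting argument in $D^+(\F_p)$ using \cref{cd_compact_support} and \cref{cd_ext}; both are equivalent. The one place where your write-up is thinner than the paper's is the existence of a trivializing finite $\tau$-cover of the form $\Spa(C',k)\to\Spa(C,k)$ with $C'\to C$ \'etale; the paper also uses this implicitly (cf.\ the proof of \cref{cohdim_loc_const}), so this is not a gap relative to the paper, but it is the step most deserving of an explicit reference or justification.
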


\begin{proof}
 \cref{reduction_alg_closed} reduces us to the case of algebraically closed fields.
 Fix a smooth curve~$C$ and a horizontally constructible sheaf~$\mathcal{F}$.
 The smooth compactification $C \to \bar{C}$ induces a compactification $j: X \to \bar{X}: \Spa(\bar{C},k)$.
 Replacing~$\mathcal{F}$ by $j_!\mathcal{F}$ and using \cref{duality_etale_morphism} we reduce to the case that~$C$ is projective.

 Now choose a Zariski-open immersion
 \[
  j: U:= \Spa(V,k) \to X
 \]
 such that $j^*\mathcal{F}$ is locally constant.
 Then there is a finite $\tau$-morphism $\pi_U:U'= \Spa(V',k) \to U$ such that $j^*\mathcal{F}|_{U'}$ is constant, i.e., it is the constant sheaf $C_{U'}$ associated to some finite $\F_p$-module~$C$.
 The smooth compactification $C'$ of~$V'$ provides a smooth compactification~$X'$ of~$U'$ and we obtain a Cartesian diagram
 \[
  \begin{tikzcd}
   U'	\ar[r,open,"j'"]	\ar[d,"\pi_U"']	& X'	\ar[d,"\pi_X"]	\\
   U	\ar[r,open,"j"]				& X
  \end{tikzcd}
 \]
 whose vertical morphisms are finite flat.
 By assumption $\alpha_{X'}(C_{X'})$ is a quasi-isomorphism.
 Then \cref{duality_open_subspace} implies that $\alpha_{U'}(C_{U'})$ is a quasi-isomorphism.
 Hence, by \cref{duality_etale_morphism} we know that $\alpha_U(\pi_{U*}C_{U'})$ is a quasi-isomorphism.
 In order to proceed we have to relate $\pi_{U*}C_{U'}$ with~$\mathcal{F}$:
 \[
  \pi_{U*}C_{U'} \cong \pi_{U*}j'^*\pi_X^*\mathcal{F} \cong j^*\pi_{X*}\pi_X^*\mathcal{F}.
 \]

 The first identification holds because we chose~$U'$ such that $\mathcal{F}|_{U'} \cong C_{U'}$.
 The second is due to base change for finite morphisms (\cref{basechange}).
 Notice that $\pi_{X*}\pi_X^*\mathcal{F}$ is constructible by \cref{pushforward_constructible}.
 Therefore, we can apply \cref{duality_open_subspace} to $\pi_{X*}\pi_X^*\mathcal{F}$ and the open embedding~$j$ to conclude that $\alpha_X(\pi_{X*}\pi_X^*\mathcal{F})$ is a quasi-isomorphism.

 The adjunction morphism $\mathcal{F} \to \pi_*\pi^*\mathcal{F}$ is injective because~$\pi$ is finite (this can be checked on stalks using \cref{basechange}).
 We define the sheaf~$\mathcal{F}'$ to be its cokernel, i.e., we have an exact sequence
 \[
  0 \to \mathcal{F} \to \pi_*\pi^*\mathcal{F} \to \mathcal{F}' \to 0.
 \]
 It induces a commutative diagram of long exact sequences
 \[
 \begin{tikzcd}[column sep=small]
  \ldots	\ar[r]	& \Ext_{\F_p}^i(\mathcal{F}',\nu_X(1))	\ar[r]	\ar[d,"\alpha_X^i(\mathcal{F}')"]	& \Ext_{\F_p}^i(\pi_*\pi^*\mathcal{F},\nu_X(1))	\ar[r]	\ar[d,"\alpha_X^i(\pi_*\pi^*\mathcal{F})"]	& \Ext_{\F_p}^i(\mathcal{F},\nu_X(1))	\ar[r]	\ar[d,"\alpha_X^i(\mathcal{F})"]	& \ldots	\\
  \ldots	\ar[r]	& H^{1-i}(X_{\tau},\mathcal{F}')^*		\ar[r]						& H^{1-i}(X_{\tau},\pi_*\pi^*\mathcal{F})^*		\ar[r]							& H^{1-i}(X_{\tau},\mathcal{F})^*		\ar[r]					& \ldots
 \end{tikzcd}	
 \]
 For $i \ge 2$ all groups involved are zero by \cref{cd,cd_ext}.
 Hence we can proceed by descending induction and assume that $\alpha_X^j(\mathcal{G})$ is an isomorphism for all $j > i$ and all horizontally constructible sheaves~$\mathcal{G}$.
 (Of course this is a bit of an overkill as we only have to treat the cases $i=0$ and $i=1$ but still makes the exposition shorter.)
 Using that $\alpha_X^i(\pi_*\pi^*\mathcal{F})$ is an isomorphism the five lemma implies that $\alpha_X^i(\mathcal{F})$ is surjective.
 This is true for any horizontally constructible sheaf, in particular for~$\mathcal{F}'$, so $\alpha_X^i(\mathcal{F}')$ is surjective, as well.
 Applying the five lemma once more, we obtain that $\alpha_X^i(\mathcal{F})$ is an isomorphism.
\end{proof}

\begin{theorem}
 Let~$C$ be a smooth curve over a perfect field~$k$ and set $X= \Spa(C,k)$.
 For $\tau \in \{t,\set\}$ and every horizontally constructible sheaf of $\F_p$-modules~$\mathcal{F}$ on~$X_{\tau}$ the homomorphism
 \[
  \alpha_X(\mathcal{F}): R\Hom_X(\mathcal{F},\nu_X(1))[1] \to R\Hom_k(R\pi_!\mathcal{F},\F_p)
 \]
 in $D^+(\F_p)$ is a quasi-isomorphism.
\end{theorem}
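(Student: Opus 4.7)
The strategy is to reduce all the way down to the one case that Milne has already handled, namely $k$ algebraically closed, $C$ smooth projective, and $\mathcal{F} = \F_p$, and then invoke the comparison result \cref{alpha=Milne}.

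First I would apply \cref{reduction_projective}, which has already packaged most of the reduction steps. Thanks to this lemma, it suffices to verify that $\alpha_X(\mathcal{F})$ is a quasi-isomorphism under the following strong hypotheses: $k$ is algebraically closed, $C$ is a smooth projective curve over $k$, and $\mathcal{F}$ is a constant sheaf associated with some finite $\F_p$-module $M$. At this point $\bar{X} = X$, the open immersion $j$ is the identity, and $R\pi_! = R\pi_*$.

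Next, I would reduce from an arbitrary finite constant sheaf $M$ to the case $M = \F_p$ by d\'evissage. Writing any finite $\F_p$-module $M$ as a finite iterated extension of copies of $\F_p$ gives short exact sequences of constant sheaves, and the construction of $\alpha_X$ is additive with respect to direct sums and functorial with respect to morphisms of sheaves. Both the source and target of $\alpha_X$ send short exact sequences to distinguished triangles, so a standard five-lemma argument on the long exact cohomology sequences of $\Ext^\bullet_{\F_p}(-,\nu_X(1))$ and $H^\bullet(X_\tau,-)^\ast$ reduces the assertion to the case $M = \F_p$; the relevant groups vanish in the appropriate degrees by \cref{cd} and \cref{cd_ext}.

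Finally, I would invoke \cref{alpha=Milne}: when $k$ is algebraically closed, $C$ is smooth projective, and $\mathcal{F} = \F_p$, the homomorphism $\alpha_X(\F_p)$ coincides with Milne's duality isomorphism (\ref{duality_Milne_global_sections}) of \cite{Milne86}, which is known to be an isomorphism of \'etale (hence discrete) $\F_p$-vector spaces. Since tame, strongly \'etale, and \'etale cohomology of the proper $\bar{X}$ all agree by \cite{HueAd}, Corollary~8.7, this shows $\alpha_X(\F_p)$ is a quasi-isomorphism. Combining these three steps yields the theorem.

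The main obstacle is really concentrated in \cref{reduction_projective} (and the purity-based \cref{duality_proper_closed_subspace} that feeds it): the crucial place where tame or strongly \'etale topology is essential, rather than \'etale, is in the use of cohomological purity (\cref{purity}) when comparing $\alpha_X(\mathcal{F})$ across the excision triangle $j_!j^\ast\mathcal{F} \to \mathcal{F} \to i_\ast i^\ast\mathcal{F}$, since only in those topologies does $i^!\nu_X(1) \cong \F_p[-1]$ for closed points $i \colon x \hookrightarrow X$. Given the preparatory lemmas, the actual proof of the theorem is then a short verification: reduce via \cref{reduction_projective}, d\'evissage down to $\F_p$, and cite \cref{alpha=Milne}.
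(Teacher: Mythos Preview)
Your proposal is correct and follows essentially the same route as the paper: reduce via \cref{reduction_projective} to the case $k$ algebraically closed, $C$ projective, $\mathcal{F}$ constant, then pass to $\mathcal{F}=\F_p$ and invoke \cref{alpha=Milne} together with Milne's theorem. The only minor remark is that your d\'evissage step is more elaborate than needed: since $\F_p$ is a field, any finite $\F_p$-module is already a direct sum of copies of $\F_p$, so the reduction is immediate by additivity without any five-lemma argument.
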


\begin{proof}
 By \cref{reduction_projective} we may assume that~$k$ is algebraically closed, $C$ is projective and~$\mathcal{F}$ is constant.
 We can even assume $\mathcal{F} = \F_p$.
 In this case $\alpha_X(\F_p)$ coincides with Milne's duality (\cref{alpha=Milne}), which is a quasi-isomorphism by \cite{Milne86}, Theorem~1.4.
\end{proof}

\begin{corollary}
 In the situation of the theorem assume that~$k$ is algebraically closed.
 Then the Yoneda pairing together with the trace map induce a perfect pairing
 \[
  H^i_c(X_{\tau},\mathcal{F}) \times \Ext^{1-i}(\mathcal{F},\nu_X(1)) \longrightarrow \F_p.
 \]
 for all $i \in \Z$.
\end{corollary}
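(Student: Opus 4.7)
The plan is to deduce the pairing and its perfectness formally from the quasi-isomorphism $\alpha_X(\mathcal{F})$ supplied by the theorem. Since $k$ is algebraically closed, the site $\Spa(k,k)_{\tau}$ is trivial and sheaves of $\F_p$-modules there are just $\F_p$-vector spaces; moreover every $\F_p$-vector space is injective, so $R\Hom_k(-,\F_p)$ is termwise $\F_p$-linear duality. Taking the $(-i)$-th cohomology of both sides of the quasi-isomorphism $\alpha_X(\mathcal{F})$ therefore produces a natural isomorphism
$$\Ext^{1-i}(\mathcal{F},\nu_X(1)) \xrightarrow{\ \sim\ } \Hom_{\F_p}\bigl(H^i_c(X_{\tau},\mathcal{F}),\F_p\bigr).$$

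Next I would unwind the construction of $\alpha_X(\mathcal{F})$ recorded in \cref{construction_pairing}: it is the composition of the canonical map $R\Hom_{\bar{X}}(j_!\mathcal{F},\nu_{\bar{X}}(1)) \to R\Hom_k(R\bar{\pi}_* j_!\mathcal{F},R\bar{\pi}_*\nu_{\bar{X}}(1))$ with the trace $R\bar{\pi}_*\nu_{\bar{X}}(1) \to \F_p$. Translated to cohomology this is exactly Yoneda composition followed by the trace, so the displayed isomorphism is the adjoint of the bilinear pairing $H^i_c(X_{\tau},\mathcal{F}) \times \Ext^{1-i}(\mathcal{F},\nu_X(1)) \to \F_p$ described in the statement. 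Hence the pairing is non-degenerate on the $\Ext$ side.

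For perfectness from the other side it suffices to see that both groups are finite-dimensional $\F_p$-vector spaces. By \cref{cd,cd_compact_support,cd_ext} both sides vanish outside $i \in \{0,1\}$, and in those degrees $H^i_c(X_{\tau},\mathcal{F}) \cong H^i(\bar{C}_{\et}, c_*j_!\mathcal{F})$ (as in the proof of \cref{cd_compact_support}) is finite by classical constructibility on the projective curve $\bar{C}$; finiteness of $\Ext^{1-i}(\mathcal{F},\nu_X(1))$ is then automatic from the just-established duality. Double duality over $\F_p$ then upgrades non-degeneracy on one side to perfectness on both.

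The only real work beyond the theorem itself is the compatibility verification in the second paragraph, and this is a formal consequence of the description of Yoneda $\Ext$ as morphisms in the derived category together with the naturality of the trace map; no further geometric input is required.
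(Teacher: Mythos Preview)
Your proposal is correct and is precisely the intended argument: the paper states the corollary without proof, treating it as an immediate consequence of the theorem, and your unpacking (triviality of $\Spa(k,k)_{\tau}$, injectivity of $\F_p$, taking $H^{-i}$ of $\alpha_X(\mathcal{F})$, and finiteness via \cref{cd_compact_support,cd_ext}) is exactly how one makes this explicit.
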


\begin{remark}
 Poincar\'e duality also holds for the \'etale topology, see \cite{Mos99}.
 Actually the same proof as ours would work for the \'etale topology using the purity result obtained in loc.\ cit., \S 2.
 We just have to note that the proof of Poincar\'e duality needs purity only for $\nu(1)$ and not for $\nu(0) = \F_p$.
\end{remark}

\bibliographystyle{alpha}
\bibliography{../citations}

\end{document}